\def\Dom{\operatorname{Dom}}
\def\newspan{\operatorname{span}}
\def\supp{\operatorname{supp}}
\def\supp{\operatorname{supp}}
\def\interior{\operatorname{int}}
\def\Tr{\operatorname{Tr}}
\def\corad{\operatorname{corad}}
\def\C{\mathbb{C}}
\def\R{\mathbb{R}}
\def\N{\mathbb{N}}
\def\Z{\mathbb{Z}}
\def\LL{\mathcal{L}}
\def\OO{\mathcal{O}}
\def\KK{\mathcal{K}}
\def\NN{\mathcal{N}}
\def\BB{\mathcal{B}}
\def\PP{\mathcal{P}}
\def\RR{\mathcal{R}}
\def\SS{\mathcal{S}}
\def\supp{\operatorname{supp}}
\def\rrule{\mathcal{R}}
\renewcommand{\int}{\operatorname{int}}
\renewcommand{\mid}{:}
\newcommand{\phull}{\Omega_{punc}}
\newcommand{\inverse}{^{-1}}
\newtheorem{thm}{Theorem}[section]
\newtheorem{cor}[thm]{Corollary}
\newtheorem{lemma}[thm]{Lemma}
\newtheorem{prop}[thm]{Proposition}
\theoremstyle{definition}
\newtheorem{definition}[thm]{Definition}
\newtheorem{notation}[thm]{Notation}
\theoremstyle{remark}
\newtheorem{remark}[thm]{Remark}
\newtheorem{example}[thm]{Example}
\newtheorem*{Acknowledgements}{Acknowledgements}
\numberwithin{equation}{section}
\tikzstyle{vertex}=[circle,thick]
\tikzstyle{goto}=[->,shorten >=1pt,>=stealth,semithick]
 \title[Fractal spectral triples on Kellendonk's $C^*$-algebra of a substitution tiling]{Fractal spectral triples on Kellendonk's $C^*$-algebra of a substitution tiling}
 \author{Michael Mampusti}
 \address{Michael Mampusti and Michael F. Whittaker \\ School of Mathematics and Applied Statistics  \\ The University of Wollongong\\ NSW  2522\\ AUSTRALIA} 
 \email{mm554@uowmail.edu.au, Mike.Whittaker@glasgow.ac.uk}
 \author{Michael F. Whittaker}
\curraddr{Michael F. Whittaker  \\ School of Mathematics and Statistics \\ University of Glasgow \\ 15 University Gardens \\ Glasgow G12 8QW \\ SCOTLAND}
 \thanks{This research was partially supported by the Australian Research Council (DP130100490).}
\keywords{$C^*$-algebras; fractals; graph iterated function systems; noncommutative geometry; nonperiodic tilings; spectral triples; substitution tilings}
\subjclass[2010]{Primary: {46L55}; Secondary: {47B25, 37D40}}
\begin{document}

\begin{abstract}
We introduce a new class of noncommutative spectral triples on Kellendonk's $C^*$-algebra associated with a nonperiodic substitution tiling. These spectral triples are constructed from fractal trees on tilings, which define a geodesic distance between any two tiles in the tiling. Since fractals typically have infinite Euclidean length, the geodesic distance is defined using Perron-Frobenius theory, and is self-similar with scaling factor given by the Perron-Frobenius eigenvalue. We show that each spectral triple is $\theta$-summable, and respects the hierarchy of the substitution system. To elucidate our results, we construct a fractal tree on the Penrose tiling, and explicitly show how it gives rise to a collection of spectral triples.
\end{abstract}

\maketitle

\section{Introduction}

A tiling of the plane is a covering of $\R^2$ by a collection of compact subsets, called \emph{tiles}, for which two distinct tiles can only meet along their boundaries. The building blocks of a tiling are the \emph{prototiles}: a finite set of tiles with the property that every other tile is a translation of some prototile. A tiling is said to be \emph{nonperiodic} if it lacks any translational periodicity. One method of producing tilings is via a substitution rule; a rule that expands each tile, and breaks it into smaller pieces, each of which is an isometric copy of an original tile. A nonperiodic substitution rule gives rise to a dynamical system, called the \emph{continuous hull}, that consists of all tilings whose local patterns appear in some finite substitution of a prototile. The continuous hull becomes a dynamical system where the homeomorphism is induced by translation. In order to associate a particularly tractable $C^*$-algebra to a nonperiodic tiling, Kellendonk \cite{Kel1} places punctures in each tile, which he then uses to define a discrete subset of the continuous hull, which we refer to as the \emph{discrete hull}.

In this paper, we define spectral triples on Kellendonk's $C^*$-algebra $A_{punc}$ associated to a tiling. The fundamental new ingredients for these spectral triples, are the recently developed fractal dual substitution tilings \cite{FWW}. Suppose $T$ is a nonperiodic substitution tiling with finite prototile set $\PP$. For each prototile $p \in \PP$, a fractal dual tiling defines a fractal tree, in fact infinitely many, on a self-similar tiling $T_p$; a tiling constructed from the substitution rule on $p$. Each of our fractal trees defines a unique fractal path between the punctures of any two tiles in $T_p$. Moreover, each fractal tree on $ T_p $ respects the hierarchy of the substitution rule. Given a fractal tree on $ T_p $, we apply Perron-Frobenius theory to the substitution matrix associated to the edges of the fractal dual tiling, to define a length function on each fractal edge in the fractal tree. This extends to a self-similar length function on the entire fractal tree, with scaling factor given by the Perron-Frobenius eigenvalue $\kappa$. If $ \lambda $ is the scaling factor for the original tiling, the scaling factor $\kappa$ of the fractal tree, is related to the Hausdorff dimension $h$ of the fractal dual tiling by the formula $h = \frac{\ln \kappa}{\ln \lambda}$. The fractal tree is then used to define a length function between any two tiles of $T_p$ using Perron-Frobenius theory. Let $d_{\mathfrak{F}_p}(t,t')$ denote the fractal length between the punctures of two tiles $t$ and $t'$ in $T_p$.

To each substitution tiling with a fractal dual tiling, we construct spectral triples on Kellendonk's $C^*$-algebra $A_{punc}$, which we now outline. For each $p \in \PP$, let $H_p:=\ell^2(T_p \setminus \{p\})$, with canonical basis $\{\delta_t \mid t \in T_p \setminus \{p\}\}$, and define an unbounded multiplication operator $D_p \delta_t:=\ln(d_{\mathfrak{F}_p}(t,p))\delta_t$. We show that $(A_{punc},H_p,D_p)$ is a $\theta$-summable (positive) spectral triple. Let $H:=\oplus_{p \in \PP} H_p$. For each function $\sigma:\PP \to \{-1,1\}$, we define an unbounded multiplication operator $D_{\sigma}:=\oplus_{p \in \PP} \sigma(p)D_p$. Then, $(A_{punc},H,D_{\sigma})$ is also a $\theta$-summable spectral triple. This defines a collection of spectral triples on Kellendonk's algebra $A_{punc}$ that each respect the hierarchy of the substitution rule.

Using operator algebras as the basic framework, Alain Connes developed noncommutative geometry \cite{Con}, and has shown its significance to many fields of mathematics. In particular, one of the overarching themes of noncommutative geometry is to describe a consistent mathematical model for quantum physics. Dynamical systems are particularly well suited to the tools of noncommutative geometry, and provide dynamical invariants in a noncommutative framework. Of particular importance to Connes' program are spectral triples, which typically define a noncommutative Riemannian metric on a $C^*$-algebra. A spectral triple $(A,H,D)$ consists of a $C^*$-algebra $A$ faithfully represented on a separable Hilbert space $H$, and a self-adjoint unbounded operator $D$ on $H$ with compact resolvent, whose commutators with a dense $*$-subalgebra of $A$ are bounded.

The noncommutative topology of tilings has a long history. Alain Connes initiated the study of substitution tilings in a noncommutative framework by giving a detailed description of a $C^*$-algebra associated with the Penrose tiling in his seminal book \cite{Con}. In 1982, Dan Shechtman discovered quasicrystals \cite{She}, a type of material that is neither crystalline nor amorphous. The mathematical theory explaining Shechtman's discovery had already been developed in the context of purely mathematical research; nonperiodic tilings provide an excellent model for quasicrystals.
In an attempt to understand the physics of quasicrystals, Bellissard defined a crossed product $C^*$-algebra by a family of Schr\"odinger operators. Years later, Kellendonk defined a discrete version of the continuous hull and constructed a groupoid $C^*$-algebra associated with a tiling \cite{Kel1, Kel2}. Soon afterwards, Anderson and Putnam \cite{AP} showed that the continuous hull $\Omega$ of a tiling is a Smale space, and used this observation to describe the $K$-theory of the crossed product $C(\Omega) \rtimes \R^2$. More recently, Kellendonk's construction was generalised to tilings with infinite rotational symmetry in \cite{Whi_rot}, and the rotationally equivariant $K$-theory of these algebras was completely worked out in \cite{Star_Kth}. 

Only recently has there been a breakthrough in the noncommutative geometry of tilings. The primary interest in spectral triples on tilings is that the continuous hull of a nonperiodic tiling is not only a topological object, it also has rich geometric structure. The groundbreaking spectral triple for tilings appeared in John Pearson's 2008 thesis \cite{Pearson2008}, and the subsequent joint paper with Bellissard \cite{BP}. These spectral triples were defined on the commutative $C^*$-algebra associated with the hull of a tiling. A few years later, the second author constructed spectral triples on the unstable $C^*$-algebra of a Smale space \cite{Whi_thesis,Whi_ST}, which is strongly Morita equivalent to Kellendonk's algebra. 
However, in the special case of tiling algebras, this spectral triple essentially measured the Euclidean distance between two tilings in the groupoid used to define the $C^*$-algebra, and ignored the substitution system. Since Bellissard and Pearson's seminal result there have been a number of papers on spectral triples of tilings, see for example \cite{JS,JS2,KLS,KS}. The survey article \cite{JKS} explains these constructions and their relationship to one another.

\begin{Acknowledgements}
We thank the anonymous reviewer for suggesting revisions that greatly improved the exposition.
\end{Acknowledgements}

\section{Nonperiodic tilings and their properties} \label{sec:preliminaries}

The tilings in this paper are built from \emph{prototiles}, a finite collection $\PP$ of labelled compact subsets of $\R^2$ that each contain the origin, and are equal to the closure of their interior. We denote the label of a prototile $p \in \PP$ by $\ell(p)$, the support of $p$ by $\supp(p) \subset \R^2$, and the boundary of $ p $ by $ \partial \supp (p) $. In general, given a subset $ X \subset \R^2 $, we write $ \partial X $ for the boundary of $ X $. The labels allow us to have two distinct prototiles with the same support, and we often denote the labels by colours. A \emph{tile} is defined to be any translation of a prototile. So, for any $p \in \PP$ and $x \in \R^2$, the labelled subset $t:=p+x$ is a tile with label $\ell(t) := \ell(p)$, and support $\supp(t) := \supp(p)+x$.

\begin{definition} \label{defn:tiling}
Let $\PP$ be a set of prototiles. A \emph{tiling of the plane} is a countable collection $ T $ of tiles, each of which is a translate of some prototile $ p \in \PP $, such that
\begin{enumerate}[(1)]
\item $\cup_{t \in T}\ \supp(t) = \R^2$; and
\item $\interior(\supp(t)) \cap \interior(\supp(t'))=\varnothing$ whenever $t \neq t'$.
\end{enumerate}
A tiling $T$ is said to be \emph{edge-to-edge} if whenever two tiles intersect, they meet full edge to full edge, or at a common vertex (see \cite[Section 3.2]{FWW} for the definition of an edge in the case that the tiling does not have polygonal prototiles). If tiles in an edge-to-edge tiling $T$ only intersect along at most one edge, then $T$ is said to be \emph{singly edge-to-edge}. A \emph{patch} $P \subset T$ is a finite collection of tiles in $T$ such that the interior of the support of $P$ is connected.
\end{definition}

For $x \in \R^2$ and $r>0$, let $B(x,r)$ denote the ball of radius $r$ centred at $x$.
Given a tiling $T$, we require the following collections of tiles. For $x \in \R^2$ and $r > 0$, let
\[
T \sqcap B(x,r) := \{t \in T : t \subset B(x,r)\}.
\]
Then, $T \sqcap B(x,r)$ is the collection of tiles $t \in T$ whose support is completely contained in $B(x,r)$.

Suppose $T$ is a tiling, and $x \in \R^2$. The \emph{translate} of $T$ by $x$ is $T+x := \{t+x : t \in T\}$.
The \emph{orbit} of $T$ is the set
$\OO(T) := \{T+x : x \in \R^2\}$.
If there exists $x \in \R^2 \setminus \{0\}$ such that $T+x = T$, we say that $T$ is \emph{periodic}. If $T+x=T$ implies that $x=0$, then we say that $T$ is \emph{nonperiodic}.

\begin{definition}\label{tiling metric}
Let $T$ be a tiling. For $T_1,T_2 \in \OO(T)$ the \emph{tiling metric} is given by
\[
d(T_1,T_2) := \inf \{\varepsilon, 1 \mid (T_1-x_1) \sqcap B(0, \varepsilon^{-1}) = (T_2-x_2) \sqcap B(0, \varepsilon^{-1}),\ x_1, x_2 \in \R^2,\ |x_1|, |x_2| < \varepsilon\}.
\]
\end{definition}

Two tilings $T,T'$ are close in the tiling metric if $T$ and $T'$ have the same pattern of tiles on a large ball centred about the origin, up to a small translation.

\begin{definition}
The \emph{continuous hull} of a tiling $ T $, denoted by $ \Omega $, is the completion of $ \OO (T) $ in the tiling metric. If every tiling in $\Omega$ is nonperiodic, then we say that $T$ is \emph{strongly nonperiodic}.
\end{definition}

It is straightforward to check that the limit of a Cauchy sequence of tilings in $\OO(T)$ is a tiling with the same prototile set as $T$, see \cite[Section 2.1]{Kel1} for details.

In this paper, the tilings we investigate are those whose continuous hull is a compact topological space. In \cite{RW}, Radin and Wolff give a characterisation of such tilings. A tiling $T$ has \emph{finite local complexity} if there are only a finite number of two tile patches in $T$, up to translation.
In this paper, we are concerned with tilings built from finite prototile sets, whose tiles meet singly edge-to-edge.
Thus, the tilings we consider here have finite local complexity.

\begin{thm}[{\cite[Lemma 2]{RW}}] \label{thm:compact}
A tiling $T$ has finite local complexity if and only if $\Omega$ is compact.
\end{thm}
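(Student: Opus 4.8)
The plan is to exploit that $\Omega$ is by construction the metric completion of $\OO(T)$, so $\Omega$ is automatically complete, and hence $\Omega$ is compact if and only if it is totally bounded. The theorem thus reduces to showing that $T$ has finite local complexity if and only if $\Omega$ is totally bounded. I will prove the two implications separately, using throughout the two standing consequences of a finite prototile set $\PP$: a uniform upper bound $\rho := \max_{p \in \PP} \diam(\supp(p))$ on the diameter of any tile, and, since each prototile equals the closure of its interior, a uniform positive lower bound $a_0 > 0$ on the area of the support of any tile. As tiles have disjoint interiors, at most $\pi R^2/a_0$ tiles of any tiling in $\Omega$ can be contained in a ball of radius $R$; we write $N(R)$ for this bound.

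\emph{Finite local complexity $\Rightarrow$ compactness.} The first step is a bootstrapping lemma: if there are only finitely many two-tile patches of $T$ up to translation, then for each $R$ there are only finitely many patches of $T$ contained in a ball of radius $R$, up to translation. Indeed such a patch has at most $N(R)$ tiles, and, building it up one tile at a time along a chain in which each new tile shares an edge with a tile placed earlier (a vertex-only adjacency being routed through the finitely many tiles surrounding that vertex), the position of each new tile relative to a neighbour already placed is one of finitely many, by two-tile finite local complexity and the singly edge-to-edge hypothesis; hence there are finitely many translation types, which we fix and normalise so that each contains $0$ in the support of a distinguished tile. The second step: given $\ep > 0$, put $R := \ep^{-1} + 2\rho$. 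Every $S \in \Omega$ has prototile set $\PP$, and every patch of $S$ is a translate of a patch of $T$ (a patch of $S$ near the origin agrees, up to an arbitrarily small translation, with a patch of a suitable translate of $T$, by the construction of $\Omega$ as a completion); moreover $0$ lies in the support of a tile of $S$. Hence $S \sqcap B(0,R) = Q_i + w$ for one of the finitely many normalised types $Q_i$ and some $w$ with $|w| \le \rho$. Covering $\overline{B(0,\rho)}$ by a sufficiently fine finite net and choosing, for each admissible pair (type, net point), a representative tiling in $\Omega$, Definition~\ref{tiling metric} shows that the resulting finite family of tilings is $\ep$-dense in $\Omega$. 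Therefore $\Omega$ is totally bounded, hence compact.

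\emph{Compactness $\Rightarrow$ finite local complexity.} I argue by contraposition. Suppose $T$ has infinitely many two-tile patches up to translation, and pick such patches $P_1, P_2, \dots$ in $T$ that are pairwise translation-inequivalent. For each $n$ choose $x_n \in \R^2$ with $0$ in the support of a tile of $P_n - x_n$; then $T_n := T - x_n \in \OO(T) \subset \Omega$ and $P_n - x_n \subset T_n \sqcap B(0, 3\rho)$. If $\Omega$ were compact it would be sequentially compact, so after passing to a subsequence we may assume $T_{n_k} \to S \in \Omega$. Fix $R > 3\rho + 1$. For $k$ large, $(T_{n_k} - y_k) \sqcap B(0,R) = (S - z_k) \sqcap B(0,R)$ with $|y_k|, |z_k|$ arbitrarily small, so $P_{n_k} - x_{n_k}$ equals, up to a small translation, a two-tile sub-patch of the patch $S \sqcap B(0, R)$. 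But $S \sqcap B(0,R)$ has at most $N(R)$ tiles and hence only finitely many two-tile sub-patches up to translation; so infinitely many of the $P_{n_k}$ are mutually translation-equivalent, contradicting their choice. Thus $\Omega$ is not sequentially compact, and, being metrizable, not compact.

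\emph{Main obstacle.} The routine but delicate points are the bootstrapping from two-tile to bounded-radius finite local complexity (where the singly edge-to-edge property governs how adjacent tiles can meet), and the bookkeeping of the small ``wiggle'' translations allowed by Definition~\ref{tiling metric}: one must check that agreement on $B(0,R)$ only after a translation of size $< \ep$ still pins a near-origin patch to one of finitely many translation types, both when establishing total boundedness and when extracting the contradiction from a convergent subsequence. Completeness of $\Omega$, used at the outset, is immediate from its construction; see \cite[Section 2.1]{Kel1}. This is the argument of \cite[Lemma 2]{RW}.
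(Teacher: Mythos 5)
The paper offers no proof of this statement: it is imported verbatim from Radin--Wolff \cite{RW} (their Lemma 2), so there is nothing internal to compare against. Your argument is correct in outline and is the standard proof. Two remarks on how it sits relative to the cited source and to the paper's conventions. First, Radin and Wolff establish compactness by a direct diagonal/subsequence-extraction argument over central patches of increasing radius (sequential compactness), whereas you use that $\Omega$ is complete by construction and reduce to total boundedness; in a metric space these are equivalent, and your route is arguably the more natural one given that the paper defines $\Omega$ as a completion, at the cost of the bookkeeping with the small ``wiggle'' translations in Definition~\ref{tiling metric}, which you correctly identify and handle. Second, a cosmetic point in your bootstrapping step: with the paper's definition of patch (connected interior), the collection $S \sqcap B(0,R)$ of tiles \emph{contained} in a ball need not be a patch, since tiles near the boundary can be separated from the central mass by tiles that poke out of the ball; the chain argument should therefore be run on the (connected) collection of tiles \emph{meeting} $B(0,R)$, or one should note that the bounded-radius configurations are subcollections of finitely many such patches, which is just as good for the counting. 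Likewise your two-tile notion of finite local complexity only sees edge-adjacent pairs, so the routing of vertex-only contacts through the finitely many tiles around a vertex (finite by the area bound) is genuinely needed, as you say. Neither point is a gap; both are the routine details you flagged, and the overall proof stands.
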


\begin{definition}
Let $\PP$ be a set of prototiles. A \emph{substitution rule} on $\PP$ is a map $\omega$ on $ \PP $, with a scaling factor $\lambda>1$, such that for each $p \in \PP$, $\omega(p)$ is a patch satisfying the following properties:
\begin{enumerate}[(1)]
\item $\cup_{t \in \omega(p)}\ \supp(t) = \lambda \supp(p)$; and
\item $\interior(\supp(t)) \cap \interior(\supp(t')) = \varnothing$ for all $t,t' 
\in \omega(p)$ with $t \neq t'$.
\end{enumerate}
\end{definition}

Suppose $\omega$ is a substitution rule on a set of prototiles $\PP$ with scaling factor $\lambda$. Let $p \in \PP$, $x \in \R^2$, and consider the tile $t=p+x$. We extend $\omega$ to tiles by $\omega(t) := \omega(p) + \lambda x$, and hence, to tilings by $\omega(T) := \cup_{t \in T}\ \omega(t)$. A similar formula holds for a patch $P$. In particular, for each $p \in \PP$, we may apply $\omega$ to the patch $\omega(p)$. We denote $\omega(\omega(p))$ by $\omega^2(p)$, and inductively define $ \omega^n (p) := \omega (\omega^{n-1} (p)) $ for each $ n \in \N $. A \emph{supertile of order $n$} is a translate of the patch $\omega^n(p)$ for some $p \in \PP$.

\begin{example}[The Penrose tiling] \label{eg:penrose}
The version of the Penrose tiling we consider, consists of the four prototiles depicted in Figure \ref{fig:penrose}, and all rotations by multiples of $\pi/5$. This gives a total of forty prototiles. The substitution rule $\omega$ applied to each of the prototiles is also illustrated in Figure \ref{fig:penrose}, and extends to all forty tiles by rotation. The scaling factor for this substitution is the golden ratio, $\lambda= (1+\sqrt{5})/2$. A patch of the Penrose tiling appears in Figure \ref{fig:penrose_patch}.
\begin{figure}[h!]
\centering
\includegraphics[width=0.3\textwidth]{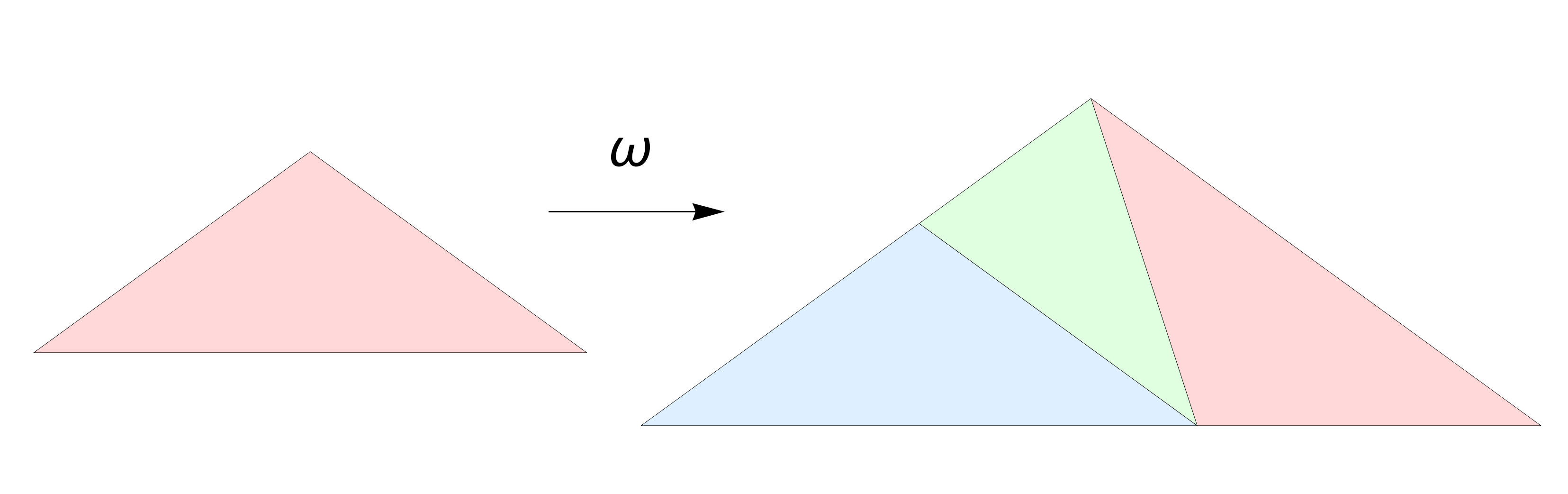}\hfill
\includegraphics[width=0.3\textwidth]{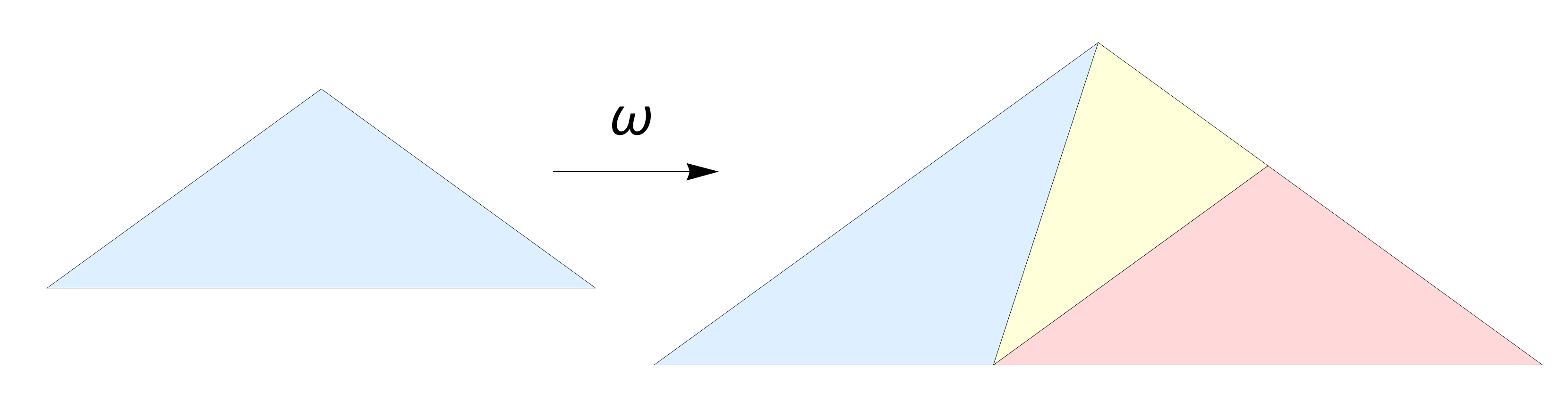}\hfill
\includegraphics[width=0.15\textwidth]{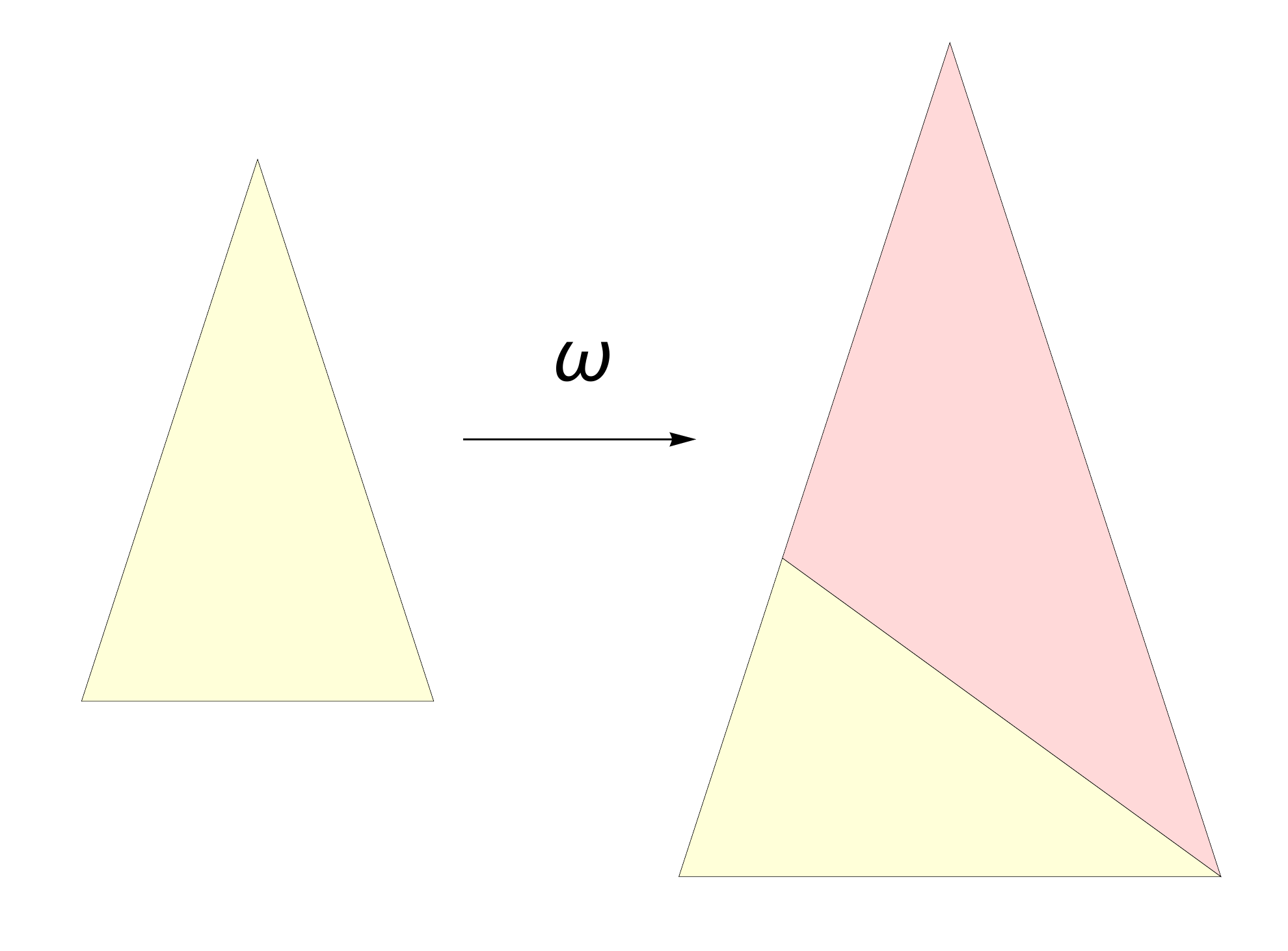}\hfill
\includegraphics[width=0.15\textwidth]{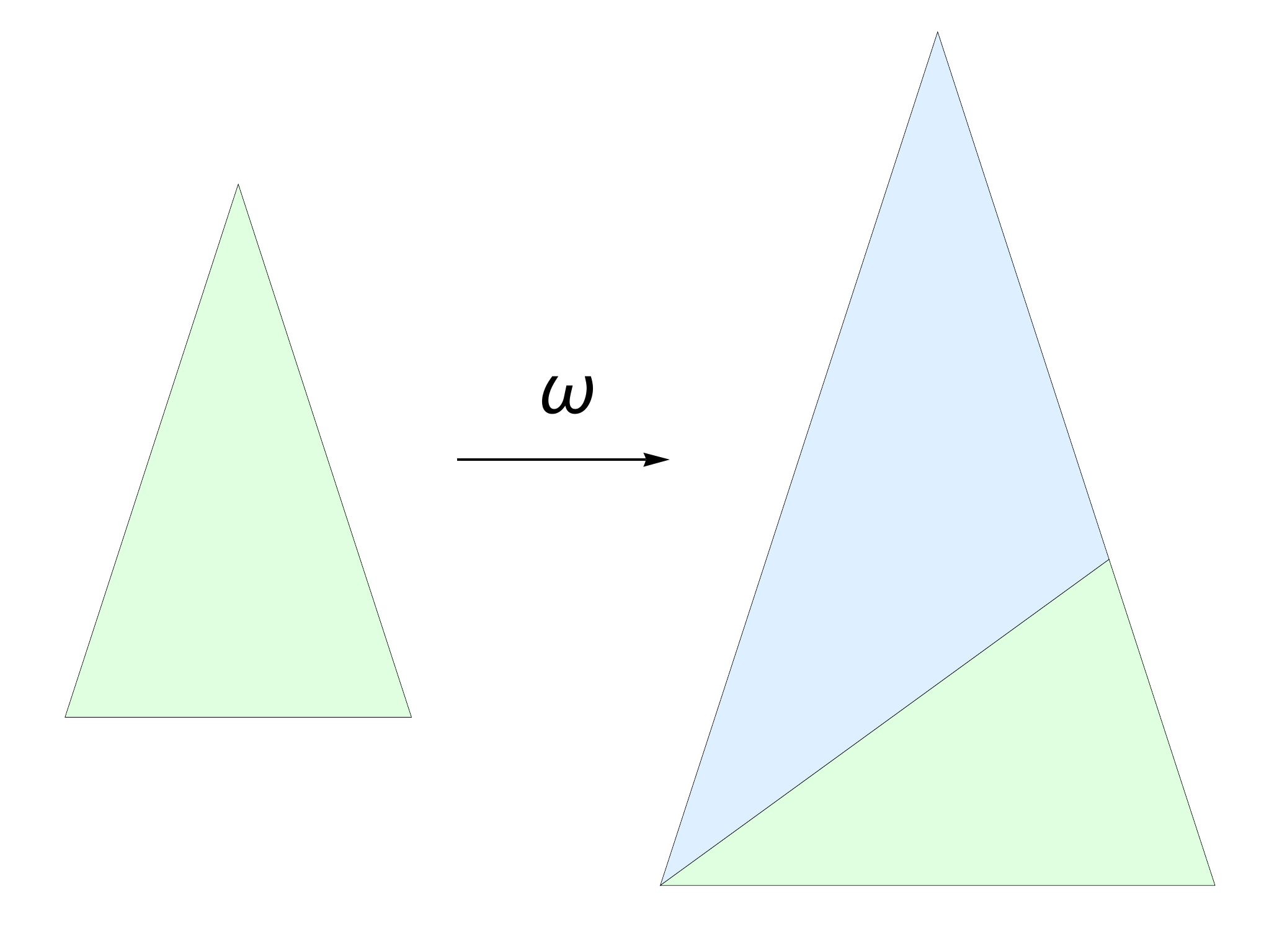}
\caption{The Penrose substitution rule} \label{fig:penrose}
\end{figure}
\begin{figure}[h!]
\centering
\frame{\includegraphics[trim=1100 250 1100 550,clip,width = 0.9\textwidth]{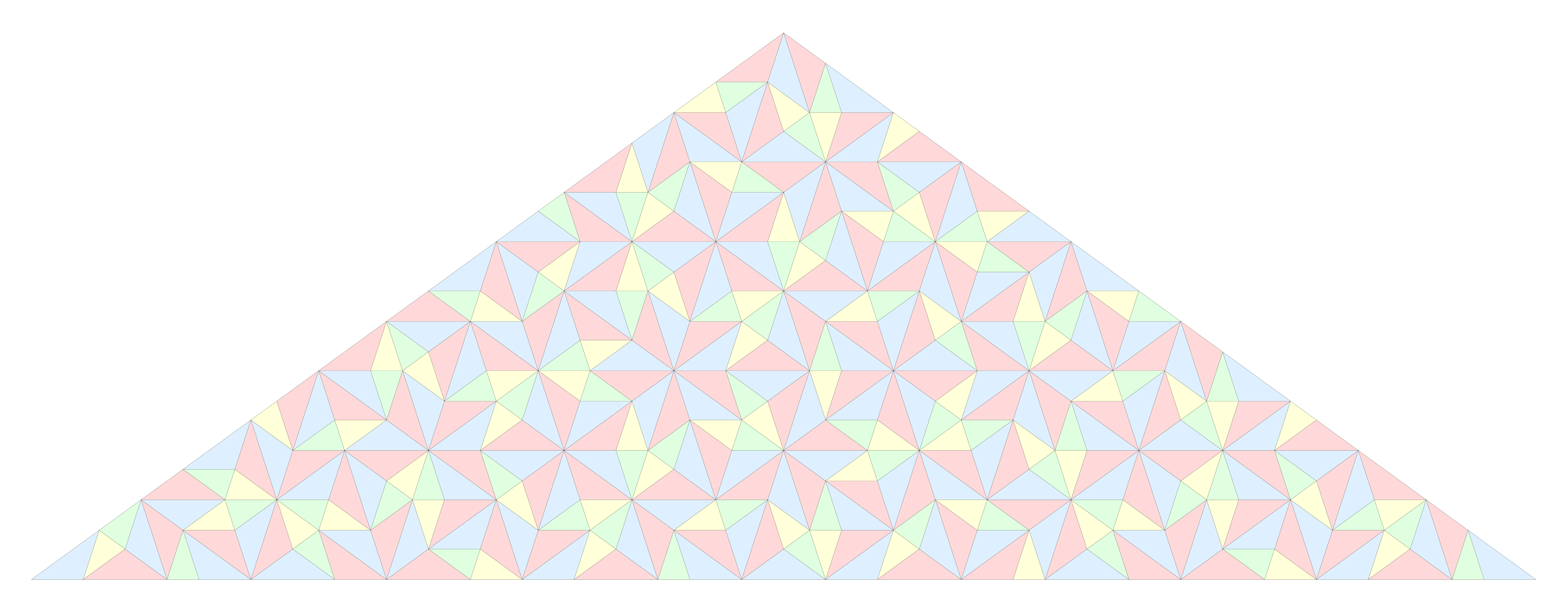}}
\caption{A patch of the Penrose tiling} \label{fig:penrose_patch}
\end{figure}
\end{example}

\begin{notation} \label{notation:substitution}
Let $ \omega $ be a substitution rule on a set of prototiles $ \PP $. For each $p \in \PP$ and $n \in \N$, let $\NN_n(p)$ denote the number of tiles in the patch $\omega^n(p)$. We write $\NN_{\max} := \max\{\NN_1(p) : p \in \PP \}$. We note that $\NN_{\max}$ is well defined, since prototile sets are finite. Moreover, for each $p \in \PP$ and $n \in \N$, it is clear that $\NN_n(p) \leq \NN_{\max}^n$.
\end{notation}

\begin{definition} \label{defn:submatrix}
Let $\omega$ be a substitution rule on a set of prototiles $\PP$. For each pair of prototiles $p,q \in \PP$, define the following set:
\[
D_{pq} := \{ x \in \R^2 : q+x \in \omega(p) \}.
\]
The matrix $D := (D_{pq})$ is called the \emph{digit matrix for $\omega$}. For each pair of prototiles $p,q \in \PP$, let $M_{pq} := |D_{pq}|$, so that $M_{pq}$ is the number of translates of $q$ in the patch $\omega(p)$. The matrix $M := (M_{pq})$ is called the \emph{substitution matrix for $\omega$}.
\end{definition}

\begin{definition}
Let $\omega$ be a substitution rule on a set of prototiles $\PP$, and let $M$ be the substitution matrix for $\omega$. If $ M $ is irreducible, in the sense that there is an $ N \in \N $ sufficiently large such that $ (M^N)_{p,q} > 0 $ for all pairs $ p,q \in \PP $, we say that $ \omega $ is a \emph{primitive substitution rule}.
\end{definition}

Following Sadun \cite[Theorem 1.4]{Lorenzo.book}, we obtain a tiling from a primitive substitution rule $\omega$ on $\PP$ in the following way. Fix a prototile $p \in \PP$. Since $\omega$ is primitive, there exists $n \in \N$ sufficiently large such that there exists a translate of $p$, say $t$, in the patch $\omega^n(p)$ whose support is contained in the interior of the support of $ \omega^n (p) $. Thus, there is a fixed point in $t$ with respect to the map from $\lambda^n p$ to $t \in \omega^n(p)$. If we place the origin of $\R^2$ at this fixed point, we obtain the sequence
\[
t \subset \omega^n(t) \subset \omega^{2n}(t) \subset \omega^{3n}(t) \subset \cdots.
\]
The union $T := \cup_{k \in \N}\ \omega^{kn}(t)$ is a tiling of $\R^2$. We may modify the prototile set and the substitution rule so that $p \subset \omega(p)$ for each $p \in \PP$. This implies that the fixed points under the substitution rule are at the origin. We make this assumption for the remainder of this paper.

\begin{definition} \label{tilinggeneratedbyp}
Let $\omega$ be a primitive substitution rule on a set of prototiles $\PP$. For each $p \in \PP$, the \emph{tiling generated by $p$} is the tiling $T_p := \cup_{n \in \N}\ \omega^n(p)$.
\end{definition}

For each $p \in \PP$, the tiling generated by $p$ is a self-similar tiling in the sense of \cite[Definition 2.5]{FWW}, since $\omega(T_p) = T_p$. In the following sections, we will often require the map $\lambda \inverse \omega$, which applies the substitution rule to prototiles, and scales the resulting patch by $\lambda \inverse$. For each $p,q \in \PP$, and each non-empty subset $S \subset \R^2$, we define the following notation:
\[
S+D_{pq} := \begin{cases}
\ \bigcup_{x \in D_{pq}}\ (S+x) &\mbox{if } D_{pq} \neq \varnothing \\
\ \varnothing &\mbox{otherwise}
\end{cases}.
\]
Note that if we identify each prototile with its support, then for each $ p \in \PP $, we have
\[
\omega(p) = \bigcup_{q \in \PP}\ q+D_{pq}.
\]

From the digit matrix, we build a map $\RR$. Consider the set of non-empty compact subsets of $\R^2$, denoted by $H(\R^2)$. The domain of $\RR$ is given by $X := \sqcup_{p \in \PP}\ H(\R^2)$, and for $B \in X$, we write $B_p$ for the $p^{\text{th}}$ coordinate in the disjoint union. Then, for any $B \in X$ and $p \in \PP$, we define
\[
\RR(B)_p := \bigcup_{q \in \PP}\ \lambda \inverse(B_q+D_{pq}),
\]
and set $\RR(B) := \sqcup_{p \in \PP}\ \RR(B)_p$. We note that $ \RR (B) \in X $ for each $ B \in X $ so that $ \RR : X \to X $. The map $\RR$ is called the \emph{contraction map for $\omega$}, and can be found in \cite[Definition 4.2]{FWW}.

Let $ \widehat{\PP} $ denote the disjoint union of the supports of each prototile $ p \in \PP $. Then, $ \widehat\PP$ is an element of $X$, and it is straightforward to check that $\RR(\widehat\PP)=\widehat\PP$. In the sequel, we will often abuse notation, and identify the prototile set $ \PP $ with the corresponding supports $\widehat\PP$. Then, it is implicitly understood that these compact subsets of $X$ carry their label. For example, $\RR(\PP)_p$ is the collection of labelled compact subsets $\lambda^{-1} \omega(p)$ in $X_p$. Moreover, $\RR:X \to X$ can be iterated, and for each $ n \in \N $, we call each scaled down tile $t \in \RR^n(\PP)_p$ an \emph{$n$-subtile}, and each $t \in \RR(\PP)_p$ a \emph{subtile}.

\subsection{The discrete hull} \label{sec:discretehull}

In this subsection, we look at a particular subset of the continuous hull of a tiling, called the \emph{discrete hull}. Let $T$ be a tiling with prototile set $\PP$. Following Kellendonk \cite[Section 2.1]{Kel1}, for each $p \in \PP$, we choose a point in the interior of $p$. This point is called the \emph{puncture} of $p$, denoted $x(p)$. For any vector $y \in \R^2$, we choose the puncture of the tile $t := p+y$ to be $x(t):=x(p)+y$. In \cite[Section 2.1]{Kel1}, it is shown that each tile $t$ of every tiling in the continuous hull $\Omega$, can be assigned a puncture that is consistent in the sense that if $t := p+y$, then $x(t)=x(p)+y$.

\begin{definition}[{\cite[Definition 1]{Kel1}}] \label{defn:discretehull}
The \emph{discrete hull} of a tiling $ T $, is given by
\[
\phull := \{T' \in \Omega :\ \text{there exists}\ t \in T'\ \text{with}\ x(t) = 0 \} \subset \Omega.
\]
\end{definition}

The tilings in the discrete hull are those that contain a tile whose puncture is at the origin of $\R^2$. For each $T' \in \phull$, let $T'(0)$ denote the unique tile $t \in T'$ whose puncture is the origin. In the relative topology inherited from $\Omega$, and assuming finite local complexity, the discrete hull $\phull$ is a Cantor set. In particular, $\phull$ is a compact metric space which has a basis of both open and closed sets. Indeed, given a patch $P$ and a tile $t$ in $P$, the set
\[
U(P,t) := \{ T' \in \phull : P-x(t) \subset T' \}
\]
is both open and closed in $\phull$, and the set of all such sets forms a basis for the topology on $\phull$. Let $T',T'' \in \phull$. We say that $T'$ and $T''$ are \emph{translation equivalent} if there exists $t \in T'$ such that $T'-x(t) = T''$. We denote by $[T']$ the translation equivalence class of $T'$.

Let $\omega$ be a primitive substitution rule on a set of prototiles $\PP$. For the remainder of this paper, we assume that $x(p) = 0$ for each $p \in \PP$, so that each puncture is a fixed point under the substitution rule. The assumptions we have made on the substitution rule $\omega$, and the set of prototiles $\PP$, force $T(0) \in \PP$ for all $T \in \phull \subset \Omega$. Furthermore, for each $p \in \PP$, $x(p) = 0$ implies $T_p \in \phull$.

\section{Fractal dual tilings and fractal trees}\label{sec:fractal_trees}

In \cite{FWW}, the authors construct fractal dual substitution tilings from a given substitution tiling. At the core of their construction, is the notion of a \emph{recurrent pair}. In this section, we give a basic construction using dual trees, rather than quasi-dual trees, of the fractal dual substitution tilings described in \cite{FWW}. We note that the constructions in this paper work in full generality of quasi-dual trees, and are stated as such. In each definition, we refer the reader to the more general definition for quasi-dual trees found in \cite{FWW}. We begin with the definition of a consistent dual tree.

\begin{definition}[cf. {\cite[Definition 3.6 and 3.7]{FWW}}]
Let $T$ be a substitution tiling with finite local complexity, and prototile set $\PP$. A \emph{dual tree} $G_p$ in a prototile $p \in \PP$, consists of a vertex $v_p$ in the interior of $p$, and a collection of non-overlapping edges connecting $v_p$ to a boundary vertex in the interior of each edge of $p$. Considering $ \PP $ as an element of $ X $, we say that $G = \sqcup_{p \in \PP}\ G_p$ is a \emph{consistent dual tree} in $ \PP $, if $G_p$ is a dual tree in $p$ for each $ p \in \PP $, and if two translated prototile edges meet in the tiling $T$, then the associated boundary vertices of $G$ meet in $T$ as well.
\end{definition}

\begin{notation}
	Let $ T $ be a tiling with finite local complexity, and prototile set $ \PP $. Suppose that $ G = \sqcup_{p \in \PP}\ G_p $ is a consistent dual tree in $ \PP $. For each $ p \in \PP $, we denote the set of edges in $ G_p $ by $ E (G_p) $, the set of vertices in $ G_p $ by $ V (G_p) $, and $ \partial V (G_p) $ for those vertices which lie on $ \partial \supp (p) $. Moreover, for each $ n \in \N $, we write $ E (\RR^n (G)_p) $ for the images of edges in each $ G $ under $ \RR^n $.
\end{notation}

\begin{definition}[cf. {\cite[Definition 5.1]{FWW}}]
Let $T$ be a substitution tiling with finite local complexity, substitution rule $ \omega $, and prototile set $ \PP $. A pair of consistent dual trees $(G,S)$ on $\PP$, is called a \emph{recurrent pair} if $S \subset \rrule^n(G)$ for some $n \in \mathbb{N}$.
\end{definition}

Under certain conditions, developed in detail in \cite[Section 7.1]{FWW}, the attractor of a recurrent pair $(G,S)$ is a consistent dual tree $A$ inscribed in the prototile set $\PP$. Since the graph $A$ is the attractor of the graph iterated function system defined by equation (7.2) in \cite[Section 7.1]{FWW}, we call $A$ a \emph{fractal graph}. Moreover, $A$ is invariant under this graph iterated function system. It is well known that embedding a consistent dual tree into each prototile of a tiling yields a new tiling, often called the combinatorial dual of $T$. In this case, if we embed the dual trees in $A$ into each tile of $T$, we obtain a new tiling that is also a substitution tiling, since $A$ is built from the substitution of $T$.

\begin{thm}[{\cite[Theorems 6.6 and 6.9]{FWW}}]\label{thm6.6-FWW}
Let $T$ be a substitution tiling with finite local complexity, whose tiles meet singly edge-to-edge. Then $T$ has an infinite number of distinct fractal quasi-dual substitution tilings. If the prototiles of $T$ are all convex, then $T$ has an infinite number of distinct fractal dual substitution tilings.
\end{thm}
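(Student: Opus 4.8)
The plan is to deduce Theorem \ref{thm6.6-FWW} from the machinery of \cite{FWW} by exhibiting enough recurrent pairs $(G,S)$ on the prototile set $\PP$ that the resulting fractal (quasi-)dual trees are pairwise distinct, and then invoking the existence/attractor results of \cite[Section 7.1]{FWW} to pass from a recurrent pair to an actual fractal dual substitution tiling. First I would fix a single consistent quasi-dual tree $G = \sqcup_{p \in \PP} G_p$ on $\PP$; since $T$ has finite local complexity and its tiles meet singly edge-to-edge, one can always place a vertex $v_p$ in the interior of each prototile and non-overlapping edges running to a boundary vertex in the relative interior of each prototile edge, matching up consistently across shared edges (in the convex case the straight-line segments automatically stay inside $p$, which is exactly why convexity upgrades ``quasi-dual'' to ``dual''). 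Then I would look inside $\RR^n(G)_p$: after applying the contraction map $n$ times, the scaled-down copies of the $G_q$'s sitting in the $n$-subtiles of $p$, together with a connecting subtree through the interior, form a candidate $S$; by construction $S \subset \RR^n(G)$, so $(G,S)$ is a recurrent pair, and \cite[Section 7.1]{FWW} produces its attractor $A$, a consistent (quasi-)dual tree inscribed in $\PP$, which embeds into each tile of $T$ to give the fractal (quasi-)dual substitution tiling.

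The second and essential half is the word \emph{infinite}: I must produce infinitely many \emph{distinct} such tilings. The natural source of freedom is the choice of how the connecting subtree of $S$ threads through the interior of $p$ — equivalently, which internal route the edges of $G$ are allowed to take once we are several levels of substitution down, or which boundary-vertex positions along each prototile edge we select at the base level (these positions are rigid along any given shared edge but can be chosen differently for different edge-types, and the fractal that grows from them differs). Concretely, I would construct a one-parameter (or $\N$-indexed) family $(G^{(k)}, S^{(k)})$ of recurrent pairs, differing in a combinatorial invariant that is preserved by the graph iterated function system: for instance, the homotopy class (rel endpoints) of the embedded edge-path in a fixed prototile, or the cyclic order in which the fractal edges leave the central vertex, or even a coarse feature like the number of times a given fractal edge crosses a fixed line segment. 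Because $A^{(k)}$ is invariant under the IFS associated to $(G^{(k)}, S^{(k)})$, this invariant is inherited by the attractor, so distinct values of $k$ give non-isometric (indeed non-combinatorially-equivalent) fractal dual trees, hence distinct fractal dual substitution tilings of $T$.

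I expect the main obstacle to be exactly the step of (i) verifying that infinitely many genuinely different recurrent pairs exist at all — one must check that the combinatorial/homotopical freedom in routing $S$ through $p$ does not collapse under iteration, i.e.\ that the IFS of \cite[Section 7.1]{FWW} actually has a well-defined attractor for each choice and that these attractors are pairwise distinct rather than coinciding after rescaling — and (ii) in the convex case, confirming that this freedom can be exercised while keeping all edges inside the (convex) prototiles and keeping the dual trees consistent across shared edges, so that ``quasi-dual'' can be sharpened to ``dual''. Both points are essentially bookkeeping within the framework of \cite{FWW}: the contraction map $\RR$, the recurrent-pair condition $S \subset \RR^n(G)$, and the attractor construction of \cite[Section 7.1]{FWW} are all in hand, so once a suitable discrete invariant distinguishing the $A^{(k)}$ is identified, the theorem follows. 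The convex hypothesis enters only to guarantee that straight (or piecewise-straight) fractal edges remain admissible, which is why the stronger ``dual'' conclusion is available there, whereas ``quasi-dual'' requires no such hypothesis.
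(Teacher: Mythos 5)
The paper offers no proof of this statement to compare yours against: it is imported verbatim from \cite[Theorems 6.6 and 6.9]{FWW} and used purely as background, so the only question is whether your outline would stand on its own. Structurally it matches the cited construction (build a consistent (quasi-)dual tree $G$, find $S \subset \RR^n(G)$ to form a recurrent pair, take the attractor of the associated graph iterated function system, and embed it in the tiles of $T$), but it has a genuine gap exactly where the theorem has its content, namely the word \emph{infinite}. You never actually exhibit infinitely many recurrent pairs, and your candidate distinguishing invariants (homotopy class rel endpoints, cyclic order at the central vertex, crossing numbers with a fixed segment) are left unverified on all three counts that matter: (a) that each value is realised by an admissible recurrent pair, i.e.\ one whose boundary vertices satisfy the consistency condition across shared prototile edges and whose edges are non-overlapping; (b) that the invariant survives the passage to the limit under the contraction $\RR$, so that it is really an invariant of the attractor $A^{(k)}$ and not just of the finite approximants; and (c) that distinct attractors yield distinct fractal substitution \emph{tilings} rather than merely distinct decorated prototiles --- one still has to rule out that two different attractors produce the same tiling up to the relevant equivalence. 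The sentence ``once a suitable discrete invariant distinguishing the $A^{(k)}$ is identified, the theorem follows'' defers precisely the work the theorem is asserting.

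A smaller but real issue is the convex/non-convex dichotomy, which you assert rather than argue. Consistency forces the boundary vertex on a given prototile edge to be matched by the corresponding vertex on every translated copy of that edge it meets in $T$, so the ``freedom'' of moving boundary vertices per edge-type interacts nontrivially with the singly edge-to-edge and finite local complexity hypotheses; and in the non-convex case one must explain why a genuine dual tree may fail to exist and why the weaker quasi-dual notion suffices for the recurrent-pair machinery --- this is the reason \cite{FWW} introduces quasi-dual trees at all, not a cosmetic relabelling. None of this shows your strategy is wrong, but as written it is a plausible sketch of how the proof in \cite{FWW} might go, not a proof.
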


\begin{example}[A fractal version of the Penrose tiling]\label{penrose_recurrentpair}
Recall the Penrose tiling constructed in Example \ref{eg:penrose}. The trees giving rise to a recurrent pair $(G,S)$ are depicted in Figure \ref{eg:penrosefractal} on two prototiles, and extend uniquely to all forty prototiles by rotation and reflection. The unique fractal graph $A$ defined by the recurrent pair $(G,S)$ is depicted on the right hand side of Figure \ref{eg:penrosefractal}, and also extends to all forty prototiles.

\begin{figure}[h!]
\centering
\begin{tikzpicture}[>=stealth,scale=1.1]
\node (a) at (0,4) [label=below:{$G$}] {\includegraphics[width=3cm]{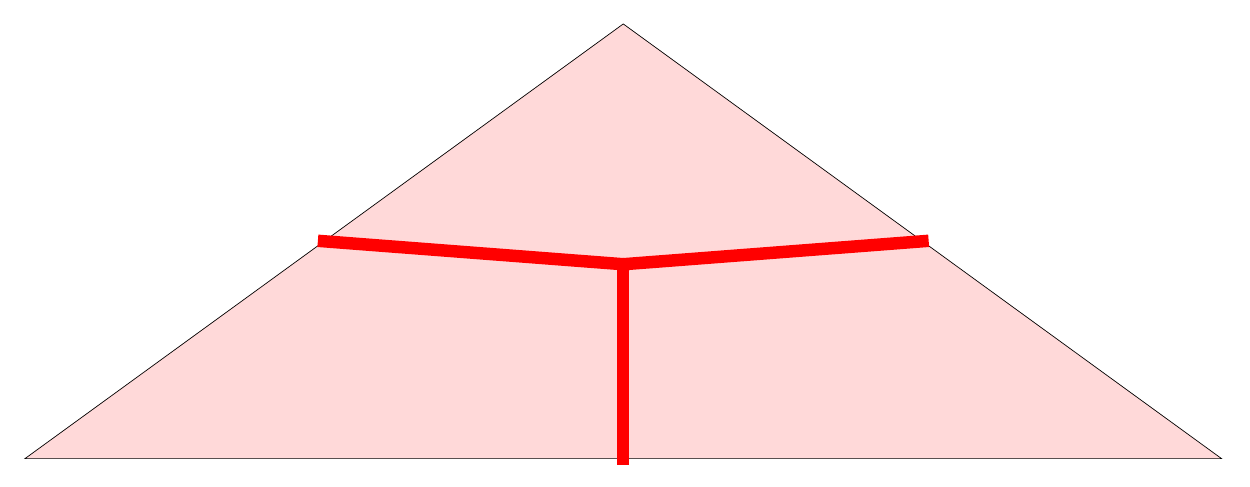}};
\node (b) at (3.5,4) [label=below:{$\rrule^2(G)$}] {\includegraphics[width=3cm]{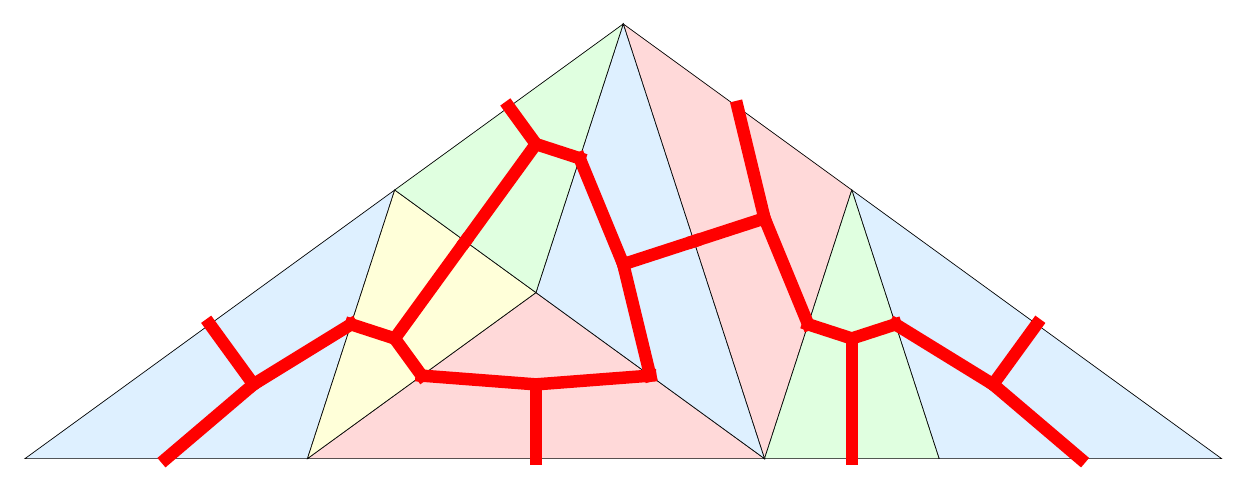}}
	edge[<-] (a);
\node (c) at (7,4) [label=below:{$S$}] {\includegraphics[width=3cm]{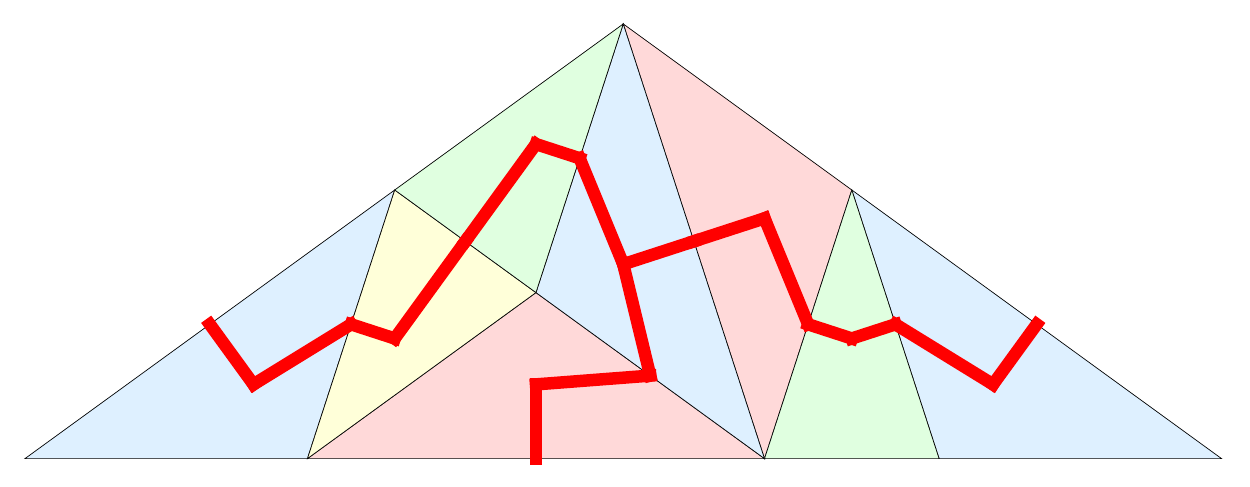}}
	edge[<-] (b);
\node (d) at (10.5,4) [label=below:{$A$}] {\includegraphics[width=3cm]{penrose_lim_1}}
	edge[<-,dashed] (c);
	
\node (e) at (1,1.5) [label=below:{$G$}] {\includegraphics[width=1.5cm]{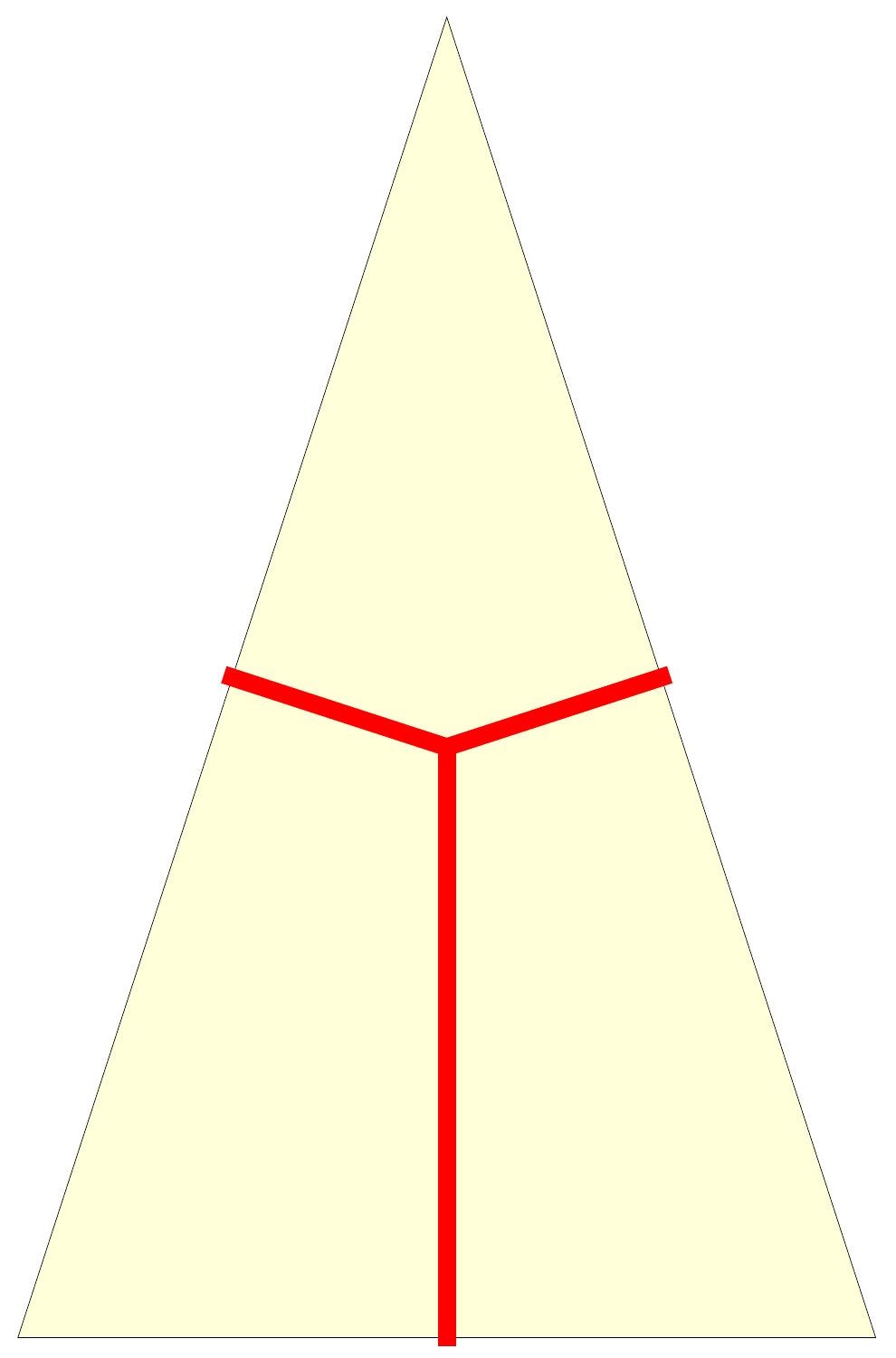}};
\node (f) at (4,1.5) [label=below:{$\rrule^2(G)$}] {\includegraphics[width=1.5cm]{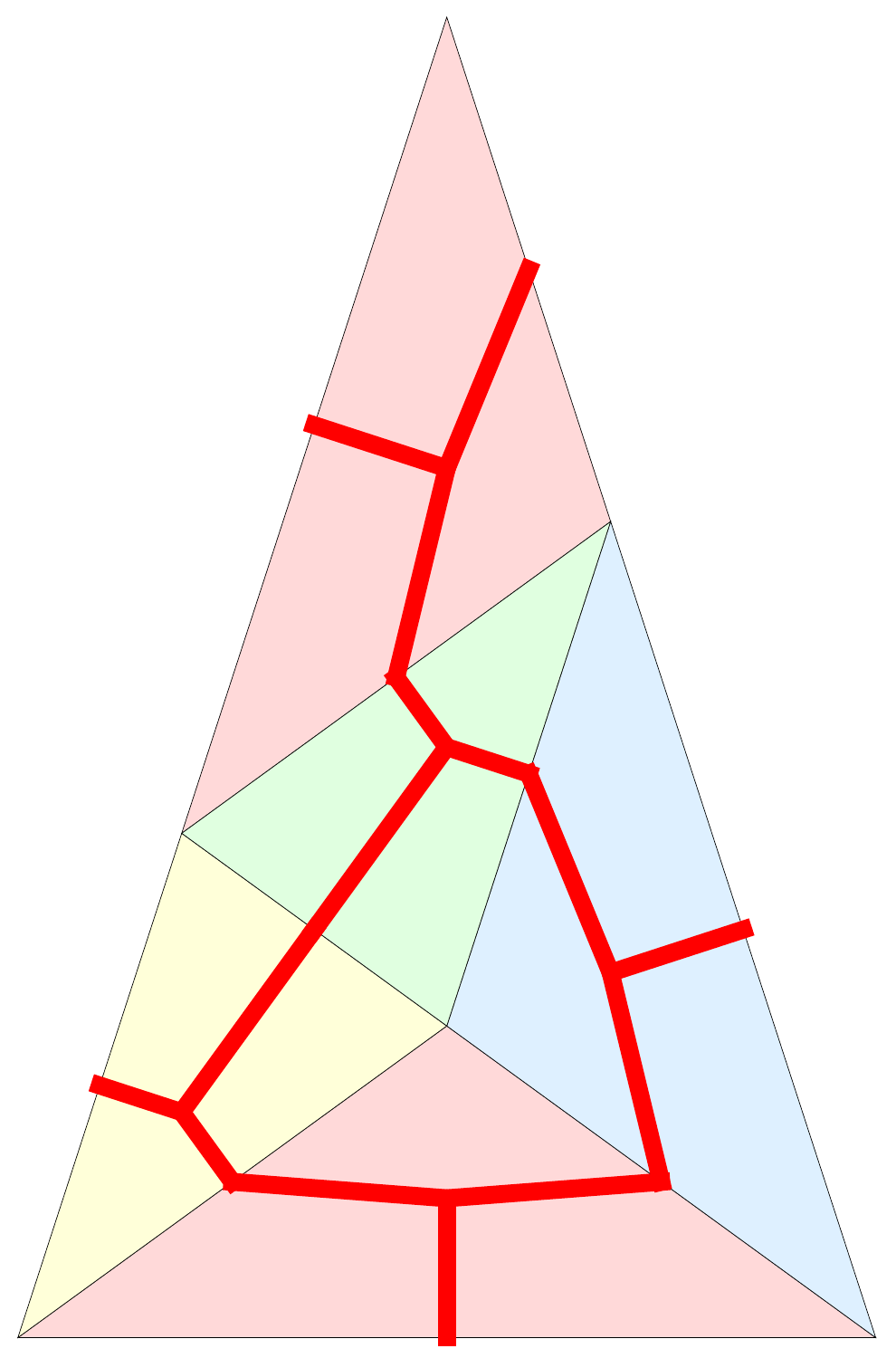}}
	edge[<-] (e);
\node (g) at (7,1.5) [label=below:{$S$}] {\includegraphics[width=1.5cm]{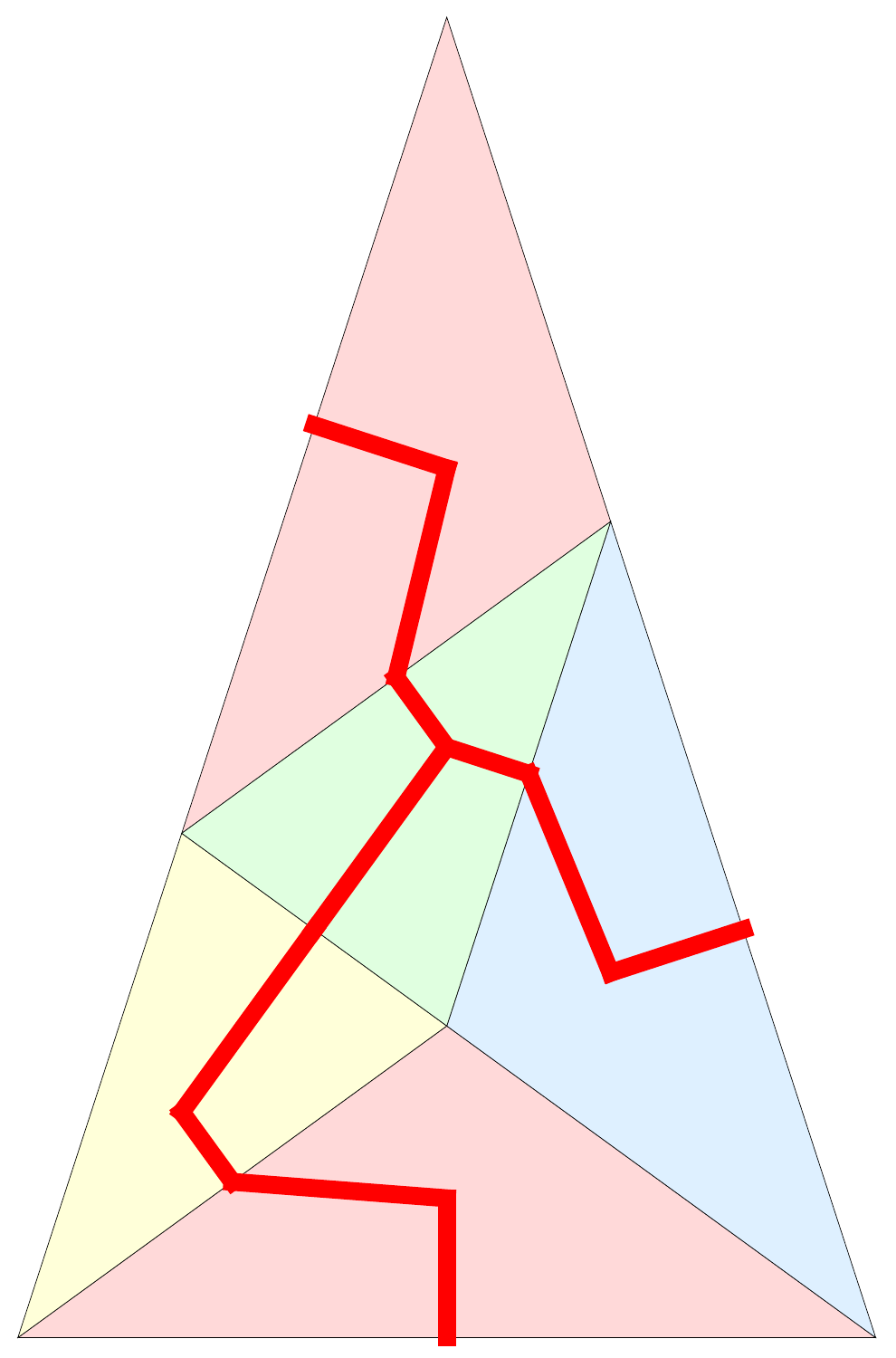}}
	edge[<-] (f);
\node (h) at (10,1.5) [label=below:{$A$}] {\includegraphics[width=1.5cm]{penrose_lim_3}}
	edge[<-,dashed] (g);
\end{tikzpicture}
\caption{A fractal dual for the Penrose prototiles} \label{eg:penrosefractal}
\end{figure}

Placing the graph $A$ in each tile of a Penrose tiling $T$, defines a new tiling whose tiles have fractal borders. Figure \ref{eg:penrosefractaltiling} shows a patch of the fractal dual tiling of the Penrose tiling overlaid on a patch of the original Penrose tiling. The substitution on the fractal realisation is inherited from the original Penrose tiling. See \cite[Appendix A]{FWW} for several additional examples.

\begin{figure}[h!]
\centering
\frame{\includegraphics[trim=0 25 0 35,clip,width=0.9\textwidth]{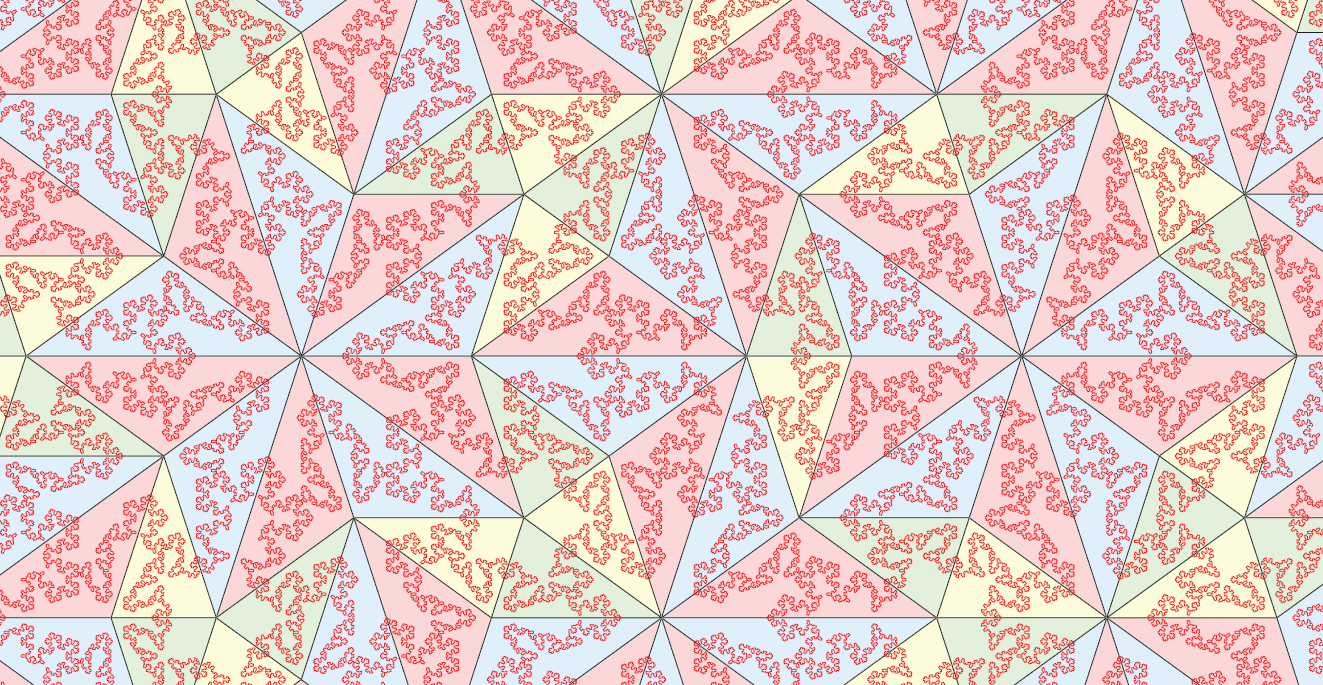}}
\caption{A fractal dual tiling} \label{eg:penrosefractaltiling}
\end{figure}

\end{example}

Now that we have defined recurrent pairs and the associated fractal dual, we are now able to define fractal trees. These are defined as decorations on the tiling $T_p$ for each $p \in \PP$. In short, a fractal tree on a tiling is a network of fractal edges, such that the punctures of two distinct tiles are uniquely connected by a fractal path. We use fractal trees to define spectral triples.

Let $(G,S)$ be a recurrent pair of quasi-dual trees, such that for each $p \in \PP$, the puncture $ x (p) $ is an interior vertex in $ G_p $. The algorithm in the proof of \cite[Theorem 6.6]{FWW} implies it is always possible to set the corresponding interior vertex in $S_p$ to be $x(p)$ as well. This forces an interior vertex of the fractal graph $A_p$ to be $x(p)$. We make this additional assumption for the remainder of this paper.

\begin{thm} \label{fractaltree}
Let $\omega$ be a primitive substitution rule on a set of prototiles $\PP$, with a recurrent pair $(G,S)$ of quasi-dual trees. Then, for each $p \in \PP$, there exists a geometric graph $\mathfrak{F}_p$ in $T_p$ such that $\mathfrak{F}_p$ is a connected tree, and $ x(t) \in \mathfrak{F}_p $ for each tile $t \in T_p$.
\end{thm}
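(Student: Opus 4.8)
The plan is to build $\mathfrak{F}_p$ as an increasing union of finite approximating trees coming from the hierarchical structure of $T_p$. Since $T_p = \bigcup_{n \in \N} \omega^n(p)$, and $\omega^{n-1}(p) \subset \omega^n(p)$ by our standing assumption that $p \subset \omega(p)$, the tiling $T_p$ is exhausted by the nested supertiles $\omega^n(p)$. I would first define, for each $n$, a finite geometric graph $\mathfrak{F}_p^{(n)}$ inscribed in the patch $\omega^n(p)$, obtained by placing a scaled copy of the fractal graph $A_q$ into each tile of $\omega^n(p)$ (each such tile being a translate of some prototile $q \in \PP$) and taking the union of these embedded trees over all tiles in the patch. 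Because $A = \sqcup_{q \in \PP} A_q$ is a \emph{consistent} dual tree, whenever two tiles of $\omega^n(p)$ meet along a shared edge, the corresponding boundary vertices of the embedded copies of $A$ coincide; this is exactly the gluing condition that makes $\mathfrak{F}_p^{(n)}$ into a genuine geometric graph in the plane rather than a disjoint collection.

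The key structural step is to verify that $\mathfrak{F}_p^{(n)}$ is a connected tree for each $n$, and that $\mathfrak{F}_p^{(n)} \subset \mathfrak{F}_p^{(n+1)}$ so the union $\mathfrak{F}_p := \bigcup_n \mathfrak{F}_p^{(n)}$ is well-defined and connected. Connectedness I would prove by induction on $n$, using that $\omega^{n+1}(p)$ is obtained from $\omega^n(p)$ by substituting each tile, that each $A_q$ is itself a connected tree joining its central vertex $v_q$ to a vertex in the interior of each edge of $q$, and that the patch $\omega^n(p)$ has connected interior (it is a patch, and tiles are attached edge-to-edge), so the dual graph on tiles-and-shared-edges is connected and each edge-crossing is realized by a shared boundary vertex of the embedded $A$'s. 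The tree (acyclic) property follows because a consistent dual tree inscribed in a patch with simply-connected support has no cycles: any cycle would have to enclose a vertex of the tiling, but the dual tree of each tile is contractible to its central vertex, and the usual duality argument (a cycle in the combinatorial dual corresponds to a separating set of edges of the tiling, impossible in a simply-connected patch with no interior tiling-vertices "trapped") rules this out — here I would invoke that this is the standard reason the combinatorial dual of a planar patch with disc-like support is a tree, and cite \cite{FWW} for the fractal analogue. Nestedness $\mathfrak{F}_p^{(n)} \subset \mathfrak{F}_p^{(n+1)}$ is the statement that refining via $\omega$ only adds fractal structure inside each tile while keeping the already-drawn graph: this is precisely the invariance of $A$ under the graph iterated function system defining it (so that embedding $A$ in a tile $t$ and then subdividing $t$ reproduces the embedded-$A$ picture on $t$ together with the finer copies), which is guaranteed by the recurrent pair $(G,S)$ and Theorem~\ref{thm6.6-FWW}.

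Finally, I would check $x(t) \in \mathfrak{F}_p$ for every tile $t \in T_p$. Each such $t$ lies in $\omega^n(p)$ for some $n$ (indeed for all large $n$), and by the additional standing assumption that the puncture $x(q)$ is the central interior vertex of $A_q$ for each prototile $q$, the embedded copy of $A$ in $t$ contains the translate of that central vertex, which is exactly $x(t)$; hence $x(t) \in \mathfrak{F}_p^{(n)} \subset \mathfrak{F}_p$.

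The main obstacle I anticipate is the acyclicity (tree) claim at the fractal level: one must be careful that passing to the self-similar fractal limit $A$ of the recurrent pair does not create loops, and that the gluing of infinitely many fractal pieces across the unbounded tiling $T_p$ does not produce a cycle "at infinity." Both points are controlled by the fact that each $\mathfrak{F}_p^{(n)}$ is a finite tree and connectedness/acyclicity pass to nested unions, so the real content is the finite-level inductive argument together with the consistency of $A$; I would lean on \cite[Section 7.1]{FWW} for the fact that the attractor $A$ is itself a consistent dual tree (hence locally a tree in each prototile), reducing the problem to the combinatorial-dual-of-a-patch argument above.
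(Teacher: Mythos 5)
Your construction has a fatal flaw at the acyclicity step. If you embed the full consistent dual tree $A_q$ into \emph{every} tile of $\omega^n(p)$ and glue across shared edges at the boundary vertices, the resulting graph is not a tree: around each interior vertex $v$ of the patch, the tiles $t_1,\dots,t_k$ meeting at $v$ contribute paths (boundary vertex on the edge $t_i\cap t_{i+1}$, through the central vertex of $t_i$, to the boundary vertex on $t_{i-1}\cap t_i$) which concatenate into a closed cycle encircling $v$. This is not an accident to be ruled out by a duality argument --- it is precisely the mechanism by which the fractal \emph{dual tiling} of \cite{FWW} acquires its tiles (the cycles bound the new fractal tiles, as in Figure \ref{eg:penrosefractaltiling}). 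Your claim that ``the combinatorial dual of a planar patch with disc-like support is a tree'' is false whenever the patch has interior vertices, which $\omega^n(p)$ certainly does for large $n$; so the graph $\mathfrak{F}_p^{(n)}$ you define is connected but far from acyclic, and uniqueness of fractal paths (the whole point of the theorem, needed later for $d_{\mathfrak{F}_p}$) fails.

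The paper's proof is designed exactly to avoid this. One first passes to the subgraph $A^\circ_p$ obtained by deleting from $A_p$ (viewed as edges of $\RR(A)_p$) every edge with an endpoint on $\partial\supp(p)$, then augments it by finitely many paths of $\RR(A)_p$ to obtain a tree $(F_1)_p$ that lies in $\interior(\supp(p))$ and passes through the punctures of all subtiles. Because each $(F_n)_p$ stays strictly inside the prototile, the copies of $F_{n-1}$ substituted into the subtiles of $\omega(p)$ never touch one another across subtile boundaries; connectivity between subtiles is supplied instead by the coarser tree $(F_{n-1})_p$ in the union $F_n=\RR(F_{n-1})\cup F_{n-1}$, and the invariance of $A$ under $\RR$ guarantees that $\RR(F_{n-1})$ and $F_{n-1}$ overlap along whole edges rather than creating new loops. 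That interplay of pruning boundary edges and using the hierarchy to restore connectivity is the missing idea in your proposal; without it (or some equivalent cycle-breaking device, e.g.\ choosing a spanning tree at each stage compatibly with the substitution), the approach of simply gluing copies of $A$ tile by tile cannot yield the tree asserted in the theorem.
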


\begin{proof}
Let $(G,S)$ be a recurrent pair of quasi-dual trees on $\PP$, and let $A$ be the associated fractal graph. We first define a subgraph $A^\circ$ of $A$ that does not meet the boundary of any prototile. For each $p \in \PP$, view $ A_p $ as a graph consisting of edges in $ \RR (A)_p $, and define
\[
A^\circ_p := A_p \setminus \{ e \in E (\RR(A)_p) : \text{an endpoint of} \ e \ \text{is in} \ \partial V (A_p) \},
\]
then $A^\circ := \sqcup_{p \in \PP}\ A^\circ_p$ is a subgraph of $A$ that does not meet the boundary of any prototile.

We now claim that we can construct a graph $F_1 \subset \RR(A)$ in $\PP$ such that for each $ p \in \PP $,
\begin{enumerate}[(1)]
\item $(F_1)_p$ is a connected tree; \label{connect}
\item $ x (t) \in V ((F_1)_p) $ for all $ t \in \RR (\PP)_p $; and \label{puncture}
\item $(F_1)_p \subset \interior(\supp(p))$ with $A^\circ_p$ a subgraph of $(F_1)_p$. \label{interior}
\end{enumerate}
Before constructing the graph $F_1$, we comment on the significance of conditions \eqref{connect}--\eqref{interior}. Condition \eqref{connect} ensures that any pair of vertices in $(F_1)_p$ are connected by a unique path for each $p \in \PP$. Condition \eqref{puncture} says that the graph $(F_1)_p$ passes through the puncture of every subtile in $\RR(\PP)_p$ for each $p \in \PP$. This means that every subtile is connected to the graph $F_1$ via its puncture. Condition \eqref{interior} ensures that the graph $(F_1)_p$ is wholly contained within the interior of the support of $p$ for each $p \in \PP$. This means that the graph contains no edges which have a boundary vertex as one of its endpoints. Moreover, condition \eqref{interior} also says that we are retaining the basic structure of $ A $.

We now construct the graph $ F_1 $ inductively. First, set $ B_1 := A^\circ $. For $ k \in \N $, if $ B_k $ does not satisfy conditions \eqref{connect}--\eqref{interior}, then there exists $ p \in \PP $, and a subtile $ t \in \RR (\PP)_p $ such that $ x (t) \notin V ((B_n)_p) $. We construct $ B_{k+1} $ as follows. For each $ q \in \PP $ such that $ q \neq p $, we define $ (B_{k+1})_q := (B_k)_q $. Now, since $ \RR (A)_p $ connects the punctures of the subtiles in $ \RR(\PP)_p $, there is a path $ \mu $ in $ \RR (A)_p $ connecting $ x (t) $ to $ (B_k)_p $ which intersects $ (B_k)_p $ at a single vertex. Define $ (B_{k+1})_p $ to be the graph $ (B_k)_p $ along with the edges comprising the path $ \mu $. Then $ x (t) $ is connected to the graph $ (B_{k+1})_p $ as a degree one vertex. Since there are only a finite number of prototiles, and hence subtiles, there exists $N \in \N$ sufficiently large such that $x(t) \in B_N$ for all subtitles $t \in \RR(\PP)$. Then it is routine to check that the graph $F_1:=B_N$ satisfies \eqref{connect}--\eqref{interior}, proving the claim.

For $ n > 2 $, we inductively define $F_n$ by the formula,
\begin{equation} \label{eq:induct_graph}
F_n := \RR(F_{n-1}) \cup F_{n-1}.
\end{equation}
We claim that for all $n \in \N$, $F_n$ is a graph in $ \PP $ such that for each $ p \in \PP $,
\begin{enumerate}[(i)]
\item $(F_n)_p$ is a connected tree; \label{ind_connect}
\item $ x (t) \in V ((F_n)_p) $ for all $ t \in \RR^n (\PP)_p $; and \label{ind_puncture}
\item $(F_n)_p \subset \interior(\supp(p))$. \label{ind_interior}
\end{enumerate}
Since $ F_n $ is built from the attractor $ A $, the formula for $ F_n $, given in equation \eqref{eq:induct_graph}, gives a well-defined subgraph of $ \RR^n (A) $. For the remainder of the claim, we proceed by induction on $n$. By conditions \eqref{connect}--\eqref{interior}, $F_1$ satisfies \eqref{ind_connect}--\eqref{ind_interior}. Assume $F_{n-1}$ satisfies \eqref{ind_connect}--\eqref{ind_interior}. We prove $F_n$ does as well. 
\begin{enumerate}[(i)]
\item By our inductive hypothesis, $\RR(F_{n-1})_p$ is a connected tree in each subtile $t \in \RR(\PP)_p$. Moreover, there exists a unique path between any two subtiles in $ \RR (\PP)_p $ via the connected tree $ (F_{n-1})_p $. Thus, the union $\RR(F_{n-1})_p \cup (F_{n-1})_p$, which is a subgraph of $ \RR^n (A) $, is a connected tree in $\RR^n(\PP)_p$. We remark that the invariance of $A$ under the map $\RR$ is essential for this step to work, which is why we require fractal trees rather than trees.
\item By our inductive hypothesis, $ x (t) \in V ((F_{n-1})_p) $ for all $ t \in \RR^{n-1}(\PP)_p$. Thus, $ x (t) \in V (\RR (F_{n-1})_p) $ for all $ t \in \RR^n (\PP)_p $. Since $\RR(F_{n-1})_p \subset (F_n)_p$, we have $ x (t) \in V ((F_n)_p) $ for all $ t \in \RR^n (\PP)_p $.
\item By our inductive hypothesis, $(F_{n-1})_p \subset \interior(\supp(p))$ and hence, $\RR(F_{n-1})_p$ is contained in the union of the interior of the subtiles in $\RR(\PP)_p$. That is, $\RR(F_{n-1})_p \subset \interior(\supp(p))$.
\end{enumerate}
Thus by induction, there exists a geometric graph $(F_n)_p$ satisfying conditions \eqref{ind_connect}--\eqref{ind_interior} for each $n \in \N$.

We now define a fractal graph $\mathfrak{F}_n$ on $\omega^n(\PP)$ for each $n \in \N$ in the following way. For each $p \in \PP$ and $n \in \N$, define $(\mathfrak{F}_n)_p := \lambda^n(F_n)_p$. By construction, $(\mathfrak{F}_n)_p$ is a geometric graph in $\omega^n(p)$ containing the puncture of each tile in $\omega^n(p)$. Moreover, $(\mathfrak{F}_n)_p \subset (\mathfrak{F}_{n+1})_p$ for each $n \in \N$.
By construction of $T_p$, we have $\omega^n(p) \subset \omega^{n+1}(p)$ for all $ n \in \N $, so that
\[
\mathfrak{F}_p := \bigcup_{n=1}^{\infty}\ (\mathfrak{F}_n)_p,
\]
is a geometric graph on $T_p$ connecting the punctures in $T_p$ by a unique fractal path.
\end{proof}

\begin{definition}
Let $ \omega $ be a primitive substitution rule on a set of prototiles $ \PP $, with a recurrent pair $ (G,S) $ or quasi-dual trees. For each $p \in \PP$, we will refer to a geometric graph $\mathfrak{F}_p$, as in the statement of Theorem \ref{fractaltree}, as a \emph{fractal tree} on $T_p$.
\end{definition}

For fixed $p \in \PP$, suppose that $T_p$ is endowed with a fractal tree $\mathfrak{F}_p$. We define a distance between two tiles in $T_p$ via $\mathfrak{F}_p$. Since fractal edges typically have infinite Euclidean length, the length of a fractal path is defined using Perron-Frobenius theory. Let $(G,S)$ be the recurrent pair which we used to construct the fractal tree $\mathfrak{F}_p$, as in Theorem \ref{fractaltree}, and let $A$ be the corresponding fractal graph. Since $\mathfrak{F}_p$ consists of edges in $A$, we assign a length to each edge in $A$ and then add the appropriate edge lengths to obtain a length between punctures of tiles in $T_p$.

Since $(G,S)$ is a recurrent pair, $G$ and $S$ are homeomorphic geometric graphs (planar graphs embedded in $\R^2$). Let $\psi : G \to S$ denote the homeomorphism. For each edge $e \in E(G)$, the edge $\psi(e) \in E(S)$ is composed of edges of the form $\lambda \inverse(f+x)$ where $f \in E(G)$, and $x \in \R^2$. We may view $\psi$ as a substitution rule on the set of edges.

\begin{definition}
Let $ \PP $ be a set of prototiles, and $ (G,S) $ a recurrent pair in $ \PP $. Denote by $ \psi $ the homeomorphism between $ G $ and $ S $. For each $p,q \in \PP$, $e \in G_p$ and $f \in G_q$, define
\[
D^E_{ef} := \{ x \in D_{pq} : \lambda \inverse (f+x) \subseteq \psi(e) \}.
\]
The matrix $D^E := (D^E_{ef})$ is called the \emph{edge digit matrix}. Further, let $M^E_{ef} := |D^E_{ef}|$. That is, $M^E_{ef}$ is the number of scaled copies of $f$ in $\psi(e)$. The matrix $M^E := (M^E_{ef})$ is the \emph{edge substitution matrix}.
\end{definition}

When $M^E$ is primitive, the Perron-Frobenius Theorem defines a unique real eigenvalue $\kappa >1$, with associated eigenvector $v = (v_e)_{e \in E(G)}$, whose entries are all positive real numbers. We note that $M^E$ may not be primitive for all choices of recurrent pairs, however, it is routine to see that a recurrent pair $(G,S)$ with primitive substitution matrix $M^E$ can always be chosen for all tilings satisfying the hypothesis of \cite[Theorem 6.6]{FWW} (see also Theorem \ref{thm6.6-FWW}). 

\begin{definition}
Suppose that $ (G,S) $ is a recurrent pair in a set of prototiles $ \PP $, and let $ A $ be the associated fractal graph. Let $v = (v_e)_{e \in E(G)}$ be the Perron-Frobenius eigenvector for the primitive edge substitution matrix $M^E$. For each $e \in E(A)$, we write $ \ell (e) := v_e $ for the \emph{length} of $e$.
\end{definition}

\begin{definition} \label{defn:fractal_distance}
Let $ \PP $ be a set of prototiles, and fix $ p \in \PP $. Suppose $t,t' \in T_p$, and $\mathfrak{F}_p$ is a fractal tree on $T_p$. The \emph{fractal distance} between $t$ and $t'$, denoted $d_{\mathfrak{F}_p}(t,t')$, is the sum of the fractal edges making up the unique fractal path in $\mathfrak{F}_p$ between the punctures $x(t)$ and $x(t')$.
\end{definition}

\begin{example}[A fractal tree for the Penrose tiling] \label{eg:fractaltree}

In this example, we illustrate the construction of a fractal tree for the recurrent pair defined on the Penrose tiling in Example \ref{penrose_recurrentpair}. We then use the Perron-Frobenius Theorem to assign lengths to the fractal edges depicted in Figure \ref{fig:fractal_edges}, which extend to the edges in all forty prototiles by rotation.
Putting these two steps together defines a fractal distance between the punctures of tiles $t$ and $t'$ in $T_p$, for all $p \in \PP$. 

\begin{center}
\begin{figure}[H]
\scalebox{0.7}{
\begin{tikzpicture}[>=stealth,scale=1.1]
\node at (0,0) [label=below:{\large $p_1$}] {\includegraphics[width=0.4\textwidth]{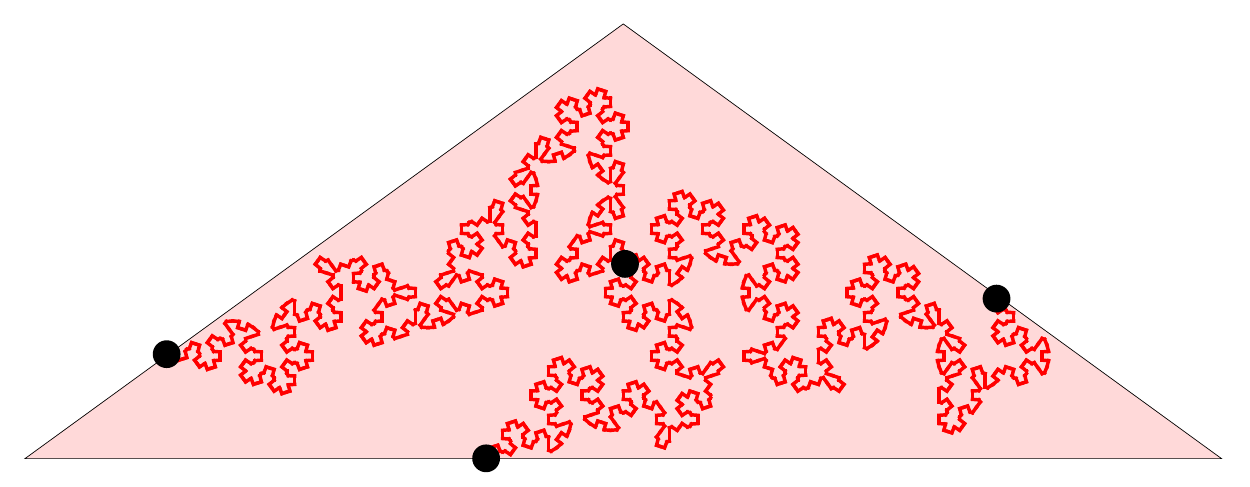}};
\node at (7,0) [label=below:{\large $p_2$}] {\includegraphics[width=0.4\textwidth]{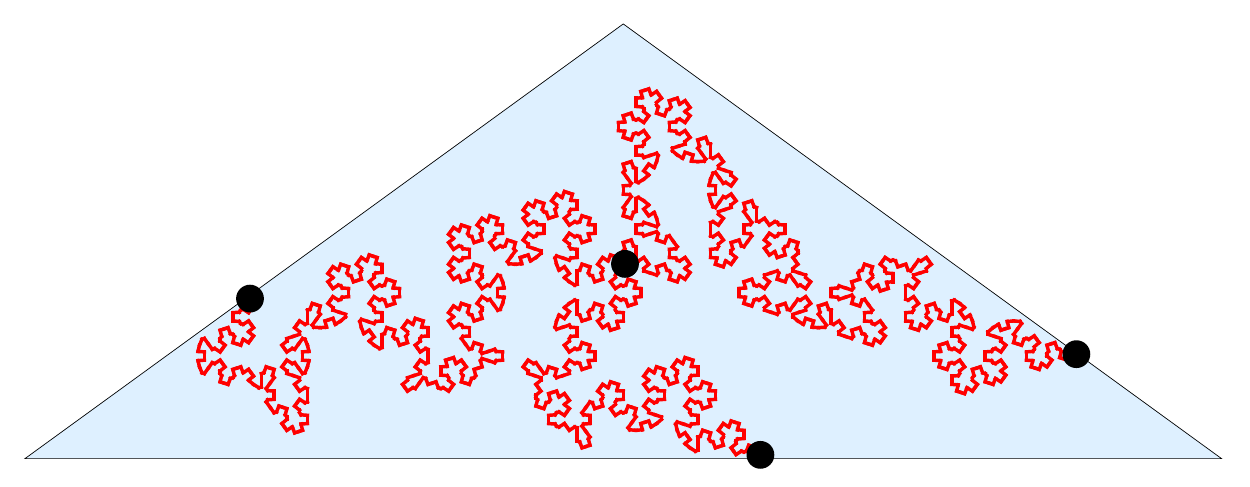}};
\node at (12,0) [label=below:{\large $p_3$}] {\includegraphics[width=0.2\textwidth]{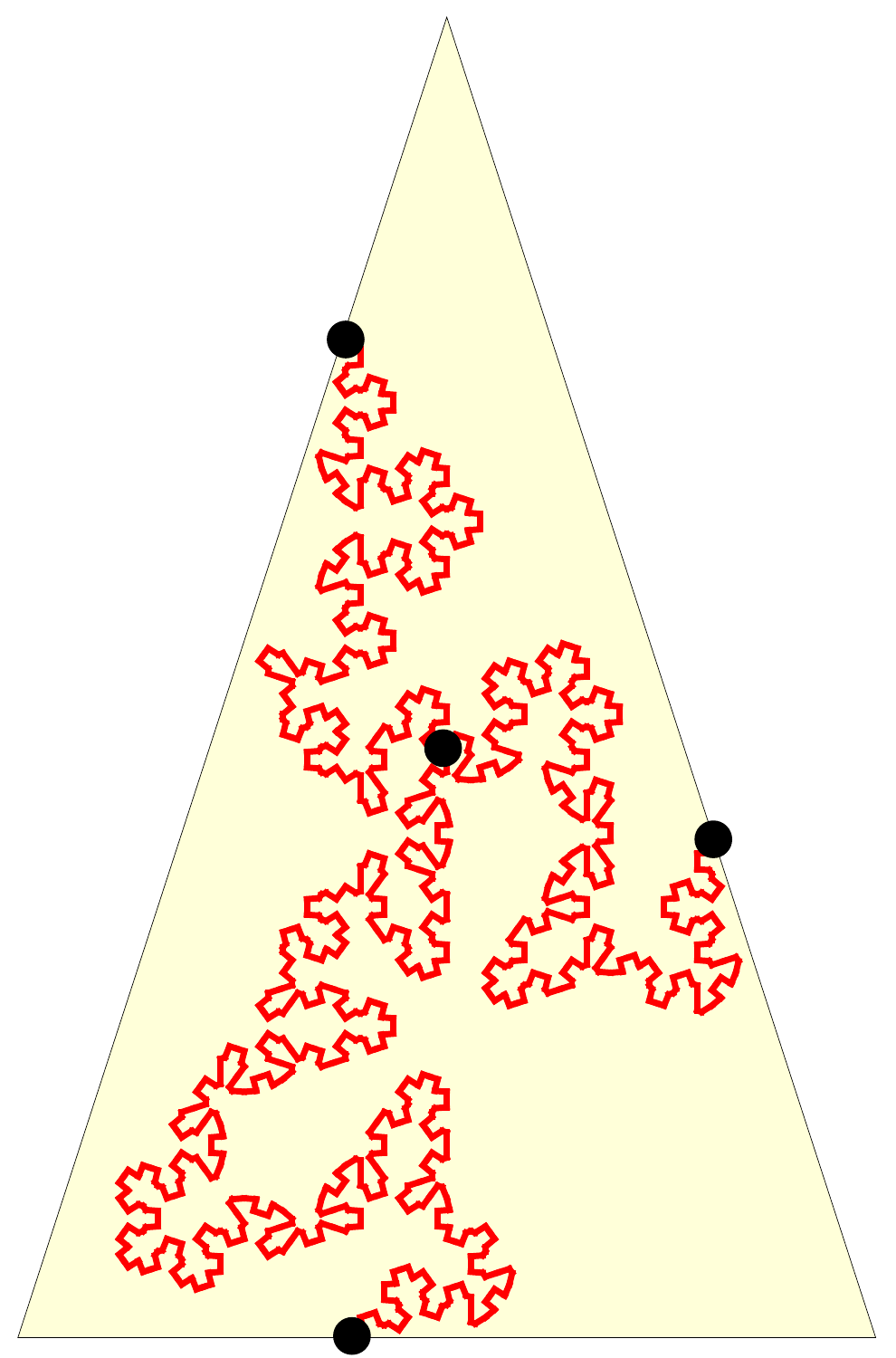}};
\node at (16,0) [label=below:{\large $p_4$}] {\includegraphics[width=0.2\textwidth]{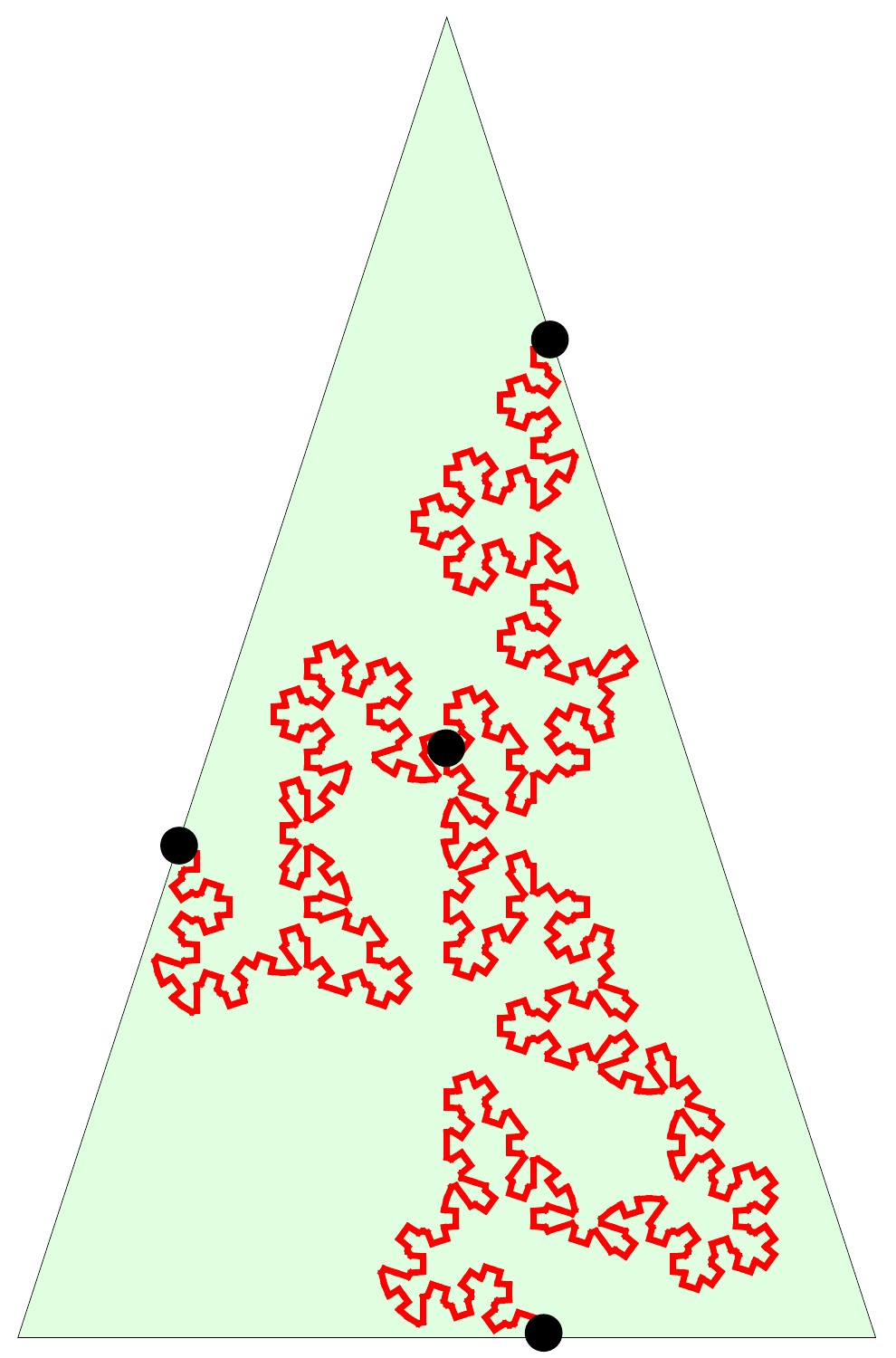}};
\node at (-0.7,-1.4) {1};
\node at (-2.6,-0.4) {2};
\node at (2.2,-0.1) {3};
\node at (7.7,-1.4) {4};
\node at (5,0) {5};
\node at (9.5,-0.4) {6};
\node at (11.7,-2.65) {7};
\node at (11.4,1.3) {8};
\node at (13.2,-0.6) {9};
\node at (16.4,-2.65) {10};
\node at (14.7,-0.6) {11};
\node at (16.7,1.3) {12};
\end{tikzpicture}}
\caption{12 of the fractal edges in the graph $A$ for the Penrose tiling}
\label{fig:fractal_edges}
\end{figure}
\end{center}

To elucidate the algorithm used in the proof of Theorem \ref{fractaltree}, we restrict our attention to prototile $p_1$. Starting with the recurrent pair in Example \ref{penrose_recurrentpair}, the first step is to construct the graph $A^\circ_{p_1}$ by removing the edges of $\RR(A)_{p_1}$ in $ A_{p_1} $ that intersect the boundary of $ p_1 $. From here, we need to add in extra edges from $ \RR (A)_{p_1} $ to form a graph $ (F_1)_{p_1} $ in $ p_1 $ which passes through the puncture of each subtile in $ \RR (\PP)_p $. Note that in this example, we do not need to add in any extra edges since all punctures are already contained in $ A^\circ_p $. The graph $(F_1)_{p_1}$ appears in Figure \ref{fig:fractal_tree1}.

\begin{center}
\begin{figure}[H]
\scalebox{0.9}{
\begin{tikzpicture}[>=stealth,scale=1.1]
\node at (0,0) {\includegraphics[width=0.8\textwidth]{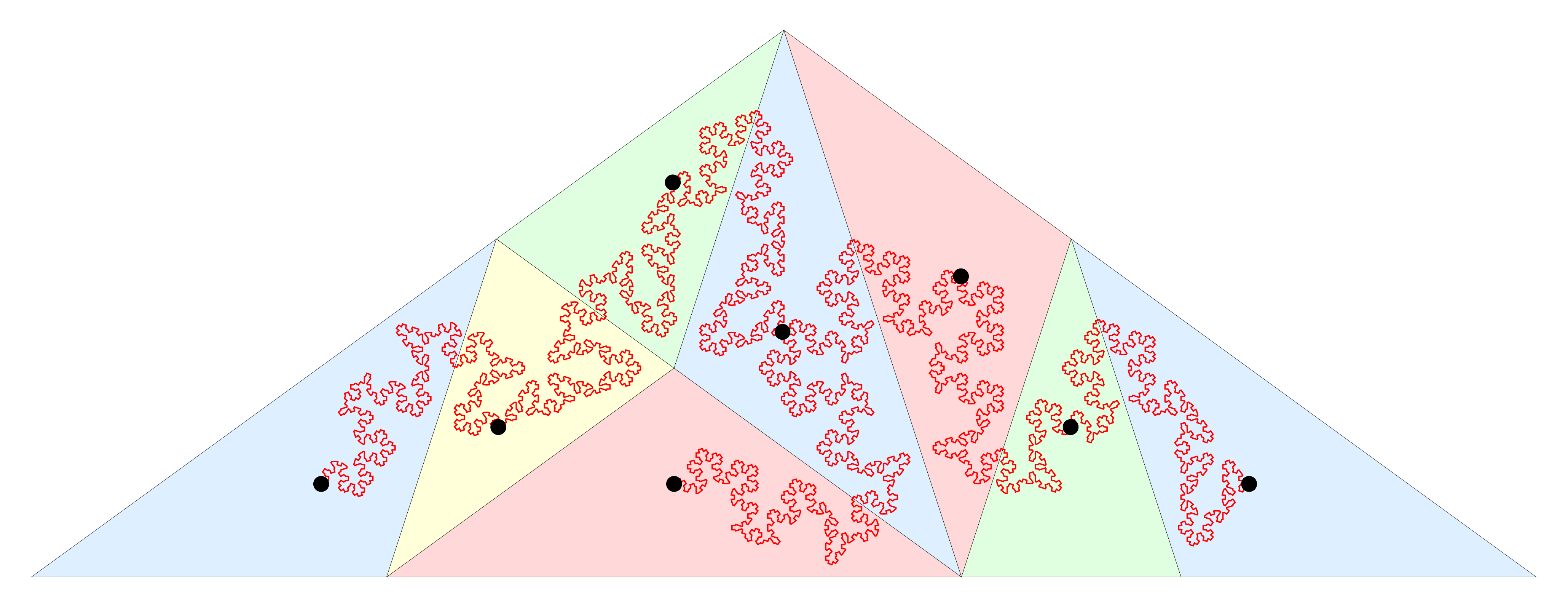}};
\end{tikzpicture}}
\caption{$(F_1)_{p_1}$ - prototile $p_1$ after removing the edges of $\RR(A)_{p_1}$ that intersect the boundary}
\label{fig:fractal_tree1}
\end{figure}
\end{center}

The second step in the algorithm, illustrated in Figure \ref{fig:fractal_tree2}, is to consider the graph $\RR(F_1)$, which will define a graph in each prototile that intersects each puncture in $\RR^2(\PP)$.
Notice, however, that for each $ q \in \PP $, $ (F_1)_q $ contains no edges whose endpoints are boundary vertices, and hence, not all pairs of punctures of the $ 2 $-subtiles in $ \RR^2 (\PP)_{p_1} $ are connected via the graph $ \RR (F_1)_{p_1} $.

\begin{center}
\begin{figure}[H]
\scalebox{0.9}{
\begin{tikzpicture}[>=stealth,scale=1.1]
\node at (0,0) {\includegraphics[width=0.8\textwidth]{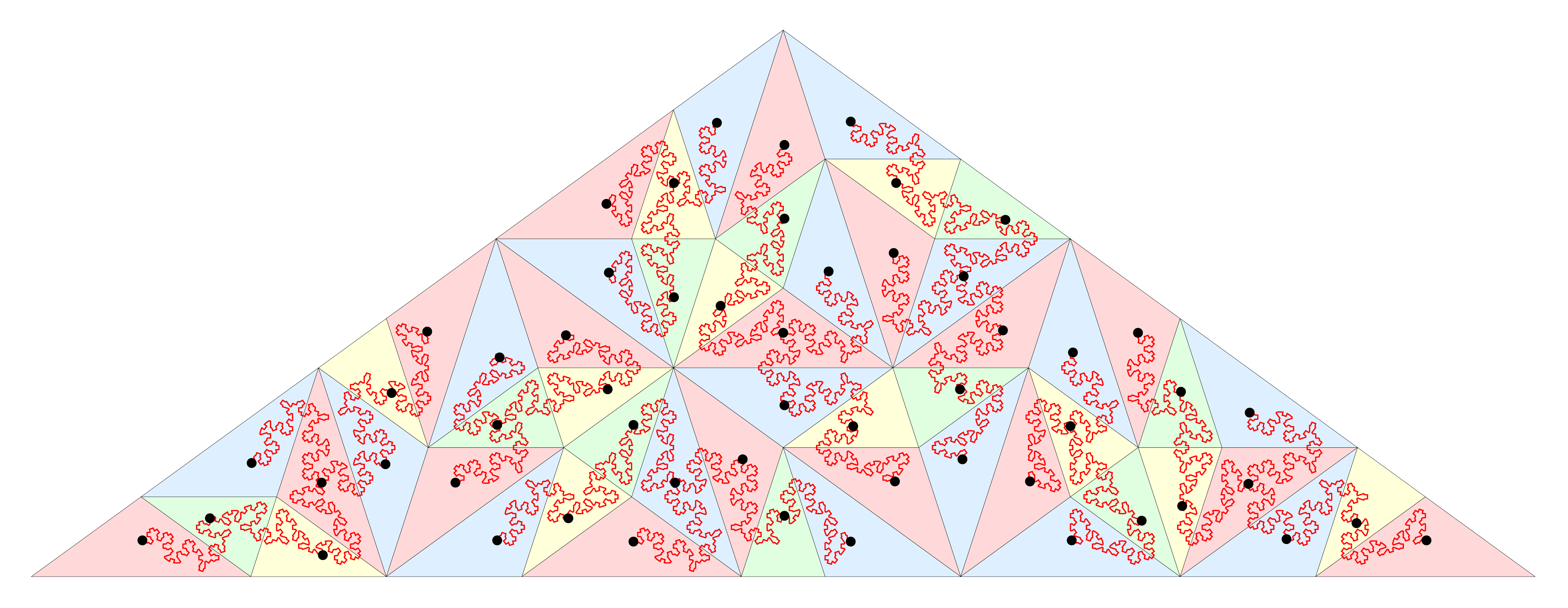}};
\end{tikzpicture}}
\caption{$\RR(F_1)_{p_1}$ - prototile $p_1$ after applying $\RR$ to the graph $F_1$}
\label{fig:fractal_tree2}
\end{figure}
\end{center}

We define $ F_2 := \RR (F_1) \cup F_1 $, so that each pair of connected graphs in $ F_1 $ are joined by additional fractal edges. We note that it is essential that we begin with a fractal graph, so that $F_1$ and $\RR(F_1)$ intersect along complete edges. In Figure \ref{fig:fractal_tree3}, we illustrate the graph $ (F_2)_{p_1} $.

\begin{center}
	\begin{figure}[H]
		\scalebox{0.9}{
			\begin{tikzpicture}[>=stealth,scale=1.1]
			\node at (0,-14) {\includegraphics[width=0.8\textwidth]{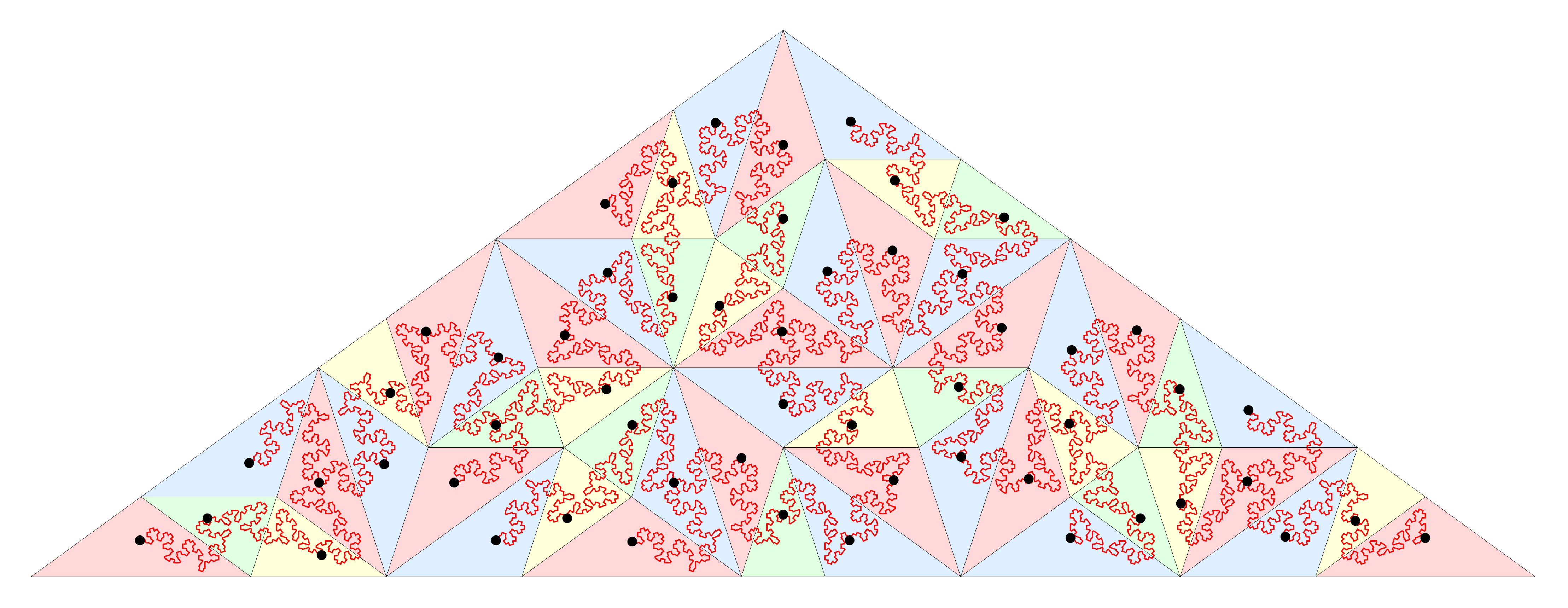}};
			\end{tikzpicture}}
		\caption{$(F_2)_{p_1} = (\RR(F_1) \cup F_1)_{p_1}$ - the graph $F_2$ restricted to prototile $p_1$}
		\label{fig:fractal_tree3}
	\end{figure}
\end{center}

Inductively, we obtain graphs $F_n$ in $ \PP $ for all $n \in \N$. For each $n \in \N$, consider $\mathfrak{F}_n=\lambda^{n}F_n$ which connects the punctures of each tile in each patch $\omega^n(p)$ for each $p \in \PP$. Finally, the infinite graph $\mathfrak{F}_{p_1}$ on tiling $T_{p_1}$, is defined to be the infinite union $\cup_{n=1}^\infty (\mathfrak{F}_n)_{p_1}$. Thus we have a fractal tree on the tiling $T_{p_1}$ that jointly extends each of the fractal trees $(\mathfrak{F}_n)_{p_1}$, and the patches $\omega^n(p_1)$.

We now assign lengths to the edges $1$--$12$ in Figure \ref{fig:fractal_edges}. The matrix $M^E$ for the recurrent pair $(G,S)$ defined in Example \ref{penrose_recurrentpair}, is a primitive $120 \times 120$ matrix. However, since the edges in Figure \ref{fig:fractal_edges} extend to the $10$-fold rotations of the $4$ prototiles, we need only consider a $12 \times 12$ matrix. The Perron-Frobenius eigenvalue is $\kappa=\frac{1}{2} (3+\sqrt{21+4\sqrt{21}}) \approx 4.6357$ with associated eigenvector 
\[
v=[0.6952, \,\,\, 1.3953, \,\,\, 1.2638, \,\,\, 0.6952, \,\,\, 1.2638, \,\,\, 1.3953, \,\,\, 1, \,\,\, 0.5452, \,\,\, 0.5686, \,\,\, 1, \,\,\, 0.5686, \,\,\, 0.5452 ]^T.
\]
For each $ 1 \leq i \leq 12 $, entry $v_i$ is the length $l(i)$ of edge $i$. To obtain the fractal distance $d_{\mathfrak{F}}(t,t')$ between two tiles $t,t' \in T_p$, we sum the lengths of the edges between the punctures $x(t)$ and $x(t')$. Thus, the reader can verify that the fractal distances between the middle pink tile in Figure \ref{fig:fractal_tree3}, and each of its three adjacent tiles is $l(2)+l(8)=1.9405$, $l(3)+l(5)=2.5275$, and $l(1)+l(4)=1.3904$ in the northwest, northeast, and south directions, respectively. 
\end{example}

For the remainder of this section, fix a prototile set $ \PP $ with a substitution rule $ \omega $.
Furthermore, for each $ p \in \PP $, let $ \mathfrak{F}_p $ be a fractal tree in $ T_p $. Write $ \kappa > 1 $ for the Perron-Frobenius eigenvalue of the associated edge substitution matrix. We explore some properties associated with the fractal tree, and make some new definitions which will aid us in the definition of a spectral triple.

\begin{definition} \label{root_of_prototile}
Let $ p \in \PP $, $ x \in \R^2 $, and $ n \in \N $. The \emph{root} of the patch $ \omega^n (p) + x $ is the tile $ p+x $. In particular, taking $ x = 0 \in \R^2 $, $p$ is the root of the patch $\omega^n(p)$ for each $ p \in \PP $, and $n \in \N$.
\end{definition}

\begin{lemma} \label{fscale}
Fix $ p \in \PP $, and suppose that $t,t',r,r' \in T_p$ are tiles in $ T_p $ such that $r$ is the root of $\omega(t)$, and $r'$ is the root of $\omega(t')$. Then, $d_{\mathfrak{F}_p}(r,r') = \kappa d_{\mathfrak{F}_p}(t,t').$
\end{lemma}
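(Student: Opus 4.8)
The plan is to exploit the self-similarity of the fractal tree that was built into the construction in Theorem~\ref{fractaltree}, together with the self-similarity of the edge-length function coming from Perron--Frobenius. First I would unwind the definition of the fractal distance: $d_{\mathfrak{F}_p}(t,t')$ is a sum $\sum_{e} \ell(e) = \sum_e v_e$ over the fractal edges $e \in E(A)$ making up the unique path in $\mathfrak{F}_p$ joining $x(t)$ to $x(t')$. Since $r$ and $r'$ are the roots of $\omega(t)$ and $\omega(t')$ respectively, the punctures satisfy $x(r) = \lambda\, x(t)$ and $x(r') = \lambda\, x(t')$ (recall every puncture is a fixed point of the substitution, so $x(\omega^n(p)) = 0$ and punctures of supertiles scale by $\lambda$). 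So I want to compare the path from $x(t)$ to $x(t')$ in $\mathfrak{F}_p$ with the path from $\lambda x(t)$ to $\lambda x(t')$ in $\mathfrak{F}_p$.

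The key structural fact is that $\mathfrak{F}_p = \bigcup_n (\mathfrak{F}_n)_p$ where $(\mathfrak{F}_n)_p = \lambda^n (F_n)_p$ and $F_n = \RR(F_{n-1}) \cup F_{n-1}$, so that $\lambda \cdot \RR$ (``apply the substitution, i.e. expand by $\lambda$ and subdivide'') maps $\mathfrak{F}_p$ into itself, sending the subgraph of $\mathfrak{F}_p$ lying in a tile $t$ onto the subgraph lying in $\omega(t)$, and sending the puncture $x(t)$ to the puncture $x(r)$ of the root $r$ of $\omega(t)$. Concretely: given the unique path $\gamma$ from $x(t)$ to $x(t')$ in $\mathfrak{F}_p$ — which lives inside some $(\mathfrak{F}_n)_p = \lambda^n(F_n)_p$, i.e. $\lambda^{-n}\gamma$ is a path in $(F_n)_p \subset \RR^n(A)$ — applying $\lambda\RR$ gives a path from $x(r)$ to $x(r')$ inside $(\mathfrak{F}_{n+1})_p$, which by uniqueness of paths in the tree $\mathfrak{F}_p$ must be the fractal path between $r$ and $r'$. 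Each fractal edge $e \in E(A)$ of $\gamma$ gets mapped by $\lambda\RR$ onto the collection of edges $\{\lambda^{-1}(f+x) : x \in D^E_{ef}\}$ — i.e. the edges of $\psi(e)$, by the very definition of the edge substitution matrix $M^E$ — so the length contributed by $e$, namely $v_e$, is replaced by $\sum_f M^E_{ef} v_f = (M^E v)_e = \kappa v_e$, using that $v$ is the Perron--Frobenius eigenvector of $M^E$ with eigenvalue $\kappa$. Summing over all edges $e$ of $\gamma$ gives $d_{\mathfrak{F}_p}(r,r') = \kappa\, d_{\mathfrak{F}_p}(t,t')$.

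The main thing to be careful about — the real content beyond bookkeeping — is the claim that applying $\lambda\RR$ to the path $\gamma$ from $x(t)$ to $x(t')$ genuinely produces the unique fractal path from $x(r)$ to $x(r')$, i.e. that no cancellation, backtracking, or leaving-and-re-entering happens, and that the image is again a simple path in the tree $\mathfrak{F}_p$. This follows because $\mathfrak{F}_p$ is a tree (Theorem~\ref{fractaltree}), $\lambda\RR$ maps $\mathfrak{F}_p$ homeomorphically onto a subtree of itself (it is a contraction composed with an expansion, injective on the relevant compact piece since distinct tiles have disjoint interiors and the tree sits inside them), and a homeomorphic image of the arc $\gamma$ joining two vertices is again an arc joining the image vertices; in a tree such an arc is the unique reduced path. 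I would also note explicitly that $\psi$ being a substitution rule on edges means $\psi(e) = \bigcup_f \bigcup_{x \in D^E_{ef}} \lambda^{-1}(f+x)$ is an edge-disjoint decomposition, so the lengths add with multiplicities exactly $M^E_{ef}$ and there is no double-counting. Everything else is a direct computation with the eigenvalue equation $M^E v = \kappa v$.
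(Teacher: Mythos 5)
Your proposal is correct and is essentially the paper's own argument: both rest on the nesting $F_{n+1}=\RR(F_n)\cup F_n$ (equivalently $\lambda\mathfrak{F}_p\subset\mathfrak{F}_p$), uniqueness of paths in the tree to identify the image of the $t$--$t'$ path with the $r$--$r'$ path, and the eigenvalue relation $M^E v=\kappa v$ to convert the edge decomposition into the factor $\kappa$. The only difference is cosmetic: the paper rescales everything down by $\lambda^{-N}$ into the prototile and compares $(F_N)_p\subset(F_{N+1})_p$, whereas you expand the path by $\lambda$ inside $\mathfrak{F}_p$.
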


\begin{proof}
Choose $N$ sufficiently large such that $\lambda^{-N} t,\lambda^{-N} t' \in \RR^N(\PP)_p$ as $ N $-subtiles. By construction, there is a unique fractal path between $ x (\lambda^{-N} t) $ and $ x (\lambda^{-N} t') $ via the tree $(F_N)_p$, as in the proof of Theorem \ref{fractaltree}. By equation \eqref{eq:induct_graph}, we have $ (F_{N+1})_p = (\RR(F_N) \cup (F_N))_p $ so that $(F_N)_p \subset (F_{N+1})_p$. Thus, the fractal path between $\lambda^{-(N+1)} r$ and $\lambda^{-(N+1)} r'$ in $\RR^{N+1}(\PP)_p$, is identical to the fractal path between $\lambda^{-N} t$ and $\lambda^{-N} t'$ in $\RR^N(\PP)_p$. Scaling appropriately, the fractal path between $r$ and $r'$ is the image under the edge digit matrix of the fractal path between $t$ and $t'$. Since the length of fractal edges were assigned by the eigenvector corresponding to the edge substitution matrix with Perron-Frobenius eigenvalue $ \kappa > 1 $, we conclude that $d_{\mathfrak{F}_p}(r,r') = \kappa d_{\mathfrak{F}_p}(t,t')$.
\end{proof}

\begin{definition} \label{radii}
For each $p \in \PP$, and $n \in \N$, the \emph{$n$th major branch radius of $p$} is the number
\[
\LL_n(p) := \max\{d_{\mathfrak{F}_p}(t,p) : t \in \omega^n(p) \setminus \{p\} \}.
\]
A tile $t \in \omega^n(p)$ satisfying $d_{\mathfrak{F}_p}(t,p) = \LL_n(p)$ is called an \emph{$n$th major leaf of $p$}. The \emph{major branch radius} is $\LL := \max\{\LL_1(p) : p \in \PP \}$. Similarly, for each $p \in \PP$ and $m,n \in \N$ such that $m > n$ the \emph{$m$th minor branch radius of $p$ over $n$} is the number
\[
\SS_{m,n}(p) = \min\{d_{\mathfrak{F}_p}(t,p) : t \in \omega^m(p) \setminus \omega^n(p) \}.
\]
If $n = 0$, we have $ \omega^0 (p) = \{p\} $, and we write $\SS_m(p) := \SS_{m,n}(p)$ for the \emph{$m$th minor branch radius of $p$}. A tile $t \in \omega^m(p) \setminus \omega^n(p)$ satisfying $d_{\mathfrak{F}_p}(t,p) = \SS_{m,n}(p)$ is called an \emph{$m$th minor leaf of $p$ over $n$}. In particular, a tile $t \in \omega^m(p) \setminus \{p\}$ satisfying $d_{\mathfrak{F}_p}(t,p) = \SS_m(p)$ is called an \emph{$m$th minor leaf of $p$}. The \emph{minor branch radius} is given by $\SS := \min\{\SS_n(p) : p \in \PP, n \in \N \}$.
\end{definition}

\begin{lemma} \label{longest}
For each $p \in \PP$, and $n \in \N$, we have $\LL_n(p) \leq \sum_{i=0}^{n-1} \kappa^i \LL$.
\end{lemma}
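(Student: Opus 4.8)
The plan is to prove the estimate by induction on $n$ with $p \in \PP$ fixed, using Lemma~\ref{fscale} to produce the extra factor of $\kappa$ at each stage. The base case $n = 1$ is immediate, since $\LL_1(p) \leq \LL$ by the definition of the major branch radius $\LL$. Note that although $\LL$ (and, below, the quantities $\LL_1(q)$) involve all prototiles, they enter the argument only as the single constant $\LL$, so the induction genuinely stays within the fixed $p$.

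For the inductive step I would assume $\LL_{n-1}(p) \leq \sum_{i=0}^{n-2}\kappa^i\LL$ and pick a tile $t \in \omega^n(p)\setminus\{p\}$ realising $d_{\mathfrak{F}_p}(t,p) = \LL_n(p)$. Writing $\omega^n(p) = \omega(\omega^{n-1}(p))$ as a union of level-one supertiles with pairwise disjoint interiors, there is a unique $u \in \omega^{n-1}(p)$ with $t \in \omega(u)$; let $r$ be the root of $\omega(u)$, which is itself a tile of $\omega^n(p) \subset T_p$ (it is the copy of the prototile sitting in $\omega(u)$, since we assume each prototile lies in its own substitute). Because $d_{\mathfrak{F}_p}$ is the geodesic metric of a tree, the triangle inequality gives
\[
\LL_n(p) = d_{\mathfrak{F}_p}(t,p) \leq d_{\mathfrak{F}_p}(t,r) + d_{\mathfrak{F}_p}(r,p),
\]
and I would bound the two terms separately. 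For the second term: $p$ is the root of $\omega(p)$ by Definition~\ref{root_of_prototile}, so applying Lemma~\ref{fscale} to the pair $u,p$ yields $d_{\mathfrak{F}_p}(r,p) = \kappa\, d_{\mathfrak{F}_p}(u,p)$; and since $u \in \omega^{n-1}(p)$ the inductive hypothesis gives $d_{\mathfrak{F}_p}(u,p) \leq \LL_{n-1}(p) \leq \sum_{i=0}^{n-2}\kappa^i\LL$, hence $d_{\mathfrak{F}_p}(r,p) \leq \sum_{i=1}^{n-1}\kappa^i\LL$. For the first term: $\omega(u)$ is a translate of $\omega(q)$, where $q$ is the prototile of which $u$ is a translate, and $r$ is the corresponding translate of the root $q$ of $\omega(q)$; the self-similarity of the fractal tree produced in Theorem~\ref{fractaltree} identifies the restriction of $\mathfrak{F}_p$ to $\omega(u)$ with that translate of $(\mathfrak{F}_1)_q$, so $d_{\mathfrak{F}_p}(t,r) = d_{\mathfrak{F}_q}(t',q)$ for the tile $t' \in \omega(q)$ corresponding to $t$, whence $d_{\mathfrak{F}_p}(t,r) \leq \LL_1(q) \leq \LL$. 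Adding the two bounds, $\LL_n(p) \leq \LL + \sum_{i=1}^{n-1}\kappa^i\LL = \sum_{i=0}^{n-1}\kappa^i\LL$, closing the induction.

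The triangle inequality and the geometric-sum bookkeeping are routine; the one step needing genuine care is the estimate $d_{\mathfrak{F}_p}(t,r) \leq \LL$, which rests on the fractal tree respecting the substitution hierarchy one level down — namely that $\mathfrak{F}_p$ restricted to a level-one supertile is a translated copy of the prototile's fractal graph $(\mathfrak{F}_1)_q$, with matching fractal-edge lengths. I would justify this by unwinding the inductive construction $F_n = \RR(F_{n-1}) \cup F_{n-1}$ together with the $\RR$-invariance of the attractor $A$ used in Theorem~\ref{fractaltree}; alternatively, one can avoid invoking it altogether by carrying through the induction the combinatorial path of $A$-edges that realises $\LL_n(p)$ and arguing directly from the Perron--Frobenius weighting of those edges.
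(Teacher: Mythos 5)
Your argument is correct, and it runs on the same machinery as the paper's proof --- induction on $n$, a triangle inequality through the root of a supertile, and Lemma~\ref{fscale} --- but the decomposition is the mirror image of the paper's. The paper writes $\omega^{k+1}(p)$ as a union of supertiles of order $k$ indexed by the tiles of $\omega(p)$, bounds the root-to-$p$ leg by $\kappa^k\LL_1(p)$ via iterated applications of Lemma~\ref{fscale}, and applies the inductive hypothesis to the leg inside the order-$k$ supertile; you instead write $\omega^n(p)$ as a union of order-one supertiles $\omega(u)$ with $u\in\omega^{n-1}(p)$, apply the inductive hypothesis to $d_{\mathfrak{F}_p}(u,p)$ (where it applies verbatim, since that distance is still measured to the root $p$ of the fixed tiling $T_p$), scale once by $\kappa$, and bound the remaining leg by $\LL$. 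The trade-off is where the self-similarity of the fractal tree has to be invoked: the paper's use of the inductive hypothesis to bound $d_{\mathfrak{F}_p}(t',r')$ tacitly identifies the restriction of $\mathfrak{F}_p$ to a translated order-$k$ supertile with the corresponding tree for that prototile, whereas your route needs this identification only at level one, and you correctly flag it and sketch its justification from $F_n=\RR(F_{n-1})\cup F_{n-1}$ and the $\RR$-invariance of $A$ (since $\mathfrak{F}_p$ is a tree, the unique path between two punctures of $\omega(u)$ lies in the embedded connected copy of $(\mathfrak{F}_1)_q$, so the edge lengths agree and $d_{\mathfrak{F}_p}(t,r)\leq\LL_1(q)\leq\LL$). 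Both routes yield the same geometric sum; yours is slightly more careful about the point the paper leaves implicit, at the cost of making the level-one identification explicit.
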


\begin{proof}
We proceed by induction. By defintion, we have $\LL = \{\LL_1(p) : p \in \PP \}$, and hence, $\LL_1(p) \leq \LL$ for all $p \in \PP$. Assume for some $ k \in \N $, we have $ \LL_k (p) \leq \sum_{i = 0}^{k-1} \kappa^i \LL $ for all $ p \in \PP $. Fix $p \in \PP$, and suppose $t$ is a first major leaf of $p$. Let $r$ be the root of $\omega^k(t)$. Since $p$ is the root of $\omega^k(p)$,
\[
d_{\mathfrak{F}_p}(r,p) = \kappa^k d_{\mathfrak{F}_p}(t,p) = \kappa^k \LL_1(p),
\]
by Lemma \ref{fscale}. Thus, if we view the patch $\omega^{k+1}(p)$ as a collection of supertiles of order $k$, the fractal distance from $p$ to the root of one of the supertiles of order $k$ is at most $\kappa^k \LL_1(p)$. Let $t'$ be a $(k+1)$th major leaf of $p$. Then $t' \in \omega^k(t'')$ for some $t'' \in \omega(p) \setminus \{p\}$. That is, $t'$ must be contained in one of the supertiles of order $k$. Let $r'$ be the root of $\omega^k(t'')$. Then, by our inductive hypothesis,
\[
d_{\mathfrak{F}_p}(t',p) \leq d_{\mathfrak{F}_p}(t',r') + d_{\mathfrak{F}_p}(r',p) \leq \sum_{i=1}^{k-1} \kappa^i \LL + \kappa^k \LL = \sum_{i=1}^{k} \kappa^i \LL. \qedhere
\]
\end{proof}

\begin{lemma} \label{shortcorona}
For each $p \in \PP$, and $m,n \in \N$ with $m > n$, we have $\SS_{m,n}(p) \geq \kappa^{n-1} \SS$.
\end{lemma}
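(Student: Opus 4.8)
The plan is to locate the tile realising $\SS_{m,n}(p)$ inside a single supertile of order $n-1$, use the self-similarity of the fractal distance (Lemma \ref{fscale}) to transfer the problem to the root of that supertile, and then bound that root's distance from $p$ by $\kappa^{n-1}\SS$ using the definition of $\SS$.

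Fix $p\in\PP$ and $m>n\ge 1$, and choose a tile $t\in\omega^m(p)\setminus\omega^n(p)$ with $d_{\mathfrak{F}_p}(t,p)=\SS_{m,n}(p)$ (the minimum is attained since $\omega^m(p)$ is finite). Write $\omega^m(p)=\bigsqcup_{s\in\omega^{m-n+1}(p)}\omega^{n-1}(s)$ as a disjoint union of supertiles of order $n-1$; then $t$ lies in exactly one of them, say $\omega^{n-1}(s)$. Since $p\subseteq\omega(p)$ gives $\omega^{n-1}(p)\subseteq\omega^n(p)$, we have $t\notin\omega^{n-1}(p)$, and disjointness of the order-$(n-1)$ supertiles forces $s\neq p$ and $p\notin\omega^{n-1}(s)$. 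In particular $s\in\omega^{m-n+1}(p)\setminus\{p\}$, so $d_{\mathfrak{F}_p}(s,p)\ge\SS_{m-n+1}(p)\ge\SS$ by the definition of $\SS$ (this is legitimate as $m-n+1\ge 1$; when $n=1$ the order-$0$ supertile is $\{t\}$, $s=t$, and the inequality reads $d_{\mathfrak{F}_p}(t,p)\ge\SS_m(p)\ge\SS$). Let $r$ be the root of $\omega^{n-1}(s)$. Applying Lemma \ref{fscale} $n-1$ times to the pair $(s,p)$, exactly as in the proof of Lemma \ref{longest} and using that $p$ is the root of $\omega^{n-1}(p)$, gives $d_{\mathfrak{F}_p}(r,p)=\kappa^{\,n-1}\,d_{\mathfrak{F}_p}(s,p)\ge\kappa^{\,n-1}\SS$.

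It remains to prove $d_{\mathfrak{F}_p}(t,p)\ge d_{\mathfrak{F}_p}(r,p)$, for then $\SS_{m,n}(p)=d_{\mathfrak{F}_p}(t,p)\ge\kappa^{n-1}\SS$, as required. Since $r\in\omega^{n-1}(s)$, this follows from the claim that the unique fractal path in $\mathfrak{F}_p$ from $t$ to $p$ passes through $r$, because then $d_{\mathfrak{F}_p}(t,p)=d_{\mathfrak{F}_p}(t,r)+d_{\mathfrak{F}_p}(r,p)\ge d_{\mathfrak{F}_p}(r,p)$. To establish the claim I would exploit the hierarchical structure of $\mathfrak{F}_p$ produced in Theorem \ref{fractaltree}: because $\mathfrak{F}_p$ is assembled from the $\RR$-invariant fractal graph $A$ through the recursion $F_k=\RR(F_{k-1})\cup F_{k-1}$, its restriction to any supertile $\omega^{k}(s')$ occurring in $T_p$ is a connected subtree contained in the interior of that supertile, and every edge of $\mathfrak{F}_p$ leaving $\omega^{k}(s')$ is attached to this subtree at the single vertex $x(r')$, where $r'$ is the root of $\omega^k(s')$. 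Here one uses the identity $\operatorname{root}(\omega^k(s'))=\operatorname{root}\!\bigl(\omega(\operatorname{root}(\omega^{k-1}(s')))\bigr)$ to see that the coarser branches of $\mathfrak{F}_p$ meet $\omega^k(s')$ at its root, and the fact that $\mathfrak{F}_p$ is a tree to see that there is only one attachment point. Applying this with $k=n-1$ and $s'=s$ yields the claim, since $p\notin\omega^{n-1}(s)$.

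The crux is this last structural fact, that the subtree of $\mathfrak{F}_p$ sitting over a supertile hangs off the rest of the tree precisely at the root of the supertile; this is exactly where the details of the construction of the fractal tree in Theorem \ref{fractaltree} (the $\RR$-invariance of $A$ and the interiorisation step producing $F_1$) are needed. The remaining ingredients — the supertile decomposition, the elementary observations that $s\neq p$ and $d_{\mathfrak{F}_p}(s,p)\ge\SS$, and the iteration of Lemma \ref{fscale} — are routine, with only the degenerate case $n=1$ (where the bound is simply $\SS_{m,1}(p)\ge\SS_m(p)\ge\SS$) requiring a separate remark.
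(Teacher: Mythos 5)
Your reductions are fine up to the last step: the decomposition $\omega^m(p)=\bigcup_{s\in\omega^{m-n+1}(p)}\omega^{n-1}(s)$, the observation that $s\neq p$ so $d_{\mathfrak{F}_p}(s,p)\ge\SS_{m-n+1}(p)\ge\SS$, and the identity $d_{\mathfrak{F}_p}(r,p)=\kappa^{n-1}d_{\mathfrak{F}_p}(s,p)$ from Lemma \ref{fscale} are all correct. The genuine gap is the structural claim you yourself flag as the crux, and it is not just unproven but false in general: the restriction of $\mathfrak{F}_p$ to an order-$(n-1)$ supertile is \emph{not} confined to the interior copy of $(\mathfrak{F}_{n-1})_q$ attached only at the root. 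The coarser edges of the tree (the inflated edges coming from the layers $F_j$ with $j$ small in the recursion $F_k=\RR(F_{k-1})\cup F_{k-1}$) pass straight through supertiles on their way elsewhere, and inside each supertile they traverse they run boundary-vertex--root-puncture--boundary-vertex, overlapping the fine copy and meeting it at the punctures of every tile they cross. Consequently a tile $t$ whose puncture lies on such a through-edge between the root $r$ of its supertile and the outside satisfies $d_{\mathfrak{F}_p}(r,p)=d_{\mathfrak{F}_p}(r,t)+d_{\mathfrak{F}_p}(t,p)$, so your key inequality $d_{\mathfrak{F}_p}(t,p)\ge d_{\mathfrak{F}_p}(r,p)$ fails: the path from $x(t)$ to $x(p)$ can exit the supertile containing $t$ by joining the outgoing coarse subedge partway, never visiting $x(r)$.

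The paper routes the estimate differently, and this is what saves it. It does not use the supertile containing $t'$ at all; it uses the order-$(n-1)$ supertiles $\omega^{n-1}(t'')$ with $t''\in\omega(p)\setminus\{p\}$, which tile $\omega^n(p)$ around the central supertile $\omega^{n-1}(p)$. Because the path \emph{starts} at $x(p)$, which is precisely the root puncture of the central supertile, uniqueness of paths in the tree forces its initial segment out of $\omega^{n-1}(p)$ to be a full coarse subedge to a boundary vertex, and because $t'\notin\omega^n(p)$ the path must then pass completely through (enter and exit) some $\omega^{n-1}(t'')$ with $t''\in\omega(p)\setminus\{p\}$; a through-passage is forced through that supertile's root puncture. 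Iterating Lemma \ref{fscale} then gives $d_{\mathfrak{F}_p}(t',p)\ge\kappa^{n-1}d_{\mathfrak{F}_p}(t'',p)\ge\kappa^{n-1}\SS_1(p)\ge\kappa^{n-1}\SS$. So the asymmetry between the special point $x(p)$ (a root at every level) and a generic puncture $x(t)$ is essential; to repair your argument you would have to replace the claim about the supertile containing $t$ by this full-traversal statement for the first-corona supertiles.
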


\begin{proof}
Let $p \in \PP$, and $m \in \N$. Suppose that $n = 1$. Then
\[
\SS_{m,1}(p) = \min\{d_{\mathfrak{F}_p}(t,p) : t \in \omega^m(p) \setminus \omega(p) \}  \geq \min\{d_{\mathfrak{F}_p}(t,p) : t \in \omega^m(p) \setminus \{p\} \} = \SS_m(p) \geq \SS.
\]
Suppose that $n > 1$. Let $t$ be a first minor leaf of $p$. Let $r$ be the root of $\omega^{n-1}(t)$, and note that $r \in \omega^n(p)$. Moreover, $ r $ satisfies $d_{\mathfrak{F}_p}(r,p) = \kappa^{n-1} \SS_1(p) \geq \kappa^{n-1} \SS$. Let $t'$ be an $m$th minor leaf of $p$ over $n$, then the fractal path between $p$ and $t'$ must pass through the root of $\omega^{n-1}(t'')$ for some $t'' \in \omega(p) \setminus \{p\}$. Thus, $ \SS_{m,n}(p) = d_{\mathfrak{F}_p}(t',p) \geq d_{\mathfrak{F}_p}(r,p) \geq \kappa^{n-1} \SS $.
\end{proof}

Given a set of prototiles $ \PP $ with a substitution rule $ \omega $, and fractal trees on $ T_p $ for each $ p \in \PP $, we now have a means of estimating the various major and minor branch radii. We will use these estimates later in the proofs of our results in Section \ref{sec:spectraltriples}, where we define spectral triples.

\section{$C^*$-algebras associated to nonperiodic tilings}

In this section, we investigate the $C^*$-algebra associated to a nonperiodic substitution tiling given by Kellendonk in \cite{Kel1}, on which we define a spectral triple. Let $\omega$ be a primitive substitution rule on a set of prototiles $\PP$, and $\phull$ be the associated discrete hull. Recall the translational equivalence relation on the discrete hull, given in Section \ref{sec:discretehull}, which may be written as:
\[
R_{punc} := \{ (T,T') \in \phull \times \phull : T' = T-x(t) \text{ for some }\ t \in T \}.
\]
We endow $R_{punc}$ with the metric given by,
\[
d_R((T,T-x(t)),(T',T'-x(t'))) := d(T,T') + |x(t)-x(t')|,
\]
where $T,T' \in \phull$, $t \in T$, and $t' \in T'$. In this topology, $R_{punc}$ is an \'etale equivalence relation. Note that the topology here is not the same as the topology inherited from the product topology on $\phull \times \phull$. In particular, in the latter topology, $R_{punc}$ is not \'etale.

Following the construction of groupoid $C^*$-algebras given by Renault in \cite{Ren}, Kellendonk constructed a $C^*$-algebra in \cite{Kel1}, which we denote by $A_{punc}$. We now outline this construction. Consider the complex vector space $C_c(R_{punc})$; the continuous functions of compact support on $R_{punc}$. Endow $C_c(R_{punc})$ with multiplication and involution given by,
\begin{gather*}
(f \cdot g)(T,T') := \sum_{T'' \in [T]} f(T,T'') g(T'',T'), \ \text{and} \\
f^*(T,T') := \overline{f(T',T)},
\end{gather*}
respectively, where $f,g \in C_c(R_{punc})$, and $(T,T') \in R_{punc}$.
Under these operations, $C_c(R_{punc})$ is a $^*$-algebra. For each $T \in \phull$, we view $ T $ as a countable collection of tiles, and consider the \emph{induced representation from the unit space}, $\pi_T : C_c(R_{punc}) \to \BB(\ell^2(T))$, given in \cite[Section 3]{Whi_rot}. Explicitly,
\[
(\pi_T(f) \xi)(t) := \sum_{t' \in T} f(T-x(t),T-x(t')) \xi(t'),
\]
where $f\in C_c(R_{punc})$, $\xi \in \ell^2(T)$, and $t \in T$. It is well known that each induced representation $\pi_T$ is non-degenerate \cite[Proposition 1.7]{Ren}. Completing $C_c(R_{punc})$ in the \emph{reduced $C^*$-algebra norm},
\[
\|f\|_{red} := \sup\{ \|\pi_T(f)\| : T \in \phull \},
\]
where $f \in C_c(R_{punc})$, we obtain Kellondonk's $C^*$-algebra $A_{punc}$.

Given Kellendonk's $C^*$-algebra $A_{punc}$, we now describe a dense spanning set, see \cite[Section 2.2]{Kel1}. Let $P$ be a patch, and let $t,t'$ be tiles in $P$. Define a function $e(P,t,t') : R_{punc} \to \C$ by
\[
e(P,t,t')(T,T') := \begin{cases}
1 & \mbox{if}\ T \in U(P,t)\ \text{and}\ T' = T-x(t') \\
0 & \mbox{otherwise}
\end{cases}.
\]
Then, $e(P,t,t')$ is a partial isometry in $C_c(R_{punc})$. The linear span of all such functions $e(P,t,t')$ forms a dense spanning set for $A_{punc}$. We note some of the relations satisfied by these functions. Let $P_1$ and $P_2$ be patches, $t_1,t_1'$ be tiles in $P_1$, and $t_2,t_2'$ be tiles in $P_2$. It is easy to check that
\[
e(P_1,t_1,t_1')^* = e(P_1,t_1',t_1).
\]
Furthermore, if there exists a patch $P$ such that $P_1 \subset P$ and $P_2 \subset P$, then
\[
e(P_1,t_1,t_1') \cdot e(P_2,t_2,t_2') = e(P_1 \cup P_2,t_1,t_2').
\]
We note that $A_{punc}$ is a unital $C^*$-algebra, with unit
\[
1_{A_{punc}} = \sum_{p \in \PP} e(\{p\},p,p).
\]

To close this section, we define another representation of $ A_{punc} $ that we require in the next section. For each $p \in \PP$, consider the map $\pi_p : C_c(R_{punc}) \to \BB(\ell^2(T_p \setminus \{p\}))$ defined by
\begin{equation} \label{eq:repn}
(\pi_p(f) \xi)(t) := \sum_{t' \in T_p \setminus \{p\}} f(T_p-x(t),T_p-x(t')) \xi(t'),
\end{equation}
where $f \in C_c(R_{punc})$, $\xi \in \ell^2(T_p \setminus \{p\})$ and $t \in T_p \setminus \{p\}$. We note that $\pi_p$ is similar to an induced representation of the unit space. In fact, the only difference here is that we are removing the tile $p$ from the tiling $T_p$. The reason we have removed this particular tile will become apparent in the next section. That $\pi_p$ is a non-degenerate representation on $C_c(R_{punc})$ is similar to the proof for the induced representation found in \cite[Proposition 1.7]{Ren}. Since the substitution on $\Omega_{punc}$ is assumed to be primitive, every finite patch appearing in any tiling in $\Omega_{punc}$ also appears in $T_p \setminus\{p\}$. Using this fact, it is routine to show that $A_{punc}$ is isometrically isomorphic to the closure of $C_c(R_{punc})$ is the operator norm of $\BB(\ell^2(T_p \setminus \{p\}))$. Thus, $\pi_p$ extends to a faithful representation of $A_{punc}$.

\section{Spectral triples on nonperiodic tilings from fractal trees} \label{sec:spectraltriples}

In this section, we define a spectral triple on Kellendonk's $C^*$-algebra $A_{punc}$, introduced in the previous section. We begin with the definition of a spectral triple on a $C^*$-algebra.

\begin{definition} \label{spectral_triple}
Let $A$ be a unital $C^*$-algebra. A \emph{spectral triple over $A$}, $(A,H,D)$, consists of a separable Hilbert space $H$,  a faithful representation $\pi : A \to \BB(H)$, and an unbounded, self-adjoint operator $D$ on $H$, satisfying the following properties:
\begin{enumerate}[(1)]
\item\label{spectral triple 1} $\{ a \in A : \pi(a) \Dom(D) \subset \Dom(D)\ \text{and}\ [D,\pi(a)] \in \BB(H) \}$ is dense in $A$; and
\item\label{spectral triple 2} the operator $(1+D^2) \inverse$ is compact on $H$.
\end{enumerate}
A spectral triple $(A,H,D)$ is \emph{$\theta$-summable} if the operator $\exp(-\alpha D^2)$ is trace class for all $\alpha > 0$.
\end{definition}

We are now able to state the main result of the paper, which makes use of the fractal trees, and the fractal distance between tiles in a tiling, as defined in Section \ref{sec:fractal_trees}.

\begin{thm} \label{nposspectrip}
Let $ \PP $ be a set of prototiles with nonperiodic substitution rule $ \omega $. For each $p \in \PP$, let $T_p$ be the self-similar tiling from Definition \ref{tilinggeneratedbyp}. Let $(G,S)$ be a recurrent pair on $\PP$ satisfying the hypotheses of Theorem \ref{fractaltree}, so that there is a fractal tree $\mathfrak{F}_p$ on $T_p$, for all $p \in \PP$.
\begin{enumerate}
\item For each $p \in \PP$, there is a $\theta$-summable spectral triple $(A_{punc},H_p,D_p)$ where $H_p:= \ell^2(T_p \setminus \{p\})$, $\pi_p:A_{punc} \to \BB(H_p)$ is defined in equation \eqref{eq:repn}, and $D_p$ is an unbounded self-adjoint operator on $H_p$ defined on the canonical basis $\{\delta_t \mid t \in T_p \setminus \{p\}\}$ by $D_p \delta_t:=\ln\big(d_{\mathfrak{F}_p}(t,p)\big)\delta_t$. \label{spec_trip_p}
\item For each $\sigma : \PP \to \{-1,1\}$ there is a $\theta$-summable spectral triple $(A_{punc},H,D_{\sigma})$ where 
\[
H:=\bigoplus_{p \in \PP} H_p, \ \ \pi:=\bigoplus_{p \in \PP} \pi_p, \ \text{ and } \ \ D_{\sigma}:= \bigoplus_{p \in \PP} \sigma(p) D_p.
\] \label{spec_trip_P}
\end{enumerate}
\end{thm}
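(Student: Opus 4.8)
The plan is to verify the two defining conditions of a spectral triple from Definition \ref{spectral_triple}, together with $\theta$-summability, for part \eqref{spec_trip_p}, and then deduce part \eqref{spec_trip_P} by a short orthogonal-sum argument. Fix $p \in \PP$. The representation $\pi_p$ is already known to be faithful (from the discussion at the end of Section 4), and $D_p$ is self-adjoint as a real diagonal multiplication operator on the orthonormal basis $\{\delta_t\}$; its domain consists of those $\xi = \sum_t c_t \delta_t$ with $\sum_t |\ln(d_{\mathfrak F_p}(t,p))|^2 |c_t|^2 < \infty$. The first thing I would check is condition \eqref{spectral triple 2} in its strong form, namely $\theta$-summability: $\exp(-\alpha D_p^2) = \sum_t d_{\mathfrak F_p}(t,p)^{-2\alpha} \delta_t \otimes \delta_t^*$ is trace class for every $\alpha > 0$, i.e. $\sum_{t \in T_p \setminus \{p\}} d_{\mathfrak F_p}(t,p)^{-2\alpha} < \infty$. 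This is where the estimates of Section \ref{sec:fractal_trees} do the work: I would stratify the tiles of $T_p \setminus \{p\}$ according to which ``annulus'' $\omega^{m}(p) \setminus \omega^{m-1}(p)$ they lie in. By Lemma \ref{shortcorona} (with $n = m-1$), every tile $t$ in this annulus satisfies $d_{\mathfrak F_p}(t,p) \geq \kappa^{m-2}\SS$, while by Notation \ref{notation:substitution} the number of tiles in $\omega^m(p)$ is at most $\NN_{\max}^m$, so the annulus contains at most $\NN_{\max}^m$ tiles. Hence $\sum_t d_{\mathfrak F_p}(t,p)^{-2\alpha} \leq \sum_{m=1}^\infty \NN_{\max}^m (\kappa^{m-2}\SS)^{-2\alpha}$, which is a geometric series that converges provided $\kappa^{2\alpha} > \NN_{\max}$. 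To handle \emph{all} $\alpha > 0$, I would instead refine the annulus estimate: since $d_{\mathfrak F_p}$ is self-similar with factor $\kappa$ (Lemma \ref{fscale}), the $\NN_n(p)$ tiles at ``generation'' $m$ decompose as $\NN_{\max}$ clusters each of which is a scaled copy of an earlier generation, and iterating Lemma \ref{fscale} shows the full sum $\sum_{t \in T_p\setminus\{p\}} d_{\mathfrak F_p}(t,p)^{-2\alpha}$ satisfies a self-similar inequality forcing convergence for every $\alpha>0$; alternatively, bound the minimal distance in $\omega^m(p)\setminus\{p\}$ below by $\kappa^{m-1}\SS$ and the count in the $m$-th shell by the number of order-$(m-1)$ supertiles, giving the geometric ratio $\NN_{\max}\kappa^{-2\alpha}$ and choosing $m$ large enough that this is $<1$ regardless of $\alpha$. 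In any case, $\theta$-summability implies $(1+D_p^2)^{-1}$ is compact, so condition \eqref{spectral triple 2} holds.

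The second and more delicate point is condition \eqref{spectral triple 1}: I must exhibit a dense $*$-subalgebra of $A_{punc}$ whose elements $a$ satisfy $\pi_p(a)\Dom(D_p) \subset \Dom(D_p)$ and $[D_p,\pi_p(a)] \in \BB(H_p)$. The natural candidate is the dense $*$-subalgebra spanned by the partial isometries $e(P,t,t')$ described in Section 4 (this span is a $*$-algebra by the relations $e(P_1,t_1,t_1')^* = e(P_1,t_1',t_1)$ and $e(P_1,t_1,t_1')e(P_2,t_2,t_2') = e(P_1\cup P_2,t_1,t_2')$ when the patches are compatible). So it suffices to prove $[D_p,\pi_p(e(P,t,t'))]$ is bounded for each such generator. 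Unwinding the definitions, $\pi_p(e(P,t,t'))$ sends $\delta_s \mapsto \delta_{s'}$ whenever $s,s' \in T_p\setminus\{p\}$ are tiles such that $P - x(t) + x(s') \subset T_p$ with $s = s' - x(t') + x(t)$ — in other words it implements a partial bijection $s' \mapsto s$ between tiles of $T_p$ that differ by the \emph{fixed} translation vector $x(t) - x(t')$, restricted to those $s'$ around which the patch $P$ sits in the correct position. On such a matrix unit $\delta_{s'}\otimes\delta_s^*$, the commutator with $D_p$ has coefficient $\ln(d_{\mathfrak F_p}(s',p)) - \ln(d_{\mathfrak F_p}(s,p)) = \ln\!\big(d_{\mathfrak F_p}(s',p)/d_{\mathfrak F_p}(s,p)\big)$, and boundedness of the commutator reduces to showing that this logarithmic ratio is bounded uniformly over all pairs $(s,s')$ occurring in the partial bijection. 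The key geometric fact is that $s$ and $s'$ differ by a fixed vector $w := x(t)-x(t')$ of bounded Euclidean length, so the fractal paths from $p$ to $s$ and from $p$ to $s'$ share a long common initial segment and diverge only near the end: more precisely, I would show that for tiles far from $p$ (say $s \notin \omega^{N}(p)$ for $N$ depending only on $|w|$ and $\PP$), the two paths agree outside the supertile of some bounded order containing both $s$ and $s'$, so $|d_{\mathfrak F_p}(s',p) - d_{\mathfrak F_p}(s,p)| \leq 2\LL_N(p) \leq 2\sum_{i=0}^{N-1}\kappa^i\LL$ is bounded by Lemma \ref{longest}, while each of $d_{\mathfrak F_p}(s,p), d_{\mathfrak F_p}(s',p) \to \infty$, so the logarithmic ratio tends to $0$; the finitely many remaining pairs (those with $s \in \omega^N(p)$) contribute a finite set of bounded coefficients. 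Summing these estimates and using that $e(P,t,t')$ has at most one nonzero entry in each row and column (so the operator-norm of a diagonal-weighted version is the sup of the weights) gives $\|[D_p,\pi_p(e(P,t,t'))]\| < \infty$. A mild point to address here: one must also check $\pi_p(a)$ preserves $\Dom(D_p)$, which follows once $[D_p,\pi_p(a)]$ is bounded on the core spanned by finitely-supported vectors together with $\pi_p(a)$ bounded.

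I expect the main obstacle to be precisely the uniform bound on the logarithmic ratio $\ln(d_{\mathfrak F_p}(s',p)/d_{\mathfrak F_p}(s,p))$ in condition \eqref{spectral triple 1}: it requires a careful combinatorial argument that two tiles at a bounded Euclidean distance lie in a common supertile of bounded order and that their fractal paths to $p$ coincide outside that supertile — this ``common prefix'' property is intuitively clear from the hierarchical structure but needs the self-similarity of $\mathfrak F_p$ (Lemma \ref{fscale}) and finite local complexity to make rigorous, and one has to rule out pathological near-boundary situations where a small Euclidean perturbation crosses many supertile boundaries. Once part \eqref{spec_trip_p} is established, part \eqref{spec_trip_P} is essentially formal: with $H = \bigoplus_{p\in\PP} H_p$, $\pi = \bigoplus_p \pi_p$ is faithful (any single $\pi_p$ already is), $D_\sigma = \bigoplus_p \sigma(p)D_p$ is self-adjoint with $D_\sigma^2 = \bigoplus_p D_p^2$ (the signs $\sigma(p) = \pm 1$ disappear on squaring), so $\exp(-\alpha D_\sigma^2) = \bigoplus_p \exp(-\alpha D_p^2)$ is trace class with $\Tr = \sum_{p\in\PP}\Tr(\exp(-\alpha D_p^2)) < \infty$ since $\PP$ is finite, giving both compact resolvent and $\theta$-summability; and for a generator $a$ of the dense $*$-subalgebra, $[D_\sigma,\pi(a)] = \bigoplus_p \sigma(p)[D_p,\pi_p(a)]$ is bounded with norm $\max_{p}\|[D_p,\pi_p(a)]\|$, so condition \eqref{spectral triple 1} is inherited. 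Thus $(A_{punc},H,D_\sigma)$ is a $\theta$-summable spectral triple, completing the proof.
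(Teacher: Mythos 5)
Your overall architecture (faithfulness of $\pi_p$, self-adjointness of the diagonal operator, bounded commutators on the spanning partial isometries $e(P,t,t')$, trace-class heat semigroup, then formal direct-sum arguments for part (2)) matches the paper, and your treatment of part (2) is essentially identical to the paper's. However, there are two genuine gaps in part (1).

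First, the $\theta$-summability computation starts from a miscalculation: since $D_p\delta_t=\ln\big(d_{\mathfrak{F}_p}(t,p)\big)\delta_t$, the eigenvalue of $\exp(-\alpha D_p^2)$ on $\delta_t$ is $\exp\big(-\alpha(\ln d_{\mathfrak{F}_p}(t,p))^2\big)=d_{\mathfrak{F}_p}(t,p)^{-\alpha\ln d_{\mathfrak{F}_p}(t,p)}$, not $d_{\mathfrak{F}_p}(t,p)^{-2\alpha}$. This is not a cosmetic slip: the number of tiles in $\omega^m(p)$ grows like a power of $\NN_{\max}$ (indeed at least like $\lambda^{2m}$) while the minimal fractal distance in the $m$-th shell grows only like $\kappa^m$, so $\sum_t d_{\mathfrak{F}_p}(t,p)^{-2\alpha}$ genuinely \emph{diverges} for small $\alpha>0$; no refinement of the shell estimate can rescue the statement you are trying to prove, and your alternative of ``choosing $m$ large enough that the ratio $\NN_{\max}\kappa^{-2\alpha}<1$'' is vacuous since that ratio does not depend on $m$. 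The correct eigenvalue decays super-polynomially in $d_{\mathfrak{F}_p}(t,p)$, and this is exactly what the paper exploits: it proves the elementary inequality $x^{\ln x}\geq c\,x^{N}$ (Lemma \ref{leftfield}), chooses $N$ with $\NN_{\max}<\kappa^{N\alpha}$, and then the shell estimate from Lemma \ref{shortcorona} gives a convergent geometric series for \emph{every} $\alpha>0$ (Proposition \ref{thetasum}).

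Second, your route to the bounded commutator rests on the claim that two tiles $s,s'$ differing by a fixed vector $w$ lie in a common supertile of order bounded in terms of $|w|$, so that their fractal paths to $p$ share a common prefix and $|d_{\mathfrak{F}_p}(s,p)-d_{\mathfrak{F}_p}(s',p)|\leq 2\LL_N(p)$. You flag the ``near-boundary'' problem yourself, and it is fatal as stated: along the hierarchy, a tile just inside $\omega^k(p)$ and a tile just outside it are Euclidean-close for arbitrarily large $k$, yet their smallest common supertile has order $>k$, and the \emph{difference} of their fractal distances to $p$ can be of order $\kappa^{k}$, hence unbounded; only the \emph{ratio} is uniformly controlled. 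The paper's mechanism is precisely the device that avoids fault lines: using the coronal radius $\corad_n(p)=\lambda^n\corad_0(p)$, Lemma \ref{ballinsupertile} and Corollary \ref{patchinsupertile} show that any occurrence of the fixed patch $P$ in $T_p$ lies either in $\omega^{N+1}(p)$ or in an annulus $\omega^{k+2}(p)\setminus\omega^{k}(p)$ with $k\geq N$, where $N$ depends only on the size of $P$; then Lemmas \ref{longest} and \ref{shortcorona} bound the ratio of any two distances to $p$ within such an annulus by $\kappa^3\LL/\big(\SS(\kappa-1)\big)$, giving the uniform bound $\big|\ln\big(d_{\mathfrak{F}_p}(s,p)/d_{\mathfrak{F}_p}(t'',p)\big)\big|\leq M$ of Lemma \ref{allofwork} and Proposition \ref{condition_3a}. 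Without an argument of this kind (a ratio bound uniform over all patch occurrences, rather than a difference bound via common prefixes), your proof of condition \eqref{spectral triple 1} is incomplete.
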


The remainder of the paper will be dedicated to proving Theorem \ref{nposspectrip}. To this end, we fix a set of prototiles $ \PP $, a nonperiodic substitution rule $ \omega $ on $ \PP $ with scaling factor $ \lambda > 1 $, and a fractal tree $ \mathfrak{F}_p $ in $ T_p $ for each $ p \in \PP $, with $ \kappa > 1 $ corresponding to the Perron-Frobenius eigenvalue of the edge substitution matrix. We note that the spectral triples constructed in \eqref{spec_trip_p} of Theorem \ref{nposspectrip}, are bounded below, and therefore, $D_p$ may be taken to be a positive operator (by suitably scaling the Perron-Frobenius eigenvector so that its minimal entry is greater than $0.5$). This is the primary reason for the more sophisticated spectral triple appearing in \eqref{spec_trip_P} of Theorem \ref{nposspectrip}.

We first concentrate on part \eqref{spec_trip_p} of Theorem \ref{nposspectrip}. Fix $p\in\PP$, and consider the $C^*$-algebra $A_{punc}$, the Hilbert space $ H_p = \ell^2 (T_p \setminus \{p\}) $, and the faithful representation $\pi_p $ of $ A_{punc} $ on $ H_p $ as defined in equation \eqref{eq:repn}, which we saw in the previous section. For $t,t' \in T_p$, recall the fractal distance $d _{\mathfrak{F}_p}(t,t')$ defined in Definition \ref{defn:fractal_distance}. We define $D_p$ on the canonical basis $ \{\delta_t : t \in T_p \setminus \{p\}\} $ of $H_p$ by
\[
D_p \delta_t := \ln(d_{\mathfrak{F}_p}(t,p)) \delta_t,
\]
where $t \in T_p \setminus \{p\}$, and we extend $ D_p $ by linearity to $\newspan\{\delta_t : t \in T_p \setminus \{p\}\}$. Since $p \not \in T_p \setminus \{p\}$, we have $d_{\mathfrak{F}_p}(t,p) \neq 0$ for all $t \in T_p \setminus \{p\}$, and hence, $\ln(d_{\mathfrak{F}_p}(t,p))$ is well-defined. Since $D_p \delta_t \in H_p$ for each $t \in T_p \setminus \{p\}$, $D_p$ is a densely defined operator on $H_p$. Furthermore, $D_p$ is an unbounded operator. To see this, fix a tile $t \in T_p \setminus \{p\}$, and $n \in \N$. Choose $N \in \N$ such that $d_{\mathfrak{F}_p}(t,p) \kappa^N > e^n$, and let $t'$ be the root of $\omega^N(t)$. Since $p$ is the root of $\omega^N(p)$, Lemma \ref{fscale} implies
\[
\ln(d_{\mathfrak{F}_p}(t',p)) = \ln(\kappa^N d_{\mathfrak{F}_p}(t,p)) > \ln(e^n) = n.
\]
Then, considering the operator norm of $ D_p $, we obtain
\begin{align*}
\|D_p\| &= \sup\{\|D_p \xi\| : \xi \in \Dom(D_p), \|\xi\| \leq 1 \} \geq \|D_p \delta_{t'}\| = \|\ln(d_{\mathfrak{F}_p}(t',p)) \delta_{t'}\| = \ln(d_{\mathfrak{F}_p}(t',p)) > n,
\end{align*}
so that $D_p$ is unbounded. Finally, we show that $D_p$ is self-adjoint. That $D_p$ is symmetric follows immediately from the definition of $D_p$ on $H_p$. To see that $D_p$ is self-adjoint, consider
\[
\Dom(D_p^*) = \{\eta \in H_p : \xi \mapsto \langle D_p \xi,\eta \rangle\ \text{from}\ \Dom(D_p) \to \C\ \text{is bounded}\}.
\]
Since $D_p$ is symmetric, we have $\Dom(D_p) \subset \Dom(D_p^*)$. We show that $\Dom(D_p^*) \subset \Dom(D_p)$. Let $\eta \in \Dom(D_p^*)$. Then the map $\xi \mapsto \langle D_p \xi,\eta \rangle$ from $\Dom(D_p)$ to $\C$ is bounded. By the Riesz Representation Theorem, there exists $\varphi \in H_p$ such that $\langle D_p \xi,\eta \rangle = \langle \xi,\varphi \rangle$ for all $\xi \in \Dom(D_p)$. For each $ t \in T_p \setminus \{p\} $, we have $ \delta_t \in \Dom (D_p) $, and hence,
\[
\langle D_p \delta_t,\eta \rangle = \langle \ln(d_{\mathfrak{F}_p}(t,p)) \delta_t,\eta \rangle = \ln(d_{\mathfrak{F}_p}(t,p)) \sum_{t' \in T_p \setminus \{p\}} \overline{\delta_t(t')} \eta(t') = \ln(d_{\mathfrak{F}_p}(t,p)) \eta(t).
\]
Furthermore, $ \langle \delta_t ,\varphi \rangle = \varphi (t) $, so that $\varphi(t) = \ln(d_{\mathfrak{F}_p}(t,p)) \eta(t)$. Thus, we obtain,
\[
\sum_{t \in T_p \setminus \{p\}} \ln(d_{\mathfrak{F}_p}(t,p))^2 |\eta(t)|^2 = \sum_{t \in T_p \setminus \{p\}} |\ln(d_{\mathfrak{F}_p}(t,p)) \eta(t)|^2 = \sum_{t \in T_p \setminus \{p\}} |\varphi(t)|^2 < \infty,
\]
so that $\eta \in \Dom(D_p)$. Thus, $\Dom(D_p^*) \subset \Dom(D_p)$, and hence, $D_p$ is self-adjoint. Given our unbounded and self-adjoint operator $D_p$ on $H_p$, we show that $(A_{punc},H_p,D_p)$ is a $ \theta $-summable spectral triple. To this end, we show that conditions \eqref{spectral triple 1} and \eqref{spectral triple 2} in Definition \ref{spectral_triple} are satisfied.

\begin{prop} \label{condition_3a}
Let $P$ be a patch in a tiling $T \in \Omega _{punc}$. Then, for all $t,t' \in P$,
\[
[D_p,\pi_p(e(P,t,t'))] \in \BB(H_p).
\]
\end{prop}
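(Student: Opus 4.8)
The plan is to show that the commutator $[D_p,\pi_p(e(P,t,t'))]$ is bounded by first understanding how $\pi_p(e(P,t,t'))$ acts on the canonical basis, and then estimating the resulting weights against the fractal distance. Recall that $e(P,t,t')$ is a partial isometry; examining the defining formula for $\pi_p$ in equation \eqref{eq:repn}, one sees that $\pi_p(e(P,t,t'))\delta_s$ is nonzero only when the patch $P-x(t')$ sits inside $T_p$ at the tile $s$ (i.e. $T_p - x(s) \in U(P,t')$), in which case it equals $\delta_{s'}$ where $s'$ is the tile obtained from $s$ by the translation carrying $t'$ to $t$ inside the embedded copy of $P$. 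In other words, $\pi_p(e(P,t,t'))$ is a partial shift on basis vectors: it sends $\delta_s \mapsto \delta_{s'}$ for those $s$ admitting the pattern $P$ around them (positioned at $t'$), and $0$ otherwise. The key geometric point is that $s$ and $s'$ always differ by the \emph{same} vector $x(t)-x(t')$, independent of $s$; so $s$ and $s'$ lie in a common translate of the patch $P$.

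Next I would compute the commutator on a basis vector. For $\delta_s$ with $\pi_p(e(P,t,t'))\delta_s = \delta_{s'}$, we get
\[
[D_p,\pi_p(e(P,t,t'))]\delta_s = \big(\ln d_{\mathfrak{F}_p}(s',p) - \ln d_{\mathfrak{F}_p}(s,p)\big)\delta_{s'},
\]
and the commutator vanishes on those $\delta_s$ where the action is zero (one should double-check the edge cases where $s = p$ or $s' = p$, which cannot arise since $p \notin T_p\setminus\{p\}$ and the representation is on $T_p \setminus \{p\}$; also the case where $s$ admits the pattern but $s'$ would be $p$ needs a brief remark — either it does not occur, or the corresponding term is simply dropped). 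Since the commutator is diagonal-to-shifted on basis vectors with pairwise orthogonal images, to show boundedness it suffices to bound
\[
\big|\ln d_{\mathfrak{F}_p}(s',p) - \ln d_{\mathfrak{F}_p}(s,p)\big| = \Big|\ln \frac{d_{\mathfrak{F}_p}(s',p)}{d_{\mathfrak{F}_p}(s,p)}\Big|
\]
uniformly over all valid $s$.

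The crux of the argument — and the step I expect to be the main obstacle — is this uniform bound on the log-ratio of fractal distances. The intuition is that $d_{\mathfrak{F}_p}(s,p)$ and $d_{\mathfrak{F}_p}(s',p)$ differ only by a ``local correction'' near $s$, since $s$ and $s'$ are a bounded fractal-distance apart (both lie in a translate of the fixed patch $P$, so $d_{\mathfrak{F}_p}(s,s')$ is bounded by a constant $C_P$ depending only on $P$ — this itself requires a small argument, perhaps via Lemma \ref{fscale} applied to the finitely many relative positions of $t,t'$ in $P$, lifted through enough inflations). By the triangle inequality for the fractal tree metric, $|d_{\mathfrak{F}_p}(s',p) - d_{\mathfrak{F}_p}(s,p)| \le d_{\mathfrak{F}_p}(s,s') \le C_P$. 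To turn an additive bound into a multiplicative (log) bound, I would use a lower bound on $d_{\mathfrak{F}_p}(s,p)$ itself: the minor branch radius estimates (Lemma \ref{shortcorona}, and the constant $\SS$) guarantee $d_{\mathfrak{F}_p}(s,p) \ge \SS > 0$ for every $s \in T_p \setminus\{p\}$. Hence
\[
\Big|\ln \frac{d_{\mathfrak{F}_p}(s',p)}{d_{\mathfrak{F}_p}(s,p)}\Big| \le \ln\Big(1 + \frac{C_P}{d_{\mathfrak{F}_p}(s,p)}\Big) \le \ln\Big(1 + \frac{C_P}{\SS}\Big),
\]
which is a finite constant independent of $s$. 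This shows $\|[D_p,\pi_p(e(P,t,t'))]\| \le \ln(1 + C_P/\SS) < \infty$, completing the proof. The delicate points to nail down are: (a) that $d_{\mathfrak{F}_p}(s,s')$ is genuinely bounded uniformly in $s$ — one needs that the fractal path between $s$ and $s'$ stays within (a bounded neighbourhood of) the patch-translate containing them, which should follow from the tree structure of $\mathfrak{F}_p$ and finite local complexity; and (b) handling the tile $p$ correctly in all the case distinctions, using the fact that $\pi_p$ lives on $T_p \setminus \{p\}$ by design.
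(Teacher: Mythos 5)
Your reduction of the problem is the same as the paper's: you identify $\pi_p(e(P,t,t'))$ as a partial shift on the basis $\{\delta_s\}$, compute the commutator as a weighted shift with weight $\ln\bigl(d_{\mathfrak{F}_p}(s',p)/d_{\mathfrak{F}_p}(s,p)\bigr)$, note the orthogonality of the images and the harmless $s'=p$ case, and conclude that it suffices to bound the log-ratio uniformly over all occurrences of $P$ in $T_p$. Up to that point everything is fine and matches the paper's argument via equation \eqref{representation_delta}.

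The gap is exactly at the step you flag as the crux, and it is not a small one: the claim that $d_{\mathfrak{F}_p}(s,s')\le C_P$ uniformly over all occurrences of $P$ is unjustified and, for these hierarchical fractal trees, should be expected to be \emph{false}. The tree metric is not translation invariant and does not respect Euclidean locality: two tiles $s,s'$ lying in a common translate of $P$ (even Euclidean-adjacent tiles) can sit in different supertiles of arbitrarily high order $k$, and since $\mathfrak{F}_p$ is a tree, the subtrees carried by two adjacent order-$k$ supertiles are joined only through the higher levels of the hierarchy (edges whose fractal lengths scale like $\kappa^{k}$ by Lemma \ref{fscale}); the unique tree path between $x(s)$ and $x(s')$ must climb through these, so $d_{\mathfrak{F}_p}(s,s')$ grows without bound as $k\to\infty$. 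Your suggested repair via Lemma \ref{fscale} and the finitely many relative positions of $t,t'$ in $P$ does not help, because inflating scales distances up by $\kappa^n$ rather than giving a uniform bound, and occurrences of the same patch at different depths of the hierarchy have genuinely different tree geometry. Consequently the additive estimate $|d_{\mathfrak{F}_p}(s',p)-d_{\mathfrak{F}_p}(s,p)|\le C_P$ cannot be the route. The paper avoids this by bounding the \emph{ratio} directly: Corollary \ref{patchinsupertile} (via Lemma \ref{ballinsupertile}, where the threshold $N$ depends only on the size of $P$, not its position) places each occurrence of $P$ either in the central patch $\omega^{N+1}(p)$ or in an annulus $\omega^{k+2}(p)\setminus\omega^{k}(p)$ with $k\ge N$, and then Lemmas \ref{longest} and \ref{shortcorona} show that in the annular case both $d_{\mathfrak{F}_p}(s,p)$ and $d_{\mathfrak{F}_p}(s',p)$ are trapped between $\kappa^{k-1}\SS$ and $\sum_{i=0}^{k+1}\kappa^{i}\LL$, so the ratio is bounded by $\kappa^{3}\LL/(\SS(\kappa-1))$ uniformly in $k$ even though the difference of the two distances (and $d_{\mathfrak{F}_p}(s,s')$ itself) may blow up. To fix your proof you would need to replace the additive step by this comparability-of-scales argument, i.e.\ essentially Lemma \ref{allofwork}.
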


Proposition \ref{condition_3a} implies condition \eqref{spectral triple 1} in Definition \ref{spectral_triple}, since the linear span of functions $e(P,t,t')$ is dense in $A_{punc}$. We require a sequence of lemmas before arriving at the proof of Proposition \ref{condition_3a}. Given a patch $P$ in a tiling $T \in \Omega _{punc}$, we partition $T_p$ in such a way that we confine where $P$ can appear in $T_p$. The following lemma will help us to do this, and the corollary which follows describes what sort of control we have gained. First, we need the following definition.

\begin{definition}
For each $p \in \PP$ and $n \in \N$, the \emph{$n$th coronal radius of $p$} is the number:
\[
\corad_n(p) := \inf\{|x-y| : x \in \supp(\omega^n(p))\ \text{and}\ y \in \partial \supp(\omega^{n+1}(p)) \}.
\]
\end{definition}

\begin{remark}
	We note that for each $p \in \PP$, and $n \in \N$, we have $\corad_n(p) = \lambda^n \corad_0(p)$.
\end{remark}

\begin{lemma} \label{ballinsupertile}
Fix $p \in \PP$, and $r > 0$. There exists $N \in \N$ sufficiently large such that for all $x \in \R^2$, either $B(x,r) \subset\supp(\omega^{N+1}(p))$, or $B(x,r) \subset \supp(\omega^{k+2}(p)) \setminus \supp(\omega^k(p))$ for some $k \geq N$.
\end{lemma}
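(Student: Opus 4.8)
The statement asserts a uniform "inscribed ball" property: for any fixed radius $r$, there is a scale $N$ past which every ball of radius $r$ sits either inside a single supertile of order $N+1$, or inside a two-level annular region $\supp(\omega^{k+2}(p)) \setminus \supp(\omega^k(p))$ for some $k \geq N$. The plan is to exploit the self-similarity $\corad_n(p) = \lambda^n \corad_0(p)$ together with the nested exhaustion $\supp(\omega^n(p)) \subset \supp(\omega^{n+1}(p))$ of $\R^2$ by supertile supports, so that the coronal radii grow without bound and eventually dominate any fixed $r$.

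First I would choose $N \in \N$ large enough that $\corad_k(p) = \lambda^k \corad_0(p) > 2r$ for all $k \geq N$; this is possible since $\lambda > 1$ and (by primitivity, via the construction of $T_p$ as $\cup_n \omega^n(p)$ filling $\R^2$) we have $\corad_0(p) > 0$. Now fix $x \in \R^2$ and consider the center point $x$. Since $\bigcup_{n \in \N} \supp(\omega^n(p)) = \R^2$ and these supports are nested, there is a well-defined smallest index $j$ with $x \in \supp(\omega^{j+1}(p))$; equivalently either $j+1$ is minimal with $x \in \supp(\omega^{j+1}(p))$, or $x \in \supp(\omega^{N+1}(p))$ already (in which case we are in the first alternative after possibly enlarging: if $x \in \supp(\omega^{N+1}(p))$ and $B(x,r)$ pokes outside, then $x$ is within $r$ of $\partial\supp(\omega^{N+1}(p))$, but then $x \notin \supp(\omega^{N-1}(p))$ by the coronal bound at level $N-1$... ). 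Let me instead phrase it cleanly: pick the minimal $m$ with $B(x,r) \subset \supp(\omega^{m}(p))$, which exists by nestedness and exhaustion. If $m \leq N+1$, then $B(x,r) \subset \supp(\omega^{N+1}(p))$ by monotonicity, giving the first alternative. If $m \geq N+2$, set $k := m-2 \geq N$. Then $B(x,r) \subset \supp(\omega^{k+2}(p))$ by definition of $m$. It remains to check $B(x,r) \cap \supp(\omega^k(p)) = \varnothing$: by minimality of $m$, $B(x,r) \not\subset \supp(\omega^{m-1}(p)) = \supp(\omega^{k+1}(p))$, so there is a point $z \in B(x,r)$ with $z \notin \supp(\omega^{k+1}(p))$, hence $\dist(z, \supp(\omega^k(p))) \geq \corad_k(p) > 2r$; since every point of $B(x,r)$ is within $2r$ of $z$, no point of $B(x,r)$ lies in $\supp(\omega^k(p))$. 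This yields $B(x,r) \subset \supp(\omega^{k+2}(p)) \setminus \supp(\omega^k(p))$, the second alternative.

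The main subtlety — and the step I would be most careful about — is the "minimal $m$" argument and whether $B(x,r)$ being contained in some $\supp(\omega^m(p))$ is guaranteed: this requires that the exhaustion $\bigcup_n \supp(\omega^n(p)) = \R^2$ is by a nested increasing sequence of closed sets whose union contains any bounded set, which follows from $p \subset \omega(p)$ (our standing assumption, so that $\supp(\omega^n(p))$ is increasing) and from $\supp(\omega^n(p)) = \lambda^n \supp(p)$ eventually swallowing $B(x,r)$ once $\lambda^n$ is large enough relative to $|x|$ and $r$ — but note $N$ must be chosen uniformly in $x$, so the index $m$ genuinely depends on $x$; this is fine since $N$ is fixed first and only the \emph{alternative} (which $k \geq N$, or the first case) depends on $x$. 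The other point worth stating carefully is the triangle-inequality estimate $\dist(w,\supp(\omega^k(p))) \geq \corad_k(p) - 2r > 0$ for $w \in B(x,r)$, which is where the choice $\corad_k(p) > 2r$ is used; one could also use $> r$ with a slightly more careful argument about $\partial\supp$, but $2r$ makes it immediate. I would present the argument in this order: (1) fix $N$ via the coronal growth; (2) given $x$, define $m$ minimal with $B(x,r) \subset \supp(\omega^m(p))$; (3) case $m \le N+1$; (4) case $m \ge N+2$, set $k = m-2$, and verify disjointness from $\supp(\omega^k(p))$ using minimality and the coronal radius bound.
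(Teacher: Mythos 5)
Your proof is correct and follows essentially the same route as the paper's: fix $N$ with $\lambda^N\corad_0(p) > 2r$, then use the growth $\corad_k(p) = \lambda^k\corad_0(p)$ together with a diameter-$2r$ triangle-inequality estimate to trap $B(x,r)$ between the levels $k$ and $k+2$. The only difference is organizational — you case on the minimal $m$ with $B(x,r) \subset \supp(\omega^{m}(p))$, while the paper cases on whether $B(x,r)$ meets $\supp(\omega^{N+1}(p))$ or not — but both reduce to the same coronal-radius estimates.
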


\begin{proof}
Let $x \in \R^2$. We may choose $N \in \N$ sufficiently large such that $\lambda^N \corad_0(p) > 2r$, since $\lambda > 1$. If $B(x,r) \subset \supp(\omega^{N+1}(p))$, then we are done. Otherwise, we either have,
\begin{enumerate}[(1)]
\item $B(x,r) \cap \supp(\omega^{N+1}(p)) \neq \varnothing$; or
\item $B(x,r) \cap \supp(\omega^{N+1}(p)) = \varnothing$.
\end{enumerate}

If situation (1) occurs, $B(x,r) \cap \partial \supp(\omega^{N+1}(p)) \neq \varnothing$. Since $\corad_N(p) > 2r$, we have $ B(x,r) \cap \supp(\omega^N(p)) = \varnothing $. Since $\corad_{N+1}(p) = \lambda \corad_N(p) > 2r$, we have $ B(x,r) \subset \supp(\omega^{N+2}(p)) $.

If situation (2) occurs, choose $k > N$ such that $B(x,r) \cap \supp(\omega^{k+1}(p)) \neq \varnothing$, and $B(x,r) \cap \supp(\omega^k(p)) = \varnothing$. Then, $\corad_{k+1}(p) = \lambda^{k-N} \corad_{N+1}(p) > 2r$ implies $B(x,r) \subset \supp(\omega^{k+2}(p))$.

In either case, there exists $k \geq N$ such that $B(x,r) \subset \supp(\omega^{k+2}(p)) \setminus \supp(\omega^k(p))$.
\end{proof}

\begin{cor} \label{patchinsupertile}
Let $P$ be a patch in $T_p$ for some $p \in \PP$. There exists $N \in \N$ sufficiently large such that either $P \subset \omega^{N+1}(p)$, or $P \subset \omega^{k+2}(p) \setminus \omega^k(p)$ for some $k \geq N$.
\end{cor}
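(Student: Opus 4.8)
The plan is to deduce Corollary \ref{patchinsupertile} directly from Lemma \ref{ballinsupertile} by enclosing the patch $P$ in a sufficiently large ball and then transferring the containment statements about balls and supports of supertiles back to statements about patches. First I would fix $p \in \PP$ and a patch $P$ in $T_p$, and observe that since $P$ is a finite collection of tiles, its support $\supp(P)$ is a compact subset of $\R^2$, so there exists $x \in \R^2$ and $r > 0$ with $\supp(P) \subset B(x,r)$; for concreteness one may take $x$ to be the puncture of some tile of $P$ and $r = \diam(\supp(P)) + 1$, say. Apply Lemma \ref{ballinsupertile} to this $p$ and $r$ to obtain the corresponding $N \in \N$.

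Next I would split into the two cases furnished by the lemma. In the first case, $B(x,r) \subset \supp(\omega^{N+1}(p))$, so $\supp(P) \subset \supp(\omega^{N+1}(p))$; since both $P$ and $\omega^{N+1}(p)$ are patches of the tiling $T_p$ (recall $\omega^{N+1}(p) \subset T_p$ by construction of $T_p$), and tiles of a tiling only overlap on their boundaries, the containment of supports forces every tile of $P$ to be a tile of $\omega^{N+1}(p)$, i.e.\ $P \subset \omega^{N+1}(p)$. In the second case, there is $k \geq N$ with $B(x,r) \subset \supp(\omega^{k+2}(p)) \setminus \supp(\omega^k(p))$, hence $\supp(P) \subset \supp(\omega^{k+2}(p))$ and $\supp(P) \cap \interior(\supp(\omega^k(p))) = \varnothing$. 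The first of these gives $P \subset \omega^{k+2}(p)$ exactly as in case one, and the second implies no tile of $P$ lies in $\omega^k(p)$ (a tile of $\omega^k(p)$ has its interior inside $\interior(\supp(\omega^k(p)))$, which is disjoint from $\supp(P)$), so $P \subset \omega^{k+2}(p) \setminus \omega^k(p)$, as desired.

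The only subtlety — and the one step deserving a careful sentence rather than a wave of the hand — is the passage from ``$\supp(P) \subset \supp(Q)$ for patches $P, Q$ of the same tiling'' to ``$P \subset Q$''. This uses finite local complexity and the tiling axioms: each tile of $P$ has nonempty interior contained in $\supp(P) \subset \supp(Q) = \bigcup_{t \in Q} \supp(t)$, and since the interiors of distinct tiles of $T_p$ are disjoint, a point in the interior of a tile $s$ of $P$ lies in the interior of exactly one tile of $T_p$, which must therefore be a tile of $Q$ equal to $s$. There is no real obstacle here; the argument is essentially bookkeeping, and the one thing to get right is to phrase the overlap argument in terms of interiors so that boundary-sharing does not cause trouble.
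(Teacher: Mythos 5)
Your proposal is correct and is essentially the argument the paper leaves implicit: the corollary is stated as an immediate consequence of Lemma \ref{ballinsupertile}, obtained exactly as you do by enclosing $\supp(P)$ in a ball, applying the lemma, and converting containment of supports into containment of patches. Your careful handling of the support-to-patch step (using that tiles are closures of their interiors and that distinct tiles of $T_p$ have disjoint interiors) is the right way to justify the bookkeeping the paper takes for granted.
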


For the following lemma, we recall the notation used Definition \ref{radii}.

\begin{lemma} \label{allofwork}
Let $P \subset T_p \setminus \{p\}$ be a patch in $T_p$, and $t,t' \in P$. There exists $N \in \N$ such that
\begin{equation}\label{inequality frac dist}
0 < \frac{d_{\mathfrak{F}_p}(t,p)}{d_{\mathfrak{F}_p}(t',p)} \leq \max \left\{ \frac{\LL_{N+1}(p)}{\SS},\frac{\kappa^3 \LL}{\SS(\kappa-1)} \right\}.
\end{equation}
\end{lemma}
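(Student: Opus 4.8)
The plan is to apply Corollary \ref{patchinsupertile} to confine $P$, and then bound the numerator $d_{\mathfrak{F}_p}(t,p)$ from above and the denominator $d_{\mathfrak{F}_p}(t',p)$ from below, using the estimates in Lemmas \ref{longest} and \ref{shortcorona}. First I would note that positivity is immediate: since $P \subset T_p \setminus \{p\}$, neither $t$ nor $t'$ equals $p$, so both fractal distances are strictly positive, and the ratio is well-defined and positive. For the upper bound, Corollary \ref{patchinsupertile} gives $N \in \N$ such that either $P \subset \omega^{N+1}(p)$, or $P \subset \omega^{k+2}(p) \setminus \omega^k(p)$ for some $k \geq N$. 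I would split into these two cases.

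In the first case, $P \subset \omega^{N+1}(p)$, so $t \in \omega^{N+1}(p) \setminus \{p\}$, and by the definition of the $(N+1)$th major branch radius, $d_{\mathfrak{F}_p}(t,p) \leq \LL_{N+1}(p)$. For the denominator, $t' \in \omega^{N+1}(p) \setminus \{p\}$ as well, so $d_{\mathfrak{F}_p}(t',p) \geq \SS_{N+1}(p) \geq \SS$ by Definition \ref{radii} (the minor branch radius $\SS$ is the global minimum of all $\SS_n(p)$). Hence the ratio is at most $\LL_{N+1}(p)/\SS$, which is the first term in the maximum.

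In the second case, $P \subset \omega^{k+2}(p) \setminus \omega^k(p)$ for some $k \geq N$. Then $t \in \omega^{k+2}(p) \setminus \{p\}$, so by Lemma \ref{longest}, $d_{\mathfrak{F}_p}(t,p) \leq \LL_{k+2}(p) \leq \sum_{i=0}^{k+1} \kappa^i \LL \leq \frac{\kappa^{k+2}}{\kappa - 1} \LL$. For the denominator, $t' \in \omega^{k+2}(p) \setminus \omega^k(p)$, so by Lemma \ref{shortcorona} with $m = k+2$ and $n = k$, we get $d_{\mathfrak{F}_p}(t',p) \geq \SS_{k+2,k}(p) \geq \kappa^{k-1}\SS$. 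Dividing these two estimates, the powers of $\kappa$ combine to give $d_{\mathfrak{F}_p}(t,p)/d_{\mathfrak{F}_p}(t',p) \leq \frac{\kappa^{k+2}\LL}{(\kappa-1)\kappa^{k-1}\SS} = \frac{\kappa^3 \LL}{\SS(\kappa - 1)}$, which is the second term in the maximum. Taking the maximum over the two cases completes the proof. The only mild subtlety is making sure the geometric-sum estimate $\sum_{i=0}^{k+1}\kappa^i \leq \kappa^{k+2}/(\kappa-1)$ is applied with the right index (Lemma \ref{longest} gives $\LL_n(p) \leq \sum_{i=0}^{n-1}\kappa^i\LL$, so with $n = k+2$ the sum runs to $i = k+1$), and that $N$ as produced by Corollary \ref{patchinsupertile} is the same $N$ appearing in the first term of the bound; there is no real obstacle, just bookkeeping with the indices.
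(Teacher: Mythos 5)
Your proposal is correct and follows essentially the same route as the paper: invoke Corollary \ref{patchinsupertile}, then in the case $P \subset \omega^{N+1}(p)$ bound the ratio by $\LL_{N+1}(p)/\SS$, and in the case $P \subset \omega^{k+2}(p)\setminus\omega^k(p)$ combine Lemmas \ref{longest} and \ref{shortcorona} to get $\kappa^3\LL/(\SS(\kappa-1))$. The only difference is cosmetic: you bound $\sum_{i=0}^{k+1}\kappa^i \leq \kappa^{k+2}/(\kappa-1)$ directly, whereas the paper sums the tail $\sum_{i\geq 0}\kappa^{-i}$, and the two computations give the same constant.
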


\begin{proof}
By Corollary \ref{patchinsupertile}, there exists $N \in \N$ sufficiently large such that $P \subset \omega^{N+1}(p)$, or $P \subset \omega^{k+2}(p) \setminus \omega^k(p)$ for some $k \geq N$. Suppose that $P \subset \omega^{N+1}(p)$. By Definition \ref{radii}, we have
\begin{equation} \label{allofwork1}
0 < \frac{d_{\mathfrak{F}_p}(t,p)}{d_{\mathfrak{F}_p}(t',p)} \leq \frac{\LL_{N+1}(p)}{\SS_{N+1}(p)} \leq \frac{\LL_{N+1}(p)}{\SS}.
\end{equation}
Otherwise, $P \subset \omega^{k+2}(p) \setminus \omega^k(p)$ for some $k \geq N$. By definition of the $(k+2)$th minor branch radius of $p$ over $k$ in Definition \ref{radii}, the fact that $t \in \omega^{k+2}(p)$, and Lemmas \ref{longest} and \ref{shortcorona}, we have
\begin{equation} \label{allofwork2}
0 < \frac{d_{\mathfrak{F}_p}(t,p)}{d_{\mathfrak{F}_p}(t',p)} \leq \frac{\LL_{k+2}(p)}{\SS_{k+2,k}(p)} \leq \frac{\sum_{i=0}^{k+1} \LL \kappa^i}{\SS \kappa^{k-1}} = \frac{\LL}{\SS} \left (\kappa + \kappa^2 + \sum_{i=0}^{k-1} \kappa^{-i} \right ) \leq \frac{\kappa^3 \LL}{\SS (\kappa-1)}.
\end{equation}
Thus, combining equations \eqref{allofwork1} and \eqref{allofwork2}, we obtain
\[
0 < \frac{d_{\mathfrak{F}_p}(t,p)}{d_{\mathfrak{F}_p}(t',p)} \leq \max \left \{\frac{\LL_{N+1}(p)}{\SS},\frac{\kappa^3 \LL}{\SS (\kappa-1)} \right \}. \qedhere
\]
\end{proof}

Let $P$ be a patch in a tiling $T \in \Omega _{punc}$, $t,t' \in P$, and $t'' \in T_p \setminus \{p\}$. Making the necessary modifications to the equation immediately preceding \cite[Proposition 3.3]{Whi_rot}, we have the following formula for the representation of $e(P,t,t')$ in $\BB(H_p)$:
\begin{equation} \label{representation_delta}
\pi_p(e(P,t,t')) \delta_{t''} = \begin{cases}
\delta_{t''-(x(t')-x(t))} & \mbox{if}\ T_p-x(t'') \in U(P,t') \\
0 & \mbox{otherwise}
\end{cases}.
\end{equation}
It is important to note that if $t''-(x(t')-x(t)) = p$, then $\pi_p(e(P,t,t'))\delta_{t''} \equiv 0$. Given equation \eqref{representation_delta}, we are now in a position to prove Proposition \ref{condition_3a}.

\begin{proof}[Proof of Proposition \ref{condition_3a}]
Let $P$ be a patch in a tiling $T \in \Omega _{punc}$, $t,t' \in P$ and $t'' \in T_p \setminus \{p\}$. If $T_p - x(t'') \in U(P,t)$, let $s$ denote the tile $t''-(x(t')-x(t)) \in T_p$. We begin by calculating
\begin{align*}
[D_p,\pi_p(e(P,t,t'))] \delta_{t''} & = D_p \pi_p(e(P,t,t')) \delta_{t''}-\pi_p(e(P,t,t')) D_p \delta_{t''} \\
& = D_p \delta_s-\ln(d_{\mathfrak{F}_p}(t'',p)) \pi_p(e(P,t,t')) \delta_{t''} \\
& = \ln(d_{\mathfrak{F}_p}(s,p)) \delta_s-\ln(d_{\mathfrak{F}_p}(t'',p)) \delta_s \\
& = \ln \left[ \frac{d_{\mathfrak{F}_p}(s,p)}{d_{\mathfrak{F}_p}(t'',p)} \right] \delta_s.
\end{align*}
By equation \eqref{representation_delta}, if $T_p-x(t'') \not \in U(P,t')$ then $[D_p,\pi_p(e(P,t,t'))] \delta_{t''} \equiv 0$. Suppose then, that $T_p-x(t'') \in U(P,t')$. Thus, to prove that $[D_p,\pi_p(e(P,t,t'))] \in \BB(H_p)$, we show that
\[
\left| \ln \left[ \frac{d_{\mathfrak{F}_p}(s,p)}{d_{\mathfrak{F}_p}(t'',p)} \right] \right| \leq M,
\]
for some fixed $M \geq 0$. Using Lemma \ref{allofwork}, fix $N \in \N$ such that equation \eqref{inequality frac dist} holds. We claim
\[
M := \ln \left[ \max \left\{ \frac{\LL_{N+1}(p)}{\SS},\frac{\kappa^3 \LL}{\SS (\lambda-1)} \right\} \right],
\]
does the job. Suppose that $d_{\mathfrak{F}_p}(s,p) \geq d_{\mathfrak{F}_p}(t'',p)$. Then, by Lemma \ref{allofwork}, we have
\begin{equation} \label{eq:bound_M_1}
1 \leq \frac{d_{\mathfrak{F}_p}(s,p)}{d_{\mathfrak{F}_p}(t'',p)} \leq \max \left\{ \frac{\LL_{N+1}(p)}{\SS},\frac{\kappa^3 \LL}{\SS (\lambda-1)} \right\} \implies 0 \leq \ln \left[ \frac{d_{\mathfrak{F}_p}(s,p)}{d_{\mathfrak{F}_p}(t'',p)} \right] \leq M.
\end{equation}
Otherwise, $d_{\mathfrak{F}_p}(s,p) \leq d_{\mathfrak{F}_p}(t'',p)$ and again by Lemma \ref{allofwork}, we have
\begin{equation} \label{eq:bound_M_2}
1 \leq \frac{d_{\mathfrak{F}_p}(t'',p)}{d_{\mathfrak{F}_p}(s,p)} \leq \max \left\{ \frac{\LL_{N+1}(p)}{\SS},\frac{\lambda^3 \LL}{\SS (\lambda-1)} \right\} \implies 0 \leq \ln \left[ \frac{d_{\mathfrak{F}_p}(t'',p)}{d_{\mathfrak{F}_p}(s,p)} \right] \leq M.
\end{equation}
Thus, from equations \eqref{eq:bound_M_1} and \eqref{eq:bound_M_2}, we have
\[
\left| \ln \left[ \frac{d_{\mathfrak{F}_p}(t'',p)}{d_{\mathfrak{F}_p}(s,p)} \right] \right| = \left| \ln \left[ \frac{d_{\mathfrak{F}_p}(s,p)}{d_{\mathfrak{F}_p}(t'',p)} \right] \right| \implies 0 \leq \left| \ln \left[ \frac{d_{\mathfrak{F}_p}(s,p)}{d_{\mathfrak{F}_p}(t'',p)} \right] \right| \leq M,
\]
as claimed. Thus, for any patch $ P $ in any tiling $ T \in \Omega_{punc} $, and $ t,t' \in P $, we have
\[
[D_p,\pi_p(e(P,t,t'))] \in \BB(H_p). \qedhere
\]
\end{proof}

By Proposition \ref{condition_3a}, we now know that $(A_{punc},H_p,D_p)$ satisfies condition \eqref{spectral triple 1} of Definition \ref{spectral_triple}. For condition \eqref{spectral triple 2} of Definition \ref{spectral_triple}, we need to show that the operator $(1 + D_p^2) \inverse \in \KK (H_p)$. We show that $\exp(-\alpha D_p^2)$ is trace class for all $\alpha > 0$, and invoke the following result found in \cite[Section 7]{Con1}.

\begin{lemma}[{\cite[Section 7]{Con1}}] \label{thetatocompact}
Suppose $D : \Dom(D) \to H$ is an unbounded and self-adjoint operator on a separable Hilbert space $H$. If $\exp(-\alpha D^2)$ is trace class for all $\alpha > 0$, then $(1+D^2) \inverse \in \KK(H)$.
\end{lemma}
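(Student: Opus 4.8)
The plan is to deduce this from the spectral theorem for the self-adjoint operator $D^2$, together with the elementary fact that a norm-continuous function of a compact operator vanishing at $0$ is again compact. I would first note that only compactness of $\exp(-\alpha D^2)$ for a single $\alpha$ is actually used, so without loss of generality I fix $\alpha = 1$ and set $B := \exp(-D^2)$, a positive, self-adjoint, injective compact operator with $\|B\| \le 1$; I may also assume $H$ is infinite-dimensional, since otherwise there is nothing to prove.

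The key step is a functional-calculus identity. Since $D^2$ is a positive self-adjoint operator, $(1+D^2)\inverse$ is a bounded, everywhere-defined operator of norm at most $1$, and both $B = g(D^2)$ and $(1+D^2)\inverse = h(D^2)$ arise from applying the Borel functional calculus of $D^2$ to the functions $g(x) = e^{-x}$ and $h(x) = (1+x)\inverse$ on $[0,\infty)$. I would then observe that $h = \eta\circ g$ on $[0,\infty)$, where
\[
\eta(s) := \begin{cases} (1-\ln s)\inverse & \text{if } 0 < s \le 1,\\ 0 & \text{if } s = 0,\end{cases}
\]
is continuous on $[0,1]$ because $-\ln s \to \infty$ as $s\to 0^+$. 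The composition rule for the functional calculus of $D^2$ then gives $(1+D^2)\inverse = h(D^2) = \eta(g(D^2)) = \eta(B)$.

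It then remains to argue that $\eta(B)$ is compact. I would do this by approximating $\eta$ uniformly on $\sigma(B)\subseteq[0,1]$ by polynomials $q_n$ with $q_n(0)=0$ (Weierstrass, after subtracting the constant term): then $q_n(B)$ is a polynomial in the compact operator $B$ with no constant term, hence compact, and $q_n(B)\to\eta(B)$ in operator norm since $B$ is self-adjoint, so $\eta(B)$ is compact as a norm-limit of compact operators. An equivalent, perhaps more transparent route is to diagonalise $B$ in an orthonormal basis $\{\xi_n\}$ of eigenvectors with eigenvalues $\mu_n\downarrow 0$ (all positive, by injectivity of $B$), note that the functional calculus forces $\xi_n\in\Dom(D^2)$ with $D^2\xi_n = -(\ln\mu_n)\,\xi_n$ and $-\ln\mu_n\to+\infty$, and conclude that $(1+D^2)\inverse$ is diagonal in $\{\xi_n\}$ with eigenvalues $(1-\ln\mu_n)\inverse\to 0$, hence compact. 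Either way $(1+D^2)\inverse\in\KK(H)$.

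I do not anticipate a genuine obstacle; the only point requiring care is the bookkeeping of the Borel functional calculus for the \emph{unbounded} operator $D^2$ — specifically the composition identity $\eta(g(D^2)) = (\eta\circ g)(D^2)$ and the fact that each eigenvector $\xi_n$ lies in $\Dom(D^2)$ — but these are standard consequences of the multiplication-operator form of the spectral theorem. Morally the lemma just says that compactness of a single $\exp(-\alpha D^2)$ forces $D^2$ to have purely discrete spectrum accumulating only at $+\infty$, from which compactness of the resolvent $(1+D^2)\inverse$ is immediate.
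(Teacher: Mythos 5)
Your argument is correct. Note, though, that the paper does not actually prove this lemma: it is quoted from Connes (\cite[Section 7]{Con1}) and used as a black box, so there is no in-paper proof to compare against. Your spectral-calculus route is the standard one and is complete: writing $B=\exp(-D^2)$ and observing $(1+D^2)^{-1}=\eta(B)$ with $\eta(s)=(1-\ln s)^{-1}$, $\eta(0)=0$, continuous on $[0,1]$, reduces everything to the fact that a continuous function vanishing at $0$ applied to a compact self-adjoint operator is compact; the composition identity $\eta(g(D^2))=(\eta\circ g)(D^2)$ and the membership $\xi_n\in\Dom(D^2)$ in your diagonalisation variant are indeed routine consequences of the multiplication-operator form of the spectral theorem. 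Your opening remark is also a genuine (mild) strengthening worth keeping: you only use compactness of $\exp(-\alpha D^2)$ for a single $\alpha$, whereas the lemma as stated assumes trace class for all $\alpha>0$; this makes transparent that the hypothesis simply forces $D^2$ to have discrete spectrum accumulating only at $+\infty$, which is exactly what compactness of the resolvent means.
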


If we are able to show that $ \exp (-\alpha D_p^2) $ is trace class for all $ \alpha > 0 $, then Lemma \ref{thetatocompact} will imply that $ (A_{punc},H_p,D_p) $ is a $ \theta $-summable spectral triple, and hence, we will be able to prove part \eqref{spec_trip_p} of Theorem \ref{nposspectrip}.
Before proceeding, we need the following technical lemma.

\begin{lemma} \label{leftfield}
For each $n \in \N$, there exists a constant $c > 0$ such that $x^{\ln x} \geq c x^n$ for all $x > 0$.
\end{lemma}

\begin{proof}
Since $x>0$, the property $x^{\ln x} \geq c x^n$ is equivalent to $(\ln x)^2 -n\ln x \geq \ln c$. Now, $(\ln x)^2 -n\ln x$ has a lower bound of $-n^2/4$ at $x=e^{n/2}$. Thus, $c=e^{-n^2/4}$ is the desired constant.
\end{proof}

In the following proposition, we recall the notation given introduced in Notation \ref{notation:substitution} for the number of tiles comprising the patch $ \omega^n (p) $ for each $ n \in \N $ and $ p \in \PP $.

\begin{prop} \label{thetasum}
The operator $\exp(-\alpha D_p^2)$ is trace class for all $\alpha > 0$.
\end{prop}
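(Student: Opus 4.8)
The plan is to estimate the trace of $\exp(-\alpha D_p^2)$ directly on the canonical basis $\{\delta_t : t \in T_p \setminus \{p\}\}$, on which $D_p$ is diagonal. Since
\[
\Tr\big(\exp(-\alpha D_p^2)\big) = \sum_{t \in T_p \setminus \{p\}} \exp\!\big(-\alpha \ln(d_{\mathfrak{F}_p}(t,p))^2\big) = \sum_{t \in T_p \setminus \{p\}} d_{\mathfrak{F}_p}(t,p)^{-\alpha \ln d_{\mathfrak{F}_p}(t,p)},
\]
the whole problem reduces to controlling how many tiles $t$ have fractal distance $d_{\mathfrak{F}_p}(t,p)$ in a given range, and then showing the resulting series converges. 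The key geometric input is that the number of tiles grows at most exponentially in the combinatorial ``generation'' (Notation \ref{notation:substitution} gives $\NN_n(p) \leq \NN_{\max}^n$), whereas the fractal distance grows \emph{geometrically} in $\kappa > 1$ with generation by Lemmas \ref{fscale}, \ref{longest}, and \ref{shortcorona}.

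First I would stratify $T_p \setminus \{p\}$ by the coronae $\omega^{m}(p) \setminus \omega^{m-1}(p)$ for $m \in \N$. The $m$-th corona contains at most $\NN_m(p) \leq \NN_{\max}^m$ tiles, and for every tile $t$ in it, Lemmas \ref{longest} and \ref{shortcorona} give two-sided control
\[
\kappa^{m-2}\SS \;\leq\; \SS_{m,m-1}(p) \;\leq\; d_{\mathfrak{F}_p}(t,p) \;\leq\; \LL_m(p) \;\leq\; \sum_{i=0}^{m-1}\kappa^i \LL \;\leq\; \frac{\kappa^m \LL}{\kappa-1}.
\]
In particular $d_{\mathfrak{F}_p}(t,p) \geq \kappa^{m-2}\SS$, so once $m$ is large enough this exceeds $1$, and for such $m$ the summand $d_{\mathfrak{F}_p}(t,p)^{-\alpha \ln d_{\mathfrak{F}_p}(t,p)}$ is decreasing in $d_{\mathfrak{F}_p}(t,p)$; hence each tile in the $m$-th corona contributes at most $(\kappa^{m-2}\SS)^{-\alpha \ln(\kappa^{m-2}\SS)}$. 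Summing over the corona and then over $m$,
\[
\Tr\big(\exp(-\alpha D_p^2)\big) \;\leq\; C_0 + \sum_{m \geq m_0} \NN_{\max}^{m}\,\big(\kappa^{m-2}\SS\big)^{-\alpha \ln(\kappa^{m-2}\SS)},
\]
where $C_0$ absorbs the finitely many small-$m$ coronae and $m_0$ is chosen so that $\kappa^{m_0-2}\SS>1$.

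It remains to see the tail series converges. Write $x_m := \kappa^{m-2}\SS$; then $\NN_{\max}^m = \NN_{\max}^2 \cdot (x_m/\SS)^{\log_\kappa \NN_{\max}} = c' x_m^{\,n}$ with $n := \log_\kappa \NN_{\max}$ a fixed exponent. This is exactly the situation of Lemma \ref{leftfield}: for the integer $\lceil n \rceil + 1$ (say) there is $c>0$ with $x^{\ln x} \geq c\, x^{\lceil n\rceil + 1}$ for all $x>0$, equivalently $x^{-\alpha \ln x} \leq (c^{-1} x^{-(\lceil n\rceil+1)})^{\alpha}$ after a short manipulation; more directly, $\NN_{\max}^m (x_m)^{-\alpha \ln x_m} \leq c' (c)^{-\alpha} x_m^{\,n - \alpha(\lceil n \rceil + 1)}$, and since $x_m = \kappa^{m-2}\SS \to \infty$ geometrically with ratio $\kappa > 1$ and the exponent $n - \alpha(\lceil n \rceil + 1)$ is negative (indeed $<-\alpha < 0$ once we pick the bound in Lemma \ref{leftfield} with a large enough integer, or more carefully for \emph{every} $\alpha>0$ by choosing that integer depending on $\alpha$), the tail is dominated by a convergent geometric series. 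Therefore $\Tr(\exp(-\alpha D_p^2)) < \infty$ for every $\alpha>0$, as required.

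The main obstacle, and the point requiring care rather than cleverness, is the interaction between the two exponential rates: $\NN_{\max}^m$ grows exponentially in $m$ while $d_{\mathfrak{F}_p}(t,p)^{-\alpha \ln d_{\mathfrak{F}_p}(t,p)}$ decays only \emph{super-exponentially slowly} relative to $m$ (its logarithm is of order $-\alpha (m\ln\kappa)^2$, quadratic in $m$). Fortunately quadratic beats linear, so the decay wins for every fixed $\alpha>0$ — Lemma \ref{leftfield} is precisely the bookkeeping device that turns this heuristic into the polynomial-versus-power-of-$x$ comparison and closes the argument. One must also handle the monotonicity subtlety: the function $x \mapsto x^{-\alpha\ln x}$ is \emph{not} monotone near $x=1$ (it has a maximum at $x=1$), which is why the stratification must be cut off at $m_0$ with $\kappa^{m_0-2}\SS > 1$ and the finitely many remaining tiles lumped into the constant $C_0$.
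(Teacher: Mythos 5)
Your proof is correct and is essentially the paper's argument: the same trace computation, a corona decomposition of $T_p\setminus\{p\}$ combined with the lower bound $d_{\mathfrak{F}_p}(t,p)\geq \kappa^{m-2}\SS$ from Lemma \ref{shortcorona} and the tile count $\NN_{\max}^m$, then Lemma \ref{leftfield} with the exponent chosen depending on $\alpha$ (exactly the paper's choice of $N$ with $\NN_{\max}<\kappa^{N\alpha}$) to reduce to a convergent geometric series. The only differences are cosmetic — you use width-one coronae and invoke monotonicity of $x\mapsto x^{-\alpha\ln x}$ before applying Lemma \ref{leftfield}, whereas the paper uses coronae $\omega^{2m+1}(p)\setminus\omega^{2m-1}(p)$ and applies Lemma \ref{leftfield} to the whole sum first.
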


\begin{proof}
Fix $\alpha > 0$. For any $t \in T_p \setminus \{p\}$, we have
\[
\exp(-\alpha D_p^2) \delta_t = \exp(-\alpha (\ln(d_{\mathfrak{F}_p}(t,p)))^2) \delta_t = d_{\mathfrak{F}_p}(t,p)^{-\alpha \ln(d_{\mathfrak{F}_p}(t,p))} \delta_t.
\]
From this, we calculate
\[
\Tr(\exp(-\alpha D_p^2)) = \sum_{t \in T_p \setminus \{p\}} \langle \exp(-\alpha D_p^2) \delta_t,\delta_t \rangle
 = \sum_{t \in T_p \setminus \{p\}} (d_{\mathfrak{F}_p}(t,p)^{\ln(d_{\mathfrak{F}_p}(t,p))})^{-\alpha}.
\]
Since $\kappa > 1$ and $\alpha > 0$, we have $\kappa^{\alpha} > 1$. Fix $N \in \N$ such that $\NN_{\max} < \kappa^{N\alpha}$. By Lemma \ref{leftfield}, there exists $c > 0$ such that $x^{\ln x} \geq c x^N$ for all $x > 0$. Since $d_{\mathfrak{F}_p}(t,p) > 0$ for all $t \in T_p \setminus \{p\}$, we have
\begin{equation} \label{summationchange}
\Tr(\exp(-\alpha D_p^2)) \leq c^{-\alpha} \sum_{t \in T_p \setminus \{p\}} d_{\mathfrak{F}_p}(t,p)^{-N\alpha}.
\end{equation}
From here, let us restrict our attention to the sum in equation \eqref{summationchange}, which we rewrite as follows. For each $t \in T_p \setminus \{p\}$, we have $t \in \omega(p) \setminus \{p\}$, or $t \in \omega^{2m+1}(p) \setminus \omega^{2m-1}(p)$ for some $m \in \N$. To simplify notation, define $\omega^{2m\pm 1}(p) := \omega^{2m+1}(p) \setminus \omega^{2m-1}(p)$ for each $m \in \N$. Thus,
\begin{equation} \label{tilesummation}
\sum_{t \in T_p \setminus \{p\}} d_{\mathfrak{F}_p}(t,p)^{-N\alpha} = \sum_{t \in \omega(p) \setminus \{p\}} d_{\mathfrak{F}_p}(t,p)^{-N\alpha} + \sum_{m=1}^{\infty} \sum_{t \in \omega^{2m\pm 1}(p)} d_{\mathfrak{F}_p}(t,p)^{-N\alpha}.
\end{equation}
The first sum on the right hand side of equation \eqref{tilesummation} is finite since $\omega(p) \setminus \{p\}$ is a patch. Thus, we only concern ourselves with the second sum on the right hand side of equation \eqref{tilesummation}. For each $m \in \N$, Lemma \ref{shortcorona}, with $n = 2m-1$, implies that $d_{\mathfrak{F}_p}(t,p) \geq \kappa^{2m-2} \SS$ for any $t \in \omega^{2m\pm 1}(p)$. Thus,
\[
\sum_{m=1}^{\infty} \sum_{t \in \omega^{2m\pm 1}(p)} d_{\mathfrak{F}_p}(t,p)^{-N\alpha} \leq \sum_{m=1}^{\infty}\sum_{t \in \omega^{2m \pm 1}(p)} (\kappa^{2m-2} \SS)^{-N\alpha} = (\kappa^{-2} \SS)^{-N\alpha} \sum_{m=1}^{\infty} \sum_{t \in \omega^{2m\pm 1}(p)} \kappa^{-2mN\alpha}.
\]
For each $m \in \N$, the sum over $t \in \omega^{2m\pm 1}(p)$ no longer depends on $t$. Thus, we multiply $\kappa^{-2mN\alpha}$ by the number of tiles in $\omega^{2m\pm 1}(p)$, which is certainly smaller than $ \NN_{\max}^{2m + 1} $. Thus, we have
\[
\sum_{m=1}^{\infty} \sum_{t \in \omega^{2m\pm 1}(p)} d_{\mathfrak{F}_p}(t,p)^{-N\alpha} \leq (\kappa^{-2} \SS)^{-N\alpha} \sum_{m=1}^{\infty} \NN_{\max}^{2m+1} \kappa^{-2mN\alpha} = \NN_{\max}(\kappa^{-2} \SS)^{-N\alpha} \sum_{m=1}^{\infty} \left (\frac{\NN_{\max}^2}{\kappa^{2N\alpha}}\right )^m.
\]
By our choice of $N$, we have $\NN_{\max}^2 < \kappa^{2N\alpha}$, so we have a convergent geometric series. Tracing back to equation \eqref{summationchange}, we have $\Tr(\exp(-\alpha D_p^2)) < \infty$. Thus, $\exp(-\alpha D_p^2)$ is trace class, as required.
\end{proof}

We are now in a position to prove part \eqref{spec_trip_p} of Theorem \ref{nposspectrip}.

\begin{proof}[Proof of Theorem \ref{nposspectrip} \eqref{spec_trip_p}]
The operator $D_p$ is an unbounded and self-adjoint operator on $H_p$, which satisfies condition \eqref{spectral triple 1} of Definition \ref{spectral_triple}, by Proposition \ref{condition_3a}. Moreover, since the operator $\exp(-\alpha D_p^2)$ is trace class for all $\alpha > 0$, by Proposition \ref{thetasum}, Lemma \ref{thetatocompact} implies that $(1+D_p^2)^{-1} \in \KK (H_p)$, which is condition \eqref{spectral triple 2} of Definition \ref{spectral_triple}. Thus, $(A _{punc},H_p,D_p)$ is a $\theta$-summable spectral triple.
\end{proof}

We now turn our attention to part \eqref{spec_trip_P} of Theorem \ref{nposspectrip}. By part \eqref{spec_trip_p} of Theorem \ref{nposspectrip}, we have a collection of spectral triples $(A _{punc},H_p,D_p)$ for each $p \in \PP$. Given $ \sigma : \PP \to \{-1,1\} $, define
\[
H:=\bigoplus_{p \in \PP} H_p, \qquad \pi:=\bigoplus_{p \in \PP} \pi_p,\,\, \text{ and } \qquad D_{\sigma}:= \bigoplus_{p \in \PP} \sigma(p) D_p.
\]
The map $\pi : A_{punc} \to \BB (H)$ is a non-degenerate faithful representation since $\pi_p$ is a non-degenerate faithful representation for each $p \in \PP$. Thus, to show that $(A_{punc},H,D _{\sigma})$ is a spectral triple, we check $D _{\sigma}$ is an unbounded, self-adjoint operator satisfying conditions \eqref{spectral triple 1} and \eqref{spectral triple 2} of Definition \ref{spectral_triple}.

\begin{lemma}
The operator $D _{\sigma}$ is an unbounded, self-adjoint operator on $H$ with
\begin{equation} \label{domD}
\Dom(D _{\sigma}) = \bigoplus_{p \in \PP} \Dom(D_p).
\end{equation}
\end{lemma}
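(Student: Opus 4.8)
The plan is to verify the three claims — that $D_\sigma$ is self-adjoint, that it is unbounded, and that its domain is the direct sum of the domains of the $\sigma(p)D_p$ — by reducing each to the corresponding property of the individual operators $D_p$, already established in the discussion preceding Proposition \ref{condition_3a}. The key observation is that $D_\sigma = \bigoplus_{p\in\PP}\sigma(p)D_p$ is a finite direct sum (since $\PP$ is a finite prototile set), so no subtlety about closability of infinite direct sums arises, and each summand $\sigma(p)D_p$ is, up to the scalar $\sigma(p)\in\{-1,1\}$, just the self-adjoint operator $D_p$; multiplying a self-adjoint operator by $\pm 1$ leaves it self-adjoint with the same domain.

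First I would record the domain formula \eqref{domD}: by definition of a (finite) direct sum of unbounded operators, $\Dom(D_\sigma) = \{\xi = (\xi_p)_{p\in\PP} \in H : \xi_p \in \Dom(\sigma(p)D_p) \text{ for all } p,\ \sum_p \|\sigma(p)D_p\xi_p\|^2 < \infty\}$, and since $\Dom(\sigma(p)D_p) = \Dom(D_p)$ and the finiteness condition is automatic for a finite sum, this is exactly $\bigoplus_{p\in\PP}\Dom(D_p)$, which is dense in $H$ because each $\Dom(D_p) = \newspan\{\delta_t : t \in T_p\setminus\{p\}\}$ is dense in $H_p$. Next, for self-adjointness, I would compute $D_\sigma^* = \bigl(\bigoplus_p \sigma(p)D_p\bigr)^* = \bigoplus_p (\sigma(p)D_p)^* = \bigoplus_p \sigma(p)D_p^* = \bigoplus_p \sigma(p)D_p = D_\sigma$, using that the adjoint of a finite direct sum is the direct sum of the adjoints, that $(\sigma(p)D_p)^* = \overline{\sigma(p)}D_p^* = \sigma(p)D_p^*$ since $\sigma(p)$ is real, and that $D_p^* = D_p$ by the self-adjointness of $D_p$ proved earlier; alternatively one can simply repeat the Riesz-representation argument used for $D_p$ coordinatewise. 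Finally, unboundedness of $D_\sigma$ follows because its restriction to the summand $H_p \subset H$ agrees with $\pm D_p$, and $D_p$ was already shown to be unbounded: for any $n\in\N$ one can find $t'\in T_p\setminus\{p\}$ with $\ln(d_{\mathfrak{F}_p}(t',p)) > n$, whence $\|D_\sigma \delta_{t'}\| = |\sigma(p)|\,\ln(d_{\mathfrak{F}_p}(t',p)) > n$.

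There is no real obstacle here; the statement is essentially bookkeeping about finite direct sums of self-adjoint operators, and every ingredient — self-adjointness, unboundedness, and density of $\Dom(D_p)$ — has already been established for the individual $D_p$. The only point requiring a moment's care is to make sure the direct-sum domain is correctly identified and that one invokes the (standard) fact that a finite orthogonal direct sum of self-adjoint operators is self-adjoint on the direct-sum domain; since $\PP$ is finite this is entirely routine.
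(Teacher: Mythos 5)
Your proposal is correct and follows essentially the same route as the paper: the paper likewise reduces everything to the individual $D_p$, citing Pedersen's Lemma 5.3.7 for self-adjointness of the direct sum and the domain identity, and deducing unboundedness from unboundedness of each $D_p$, whereas you simply verify the (finite) direct-sum facts by hand. One small inaccuracy worth fixing: $\Dom(D_p)$ is not $\newspan\{\delta_t : t \in T_p\setminus\{p\}\}$ but the maximal domain $\{\eta \in H_p : \sum_{t} \ln(d_{\mathfrak{F}_p}(t,p))^2|\eta(t)|^2 < \infty\}$ (on the span alone $D_p$ would only be essentially self-adjoint); this does not affect your argument, since you use the span only to get density, which still holds.
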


\begin{proof}
Since $\| D _{\sigma} \| = \max\{ \| D_p \| : p \in \PP \}$ and each $D_p$ is unbounded on $H_p$, $D _{\sigma}$ is an unbounded operator on $H$. Now, \cite[Lemma 5.3.7]{Pedersen.book} implies that $D _{\sigma}$ is self-adjoint, and that \eqref{domD} holds.
\end{proof}

\begin{lemma} \label{cond_3a_D}
Let $P$ be a patch in a tiling $T \in \phull$. Then, for all $t,t' \in P$,
\[
[D _{\sigma},\pi(e(P,t,t'))] \in \BB(H).
\]
\end{lemma}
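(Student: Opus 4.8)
The plan is to reduce the claim for $D_\sigma$ on $H = \bigoplus_{p \in \PP} H_p$ to the corresponding statement for each $D_p$ on $H_p$, which is already established in Proposition \ref{condition_3a}. Since $\pi = \bigoplus_{p \in \PP} \pi_p$ and $D_\sigma = \bigoplus_{p \in \PP} \sigma(p) D_p$ are both defined diagonally with respect to the direct sum decomposition, the commutator $[D_\sigma, \pi(e(P,t,t'))]$ should decompose as a direct sum of the operators $\sigma(p)[D_p, \pi_p(e(P,t,t'))]$ over $p \in \PP$. The key algebraic point is that a diagonal operator commutes through the direct sum in the obvious way, so for $\xi = (\xi_p)_{p \in \PP} \in \Dom(D_\sigma) = \bigoplus_{p \in \PP} \Dom(D_p)$ one has
\[
[D_\sigma, \pi(e(P,t,t'))]\xi = \bigl( \sigma(p)\, [D_p, \pi_p(e(P,t,t'))]\, \xi_p \bigr)_{p \in \PP}.
\]

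First I would verify that $\pi(e(P,t,t'))$ preserves $\Dom(D_\sigma)$: this follows because $\pi_p(e(P,t,t'))$ maps basis vectors to basis vectors (or to zero) by equation \eqref{representation_delta}, hence preserves $\Dom(D_p) = \newspan\{\delta_t\}^{-}$ in the appropriate sense, and the domain of the direct sum is the algebraic-plus-$\ell^2$ direct sum of the $\Dom(D_p)$. Second, I would write out the commutator componentwise and observe that on the $p$th summand it equals $\sigma(p)[D_p, \pi_p(e(P,t,t'))]$. Third, I would invoke Proposition \ref{condition_3a}, which gives that $[D_p, \pi_p(e(P,t,t'))] \in \BB(H_p)$ with operator norm bounded by $M_p := \ln\bigl[\max\{\LL_{N_p+1}(p)/\SS,\ \kappa^3 \LL / (\SS(\kappa-1))\}\bigr]$ for a suitable $N_p \in \N$ depending on $p$ and $P$. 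Since $\PP$ is finite, $M := \max\{M_p : p \in \PP\}$ is a finite constant, and $|\sigma(p)| = 1$, so the direct sum operator $\bigoplus_{p \in \PP} \sigma(p)[D_p, \pi_p(e(P,t,t'))]$ is bounded with norm at most $M$. Therefore $[D_\sigma, \pi(e(P,t,t'))]$ extends to a bounded operator on $H$, as required.

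I do not anticipate a genuine obstacle here; the only point requiring a little care is the domain bookkeeping for unbounded operators on an infinite direct sum — specifically, checking that the commutator, initially defined only on $\Dom(D_\sigma)$, is bounded there and hence extends uniquely to all of $H$, and that this extension genuinely is the direct sum of the bounded pieces. This is routine given \eqref{domD} from the preceding lemma and the uniform bound $M$ coming from finiteness of $\PP$. The argument is short, so I would present it directly rather than isolating further sublemmas.
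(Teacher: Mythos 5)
Your argument is correct and follows essentially the same route as the paper: write the commutator diagonally as $\bigoplus_{p \in \PP} \sigma(p)[D_p,\pi_p(e(P,t,t'))]$ and invoke Proposition \ref{condition_3a} together with finiteness of $\PP$ to conclude boundedness. Your extra remarks on domain preservation and the uniform bound $M$ only make explicit what the paper leaves implicit.
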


\begin{proof}
For each $ p \in \PP $, we have $ [D_p,\pi_p (e (P,t,t'))] \in B (H_p) $. Thus,
\begin{align*}
[D _{\sigma},\pi(e(P,t,t'))] & = D _{\sigma} \pi(e(P,t,t')) - \pi(e(P,t,t')) D _{\sigma} \\
& = \bigoplus_{p \in \PP}\ \sigma(p) D_p \pi_p(e(P,t,t')) - \sigma(p) \pi_p(e(P,t,t')) D_p \\
& = \bigoplus_{p \in \PP}\ \sigma(p) [D_p,\pi_p(e(P,t,t'))] \in \BB(H). \qedhere
\end{align*}
\end{proof}

\begin{lemma} \label{trace_class}
The operator $\exp(-\alpha D _{\sigma}^2)$ is trace class for all $\alpha > 0$.
\end{lemma}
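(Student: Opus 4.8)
The plan is to reduce the statement directly to Proposition \ref{thetasum} by exploiting the block-diagonal structure of $D_\sigma$. First I would observe that since $\sigma(p) \in \{-1,1\}$, we have $\sigma(p)^2 = 1$ for every $p \in \PP$, so that on the domain $\Dom(D_\sigma) = \bigoplus_{p \in \PP} \Dom(D_p)$ described in equation \eqref{domD} we have
\[
D_\sigma^2 = \bigoplus_{p \in \PP}\ \big(\sigma(p) D_p\big)^2 = \bigoplus_{p \in \PP}\ D_p^2.
\]
By functional calculus for the self-adjoint operator $D_\sigma$, this yields $\exp(-\alpha D_\sigma^2) = \bigoplus_{p \in \PP}\ \exp(-\alpha D_p^2)$ as a bounded operator on $H = \bigoplus_{p \in \PP} H_p$.

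Next I would compute the trace. Since the prototile set $\PP$ is finite, the decomposition $H = \bigoplus_{p \in \PP} H_p$ is a finite direct sum, and the trace of a positive block-diagonal operator over a finite index set is the (finite) sum of the traces of its blocks:
\[
\Tr\big(\exp(-\alpha D_\sigma^2)\big) = \sum_{p \in \PP}\ \Tr\big(\exp(-\alpha D_p^2)\big).
\]
By Proposition \ref{thetasum}, each term $\Tr(\exp(-\alpha D_p^2))$ is finite, and a finite sum of finite numbers is finite, so $\exp(-\alpha D_\sigma^2)$ is trace class. As $\alpha > 0$ was arbitrary, this proves the lemma.

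I do not expect any genuine obstacle here; the only point worth flagging is that finiteness of $\PP$ is essential, since it is what lets us pass from the summability of each block to summability of the whole without any uniform control over the blocks. Once parts \eqref{spec_trip_p} of Theorem \ref{nposspectrip}, Lemma \ref{cond_3a_D}, and this lemma are in hand, part \eqref{spec_trip_P} of Theorem \ref{nposspectrip} follows by applying Lemma \ref{thetatocompact} to $D_\sigma$ to obtain $(1+D_\sigma^2)^{-1} \in \KK(H)$, exactly as in the proof of part \eqref{spec_trip_p}.
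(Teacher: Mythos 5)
Your proposal is correct and follows essentially the same route as the paper: the paper likewise uses $D_\sigma^2 = \oplus_{p\in\PP} D_p^2$ (via functional calculus) and reduces to $\Tr(\exp(-\alpha D_\sigma^2)) = \sum_{p\in\PP}\Tr(\exp(-\alpha D_p^2)) < \infty$ by Proposition \ref{thetasum} and finiteness of $\PP$. Your explicit remark that $\sigma(p)^2 = 1$ is a helpful (if minor) clarification of what the paper leaves implicit.
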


\begin{proof}
Proposition \ref{thetasum} implies that $\Tr(\exp(-\alpha D_p^2)) < \infty$ for each $\alpha > 0$ and $p \in \PP$. Fix $\alpha > 0$. Then, by the holomorphic functional calculus we obtain
\[
\Tr(\exp(-\alpha D _{\sigma}^2)) = \Tr(\oplus_{p \in \PP} \exp(-\alpha D_p^2)) = \sum_{p \in \PP} \Tr(\exp(-\alpha D_p^2)) < \infty.
\]
Since $\alpha > 0$ was arbitrary, the operator $\exp(-\alpha D _{\sigma}^2)$ is trace class for all $\alpha > 0$.
\end{proof}

\begin{proof}[Proof of Theorem \ref{nposspectrip} \eqref{spec_trip_P}]
The operator $D _{\sigma}$ is an unbounded and self-adjoint operator on $H$, which satisfies condition \eqref{spectral triple 1} of Definition \ref{spectral_triple}, by Lemma \ref{cond_3a_D}. Moreover, since the operator $\exp(-\alpha D _{\sigma}^2)$ is trace class for all $\alpha > 0$, by Lemma \ref{trace_class}, Lemma \ref{thetatocompact} implies that $(1+D _{\sigma}^2)^{-1} \in \KK (H)$, which is condition \eqref{spectral triple 2} of Definition \ref{spectral_triple}. Thus, $(A _{punc},H,D _{\sigma})$ is a $\theta$-summable spectral triple.
\end{proof}

\begin{example}
Recall the fractal trees constructed for the Penrose tiling in Example \ref{eg:fractaltree}. Theorem \ref{nposspectrip} implies that we have a spectral triple $(A_{punc},H,D_{\sigma})$ on $A_{punc}$ for each function $\sigma:\PP \to \{-1,1\}$.
\end{example}

We conclude the paper with two open questions:
\begin{enumerate}
\item Suppose $(A_{punc},H,D_{\sigma})$ is a spectral triple on $A_{punc}$ associated with a collection of fractal trees $\{\mathfrak{F}_p \mid p \in \PP\}$, as in Theorem \ref{nposspectrip} \eqref{spec_trip_P}. Under what conditions will $(A_{punc},H,D_{\sigma})$ be a \emph{quantum metric space} in the sense of Rieffel \cite{Rie}? If $(A_{punc},H,D_{\sigma})$ is a quantum metric space, then when does this property extend to the crossed product $A_{punc} \rtimes \Z$? We note that $A_{punc} \rtimes \Z$ is a Ruelle algebra, as defined by Putnam in \cite{Put1}. The paper of Bellissard, Marcolli, and Reihani \cite{BMR} is likely to factor into the resolution of this question.

\item Suppose $T$ is tiling satisfying the hypotheses of \cite[Theorem 6.6]{FWW} (see also Theorem \ref{thm6.6-FWW}). We conjecture that there exists a collection of fractal trees $\{\mathfrak{F}_p \mid p \in \PP\}$ such that the $K$-homology group $KK^1(A_{punc},\C)$ is generated by $\{[D_{\sigma}(1+D_{\sigma}^2)^{-1/2}] \mid \, \sigma:\PP \to \{-1,1\}\}$, where each such $\sigma$ defines a spectral triple $(A_{punc},H,D_{\sigma})$ as in Theorem \ref{nposspectrip} \eqref{spec_trip_P}.
\end{enumerate}

\end{document}